\numberwithin{equation}{section}
\numberwithin{figure}{section}
\theoremstyle{plain}
\newtheorem{thm}{\protect\theoremname}[section]
\theoremstyle{plain}
\newtheorem{conjecture}[thm]{\protect\conjecturename}
\theoremstyle{definition}
\newtheorem{example}[thm]{\protect\examplename}
\theoremstyle{remark}
\newtheorem{rem}[thm]{\protect\remarkname}
\theoremstyle{plain}
\newtheorem{lem}[thm]{\protect\lemmaname}
\theoremstyle{plain}
\newtheorem{prop}[thm]{\protect\propositionname}
\def\makebbb#1{
    \expandafter\gdef\csname#1\endcsname{
        \ensuremath{\Bbb{#1}}}
}\makebbb{R}\makebbb{N}\makebbb{Z}\makebbb{C}\makebbb{H}\makebbb{E}\makebbb{H}\makebbb{P}\makebbb{B}\makebbb{Q}\makebbb{E}\makebbb{E}
\providecommand{\conjecturename}{Conjecture}
\providecommand{\examplename}{Example}
\providecommand{\lemmaname}{Lemma}
\providecommand{\propositionname}{Proposition}
\providecommand{\remarkname}{Remark}
\providecommand{\theoremname}{Theorem}
\begin{document}
\title{Kähler-Einstein metrics and Archimedean zeta functions }
\author{Robert J. Berman}
\begin{abstract}
While the existence of a unique Kähler-Einstein metric on a canonically
polarized manifold $X$ was established by Aubin and Yau already in
the 70s there are only a few explicit formulas available. In previous
work a probabilistic construction of the Kähler-Einstein metric was
introduced - involving canonical random point processes on $X$ -
which yields canonical approximations of the Kähler-Einstein metric,
expressed as explicit period integrals over a large number of products
of $X.$ Here it is shown that the conjectural extension to the case
when $X$ is a Fano variety suggests a zero-free property of the Archimedean
zeta functions defined by the partition functions of the probabilistic
model. A weaker zero-free property is also shown to be relevant for
the Calabi-Yau equation. The convergence in the case of log Fano curves
is settled, exploiting relations to the complex Selberg integral in
the orbifold case. Some intriguing relations to the zero-free property
of the local automorphic L-functions appearing in the Langlands program
and arithmetic geometry are also pointed out. These relations also
suggest a natural $p$-adic extension of the probabilistic approach.
\end{abstract}

\subjclass[2000]{53C55, 60G55, 11S40, 14G40}
\keywords{Kähler-Einstein metric, Fano variety, random point process, Langlands
L-functions, Arakelov geometry }
\address{Robert J. Berman, Mathematical Sciences, Chalmers University of Technology
and the University of Gothenburg, SE-412 96 Göteborg, Sweden}
\email{robertb@chalmers.se}
\maketitle

\section{Introduction}

A metric $\omega$ on a compact complex manifold $X$ is said to be
\emph{Kähler-Einstein} if it has constant Ricci curvature:
\[
\text{Ric \ensuremath{\omega}=\ensuremath{-\beta}\ensuremath{\omega}}
\]
for some constant $\beta$ and $\omega$ is Kähler (i.e. parallel
translation preserves the complex structure on $X).$ Such metrics
play a prominent in current complex differential geometry and the
study of complex algebraic varieties, in particular in the context
of the Yau-Tian-Donaldson conjecture \cite{do3} and the Minimal Model
Program in birational algebraic geometry \cite{ko}. In \cite{berm8,berm8 comma 5}
a probabilistic construction of Kähler-Einstein metrics with negative
Ricci curvature on a complex projective algebraic variety $X$ was
introduced, where the Kähler-Einstein metric emerges from a canonical
random point process on $X.$ The random point process is defined
in terms of purely algebro-geometric data. Accordingly, one virtue
of this approach is that it generates new links between differential
geometry on the one hand and algebraic-geometry on the other. In the
present work it is, in particular, shown that the conjectural extension
to Kähler-Einstein metrics with positive Ricci curvature suggests
a zero-free property of the Archimedean zeta functions defined by
the partition functions of the probabilistic model. The particular
case of Kähler-Einstein metrics with conical singularities on the
Riemann sphere is settled, which from  the algebro-geometric perspective
corresponds to the case of log Fano curves. 

We start by providing some background on Kähler-Einstein metrics and
recapitulating the probabilistic approach to Kähler-Einstein metrics;
the reader is referred to the survey \cite{berm11} for more  background
and \cite{ber12} for relations to the Yau-Tian-Donaldson conjecture.
See also \cite{b-c-p} for connections to quantum gravity in the context
of the AdS/CFT correspondence and \cite{berm11b,du} for connections
to polynomial approximation theory and pluripotential theory in $\C^{n}.$ 

\subsection{Kähler-Einstein metrics}

The existence of a Kähler-Einstein metric on $X$ implies that the\emph{
canonical line bundle $K_{X}$} of $X$ (i.e. the top exterior power
of the cotangent bundle of $X)$ has a definite sign: 
\begin{equation}
\text{sign}(K_{X})=\text{sign}(\beta)\label{eq:sing of K X}
\end{equation}
 We will be using the standard terminology of positivity in complex
geometry: a line bundle $L$ is said to be \emph{positive,} $L>0,$
if it is ample and \emph{negative,} $L<0,$ if its dual is positive.
In analytic terms, $L>0$ iff $L$ carries some Hermitian metric with
strictly positive curvature. The standard additive notation for tensor
products of line bundles will be adopted. Accordingly, the dual of
$L$ is expressed as $-L.$ We will focus on the cases when $\beta\neq0.$
Then $X$ is automatically a complex projective algebraic manifold
and after a rescaling of the metric we may as well assume that $\beta=\pm1.$
For example, in the case when $X$ is a hypersurface in $\P^{n+1},$
cut out by a homogeneous polynomial of degree $d,$ 
\[
K_{X}>0\iff d>n+2,\,\,\,\,\,\,\,-K_{X}>0\iff d<n+2.
\]
In the case when $K_{X}>0$ the existence of a Kähler-Einstein metric
was established in the late seventies \cite{au,y}. The opposite case
$-K_{X}>0$ is the subject of the Yau-Tian-Donaldson conjecture, which
was settled only recently (see the survey \cite{do3}). However, these
are abstract existence results and there are very few explicit formulas
for Kähler-Einstein metrics on complex algebraic varieties available.
For example, even in the simplest case when $K_{X}>0$ and $X$ is
complex curve, $n=1,$ finding an explicit formula for the Kähler-Einstein
metric is equivalent to finding an explicit uniformization map from
the curve $X$ to the quotient $\H/G$ of the upper half-plane by
a discrete subgroup $G\subset SL(2,\R).$ This has only been achieved
for very special curves (such as the Klein quartic and Fermat curves),
using techniques originating in the classical works by Weierstrass,
Riemann, Fuchs, Schwartz, Klein, Poincaré,... Thus one virtue of the
probabilistic approach is that it yields canonical approximations
of the Kähler-Einstein metric on $X,$ expressed as essentially explicit
period type integrals formulas (see formula \ref{eq:def of omega k KX pos intro}
below). These are reminiscent of the aforementioned few explicit formulas
for Kähler-Einstein metrics, involving hypergeometric integrals (see
\cite[Section 2.1]{berm11}).

\subsection{The probabilistic approach}

First recall that that, in the case when $\beta\neq0,$ a Kähler-Einstein
metric $\omega_{KE}$ on $X$ can be readily recovered from its (normalized)
volume form $dV_{KE}:$
\[
\omega_{KE}=\frac{1}{\beta}\frac{i}{2\pi}\partial\bar{\partial}\log dV_{KE},
\]
where we have identified the volume form $dV$ with its local density,
defined with respect to a choice of local holomorphic coordinates
$z.$ The strategy of the probabilistic approach is to construct the
normalized volume form $dV_{KE}$ by a canonical sampling procedure
on $X.$ In other words, after constructing a canonical symmetric
probability measure $\mu^{(N)}$ on $X^{N}$ the goal is to show that
the corresponding \emph{empirical measure }
\[
\delta_{N}:=\frac{1}{N}\sum_{i=1}^{N}\delta_{x_{i}},
\]
 viewed as a random discrete measure on $X,$ converges in probability
as $N\rightarrow\infty,$ to the volume form $dV_{KE}$ of the Kähler-Einstein
metric $\omega_{KE}.$

\subsubsection{The case $\beta>0$}

When $K_{X}>0$ the canonical probability measure $\mu^{(N)}$ on
$X^{N},$ introduced in \cite{berm8}, is defined for a specific subsequence
of integers $N_{k}$ tending to infinity, the \emph{plurigenera }of
$X:$ 
\[
N_{k}:=\dim H^{0}(X,kK_{X}),
\]
 where $H^{0}(X,kK_{X})$ denotes the complex vector space of all
global holomorphic sections $s^{(k)}$ of the $k$ th tensor power
of the canonical line bundle $K_{X}\rightarrow X$ (called pluricanonical
forms). The assumption that $K_{X}>0$ ensures that $N_{k}\rightarrow\infty$,
as $k\rightarrow\infty.$ In terms of local holomorphic coordinates
$z\in\C^{n}$ on $X,$ a section $s^{(k)}$ of $kK_{X}\rightarrow X$
may be represented by local holomorphic functions $s^{(k)}$ on $X,$
such that $|s^{(k)}|^{2/k}$ transforms as a density on $X,$ i.e.
defines a measure on $X.$ The canonical symmetric probability measure
$\mu^{(N_{k})}$ on $X^{N_{k}}$ is concretely defined by 
\begin{equation}
\mu^{(N_{k})}:=\frac{1}{\mathcal{Z}_{N_{k}}}\left|\det S^{(k)}\right|^{2/k},\,\,\,\,\mathcal{Z}_{N_{k}}:=\int_{X^{N_{k}}}\left|\det S^{(k)}\right|^{2/k}\label{eq:canon prob measure intro}
\end{equation}
 where $\det S^{(k)}$ is the holomorphic section of the canonical
line bundle $(kK_{X^{N_{k}}})$ over $X^{N_{k}}$, defined by the
Slater determinant
\begin{equation}
(\det S^{(k)})(x_{1},x_{2},...,x_{N_{k}}):=\det(s_{i}^{(k)}(x_{j})),\label{eq:slater determinant}
\end{equation}
 in terms of a given basis $s_{i}^{(k)}$ in $H^{0}(X,kK_{X}).$ Under
a change of bases the section $\det S^{(k)}$ only changes by a multiplicative
complex constant (the determinant of the change of bases matrix on
$H^{0}(X,kK_{X})$) and so does the normalizing constant $\mathcal{Z}_{N_{k}}.$
As a result $\mu^{(N_{k})}$ is indeed canonical, i.e. independent
of the choice of bases. Moreover, it is completely encoded by algebro-geometric
data in the following sense: realizing $X$ as projective algebraic
subvariety the section $\det S^{(k)}$ can be identified with a homogeneous
polynomial, determined by the coordinate ring of $X$ (or more precisely,
the degree $k$ component of the canonical ring of $X).$

The following convergence result was shown in \cite{berm8}:
\begin{thm}
\label{thm:ke intro}Let $X$ be a compact complex manifold with positive
canonical line bundle $K_{X}.$ Then the empirical measures $\delta_{N_{k}}$
of the corresponding canonical random point processes on $X$ converge
in probability, as $N_{k}\rightarrow\infty,$ towards the normalized
volume form $dV_{KE}$ of the unique Kähler-Einstein metric $\omega_{KE}$
on $X.$ 
\end{thm}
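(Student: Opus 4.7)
My plan is to establish a large deviation principle (LDP) for the empirical measures $\delta_{N_k}$ under $\mu^{(N_k)}$ with a rate functional $F$ on probability measures whose unique minimizer is the K\"ahler-Einstein volume form $dV_{KE}$; convergence in probability then follows from the LDP upper bound combined with uniqueness of the minimizer.

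First I would reformulate the canonical probability measure as a Gibbs measure at inverse temperature $k$. Fix a smooth reference volume form $dV_0$ on $X$, inducing a reference Hermitian metric on $K_X$; trivializing $|\det S^{(k)}|^{2/k}$ against $dV_0^{\otimes N_k}$ presents $\mu^{(N_k)}$ in the form
\[
\mu^{(N_k)} = \frac{1}{\mathcal{Z}_{N_k}} e^{-kN_{k}E^{(k)}(x_1,\ldots,x_{N_k})}\, dV_0^{\otimes N_k}
\]
for an explicit symmetric Hamiltonian $E^{(k)}$ extracted from the Bergman kernel attached to the basis $s_i^{(k)}$. The crucial analytic input is then to show that, viewed as a functional on empirical measures, $E^{(k)}$ $\Gamma$-converges as $k\to\infty$ to the Mabuchi K-energy (equivalently, the Ding functional) restricted to volume forms $\mu = c\,e^{-\phi}\,dV_0$ with $\phi$ a K\"ahler potential. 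I would prove this identification using the Bergman kernel asymptotics of Catlin-Zelditch-Tian together with $L^2$-extension theorems of Ohsawa-Takegoshi-Siu type for the upper inequality, and a near-Fekete trial-configuration argument for the matching lower one.

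With the $\Gamma$-convergence in hand, the next step is to upgrade it to a genuine LDP via the partition-function asymptotic
\[
\lim_{k\to\infty}\frac{1}{kN_k}\log\mathcal{Z}_{N_k}=-\inf_{\mu}F(\mu),
\]
after which a standard mean-field Gibbs argument (Ellis-G\"artner exponential tilt together with exponential tightness on the space of probability measures on $X$) produces exponential concentration of $\delta_{N_k}$ on the minimizing set of $F$. Under the standing hypothesis $K_X>0$, Berndtsson's convexity of the Mabuchi functional along weak geodesics, combined with the Aubin-Yau existence theorem, forces the minimizer of $F$ to be unique and equal to $dV_{KE}$. The desired convergence in probability then follows.

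The hard part will be the sharp upper bound on $\mathcal{Z}_{N_k}$, equivalently the LDP upper bound. Crude submean-value inequalities for plurisubharmonic functions give a coarse bound, but matching the Fekete-based lower bound requires one to show that the pluripotential $\Gamma$-limit of $E^{(k)}$ genuinely equals the Mabuchi K-energy, and not merely its lower semicontinuous envelope. This forces the use of Demailly regularization, uniform Moser-Trudinger type inequalities in the Bergman regime, and the coercivity afforded by $K_X>0$; it is where essentially all the pluripotential-theoretic content of the proof will be concentrated.
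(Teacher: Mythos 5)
Your overall architecture -- rewrite $\mu^{(N_k)}$ as a Gibbs measure, prove a $\Gamma$-convergence statement for the Hamiltonian, and deduce an LDP whose rate functional has $dV_{KE}$ as its unique minimizer -- is the strategy behind the paper's proof (which invokes the LDP of \cite{berm8}; cf. Theorem \ref{thm:beta pos} and the outline in Section \ref{subsec:The-case beta pos}). But two of your identifications are off in ways that matter. First, the $\Gamma$-limit of the correctly normalized Hamiltonian $E^{(N)}=-\frac{1}{kN}\log\left\Vert \det S^{(k)}\right\Vert ^{2}$ is the \emph{pluricomplex energy} $E(\mu)$ of \cite{bbgz}, not the Mabuchi K-energy; the Mabuchi functional only arises as the full rate functional $F_{1}(\mu)=E(\mu)+\text{Ent}(\mu)$, i.e. after the entropy contributed by the factor $dV^{\otimes N}$ has been added. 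Relatedly, your speed is wrong: the exponent in the Gibbs measure is $N_{k}E^{(N)}$, both energy and entropy fluctuate at scale $N_{k}$, and the LDP holds at speed $N_{k}$ with $\lim-\frac{1}{N_{k}}\log\mathcal{Z}_{N_{k}}=\inf F_{1}$. At your speed $kN_{k}$ the rate would be trivial (the entropy, and indeed the energy in your normalization, becomes invisible), and it is precisely the balance of energy and entropy at the \emph{same} speed $N$ that singles out $dV_{KE}$ rather than an equilibrium-measure type limit; conversely, if one took your claimed $\Gamma$-limit (Mabuchi) and then added Sanov entropy via a tilting argument, the entropy would be double-counted and the minimizer would no longer be $dV_{KE}$.

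Second, the genuinely hard step is not where you place it. For $\beta=1$ the upper bound on $\mathcal{Z}_{N_{k}}$ and the LDP upper bound follow comparatively softly from the liminf half of the $\Gamma$-convergence (in the paper this input is the convergence and differentiability of weighted transfinite diameters of \cite{b-b}, close in spirit to the Bergman-kernel/Fekete route you sketch) combined with Sanov-type bounds; Demailly regularization, Moser--Trudinger inequalities and coercivity are not the issue here. What does \emph{not} follow from a ``standard mean-field Gibbs argument'' is the LDP lower bound, i.e. making Boltzmann's entropy heuristic rigorous for a Hamiltonian that is only lower semicontinuous (it equals $+\infty$ on the zero divisor of $\det S^{(k)}$), so that G\"artner--Ellis tilting and continuity-based mean-field arguments do not apply. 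In \cite{berm8} this is handled by an effective submean-value inequality for the density of the Gibbs measure on the $N$-fold symmetric product of $X$, viewed as a Riemannian orbifold (leveraging Li--Schoen type results in geometric analysis); nothing in your proposal plays that role, and this is a genuine gap. A minor point: uniqueness of the minimizer for $\beta>0$ needs only convexity of $E$, strict convexity of the entropy and Aubin--Yau; Berndtsson convexity along geodesics is the tool for the Fano case, not here.
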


In fact, the proof (discussed in Section \ref{subsec:The-case beta pos}
below) shows that the convergence holds at an exponential rate, in
the sense of large deviation theory: for any given $\epsilon>0$ there
exists a positive constant $C_{\epsilon}$ such that 
\[
\text{Prob}\left(d\left(\frac{1}{N}\sum_{i=1}^{N}\delta_{x_{i}},dV_{KE}\right)>\epsilon\right)\leq C_{\epsilon}e^{-N\epsilon},
\]
 where $d$ denotes any metric on the space $\mathcal{P}(X)$ of probability
measures on $X$ compatible with the weak topology. The convergence
in probability implies, in particular, that the measures $dV_{k}$
on $X,$ defined by the expectations $\E(\delta_{N_{k}})$ of the
empirical measure $\delta_{N_{k}}$ converge towards $dV_{KE}$ in
the weak topology of measures on $X:$ 
\[
dV_{k}:=\E(\delta_{N_{k}})=\int_{X^{N_{k}-1}}\mu^{(N_{k})}\rightarrow dV_{KE},\,\,\,k\rightarrow\infty
\]
For $k$ sufficiently large (ensuring that $kK_{X}$ is very ample)
the measures $dV_{k}$ are, in fact, volume forms on $X$ and induce
a sequence of canonical Kähler metrics $\omega_{k}$ on $X,$ expressed
in terms of period type integrals: 

\begin{equation}
\omega_{k}:=\frac{i}{2\pi}\partial\bar{\partial}\log dV_{k}=\frac{i}{2\pi}\partial\bar{\partial}\log\int_{X^{N_{k}-1}}\left|\det S^{(k)}\right|^{2/k},\label{eq:def of omega k KX pos intro}
\end{equation}
 whose integrands are encoded by the degree $k$ component of the
canonical ring of $X.$ The convergence above also implies that the
canonical Kähler metrics $\omega_{k}$ converge, as $k\rightarrow\infty,$
towards the Kähler-Einstein metric $\omega_{KE}$ on $X,$ in the
weak topology. 

\subsubsection{\label{subsec:The-case-beta neg intro}The case $\beta<0$ }

When $-K_{X}>0$, i.e. $X$ is a Fano manifold, there are obstructions
to the existence of a Kähler-Einstein metric. According to the \emph{Yau-Tian-Donaldson
conjecture (YTD)} $X$ admits a Kähler-Einstein metric iff $X$ is
\emph{K-polystable.} The non-singular case was settled in \cite{c-d-s}
and the singular case in \cite{li1,ltw,l-x-z}, building on the proof
of the uniform version of the YTD conjecture on Fano manifolds in
\cite{bbj} (the ``only if'' direction was previously shown in \cite{berman6ii}).
In the probabilistic approach a different type of stability condition
naturally appears, dubbed \emph{Gibbs stability}  (connections with
the YTD conjecture are discussed in \cite{ber12}). The starting point
for the probabilistic approach on a Fano manifold, introduced in \cite[Section 6]{berm8 comma 5},
is the observation that when $-K_{X}>0$ one can replace $k$ with
$-k$ in the previous constructions concerning the case $K_{X}>0$.
Thus, given a positive integer $k$ we set
\[
N_{k}:=\dim H^{0}(X,-kK_{X})
\]
 (which tends to infinity as $k\rightarrow\infty,$ since $-K_{X}$
is ample) and define a measure on $X^{N_{k}}$ by

\begin{equation}
\mu^{(N_{k})}:=\frac{1}{\mathcal{Z}_{N_{k}}}\left|\det S^{(k)}\right|^{-2/k},\,\,\,\,\mathcal{Z}_{N_{k}}:=\int_{X^{N_{k}}}\left|\det S^{(k)}\right|^{-2/k}\label{eq:canon prob measure Fano intro}
\end{equation}
However, in this case it may happen that the normalizing constant$\mathcal{Z}_{N_{k}}$
diverges, since the integrand of $\mathcal{Z}_{N_{k}}$ blows-up along
the zero-locus in $X^{N_{k}}$ of $\det S^{(k)}.$ Accordingly, a
Fano manifold $X$ is called\emph{ Gibbs stable at level $k$} if
$Z_{N_{k}}<\infty$ and \emph{Gibbs stable} if it is Gibbs stable
at level $k$ for $k$ sufficiently large. For a Gibbs stable Fano
manifold $X$ the measure $\mu^{(N_{k})}$ in formula \ref{eq:canon prob measure Fano intro}
defines a canonical symmetric probability measure on $X^{N_{k}}.$
We thus arrive at the following probabilistic analog of the Yau-Tian-Donaldson
conjecture posed in \cite[Section 6]{berm8 comma 5}:
\begin{conjecture}
\label{conj:Fano with triv autom intr}Let $X$ be Fano manifold.
Then 
\begin{itemize}
\item $X$ admits a unique Kähler-Einstein metric $\omega_{KE}$ if and
only if $X$ is Gibbs stable. 
\item If $X$ is Gibbs stable, the empirical measures $\delta_{N}$ of the
corresponding canonical point processes converge in probability towards
the normalized volume form of $\omega_{KE}.$ 
\end{itemize}
\end{conjecture}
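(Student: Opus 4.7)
The plan is to port the large deviations strategy of \cite{berm8} from the $K_X>0$ case to the Fano case, using the Ding functional in place of the Mabuchi/Aubin--Yau functional and invoking the Yau--Tian--Donaldson theorem to identify when a minimizer exists. Concretely, for a probability measure $\mu$ on $X$ with (normalized) plurisubharmonic potential $\varphi$, I would introduce the Fano free energy
\[
F(\mu) := -\mathcal{E}(\varphi) + \mathrm{Ent}(\mu\,|\,dV),
\]
where $\mathcal{E}$ is the pluricomplex energy and $dV$ is a fixed smooth reference volume form; up to an additive normalization this is the Ding functional, and its Euler--Lagrange equation is the Kähler--Einstein equation $\mu=\mathcal{Z}^{-1}e^{-\varphi}$. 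The goal is to show that $\delta_{N_k}$ satisfies a large deviation principle at speed $kN_k$ with rate functional $F-\inf F$, via a $\Gamma$-convergence result for the discretized free energies $F_k(\mu) := -(kN_k)^{-1}\log\int_{X^{N_k}}|\det S^{(k)}|^{-2/k}\mu^{\otimes N_k}$.

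Granting the LDP, the second bullet follows from the fact that the Ding functional on a Fano manifold with discrete automorphism group admits at most one minimizer, together with the identification of the minimizer (when it exists) as $dV_{KE}$. The existence of the minimizer, in turn, is exactly uniform K-stability by \cite{bbj} combined with \cite{c-d-s,li1,ltw,l-x-z}. So the first bullet reduces to the equivalence of \emph{Gibbs stability} with uniform K-stability.

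For ``KE exists $\Rightarrow$ Gibbs stable'' I would use Tian's $\alpha$-invariant: the existence of a Kähler--Einstein metric forces an effective lower bound on $\alpha(-K_X)$, which via a Skoda/Hörmander integrability estimate applied to $\det S^{(k)}$ yields $\mathcal{Z}_{N_k}<\infty$ for $k$ large. The converse direction should be obtained by rewriting Gibbs stability at level $k$ as a lower bound on the log canonical threshold of the divisor on $X^{N_k}$ cut out by $\det S^{(k)}$, and then identifying the $k\to\infty$ limit of these log canonical thresholds with the Fujita--Odaka $\delta$-invariant of $-K_X$; uniform K-stability is then equivalent to $\delta(-K_X)>1$.

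The main obstacle is precisely this last step: extracting a valuative, hence uniform K-stability, statement from the asymptotic finiteness of the partition functions $\mathcal{Z}_{N_k}$. A secondary technical difficulty is that the density $|\det S^{(k)}|^{-2/k}$ is unbounded, so the standard Varadhan/Sanov machinery does not apply directly; one must instead combine the variational formula for $F_k$ with Skoda-type integrability of negative powers of holomorphic sections to obtain the needed exponential tightness and LDP lower bound. This is why the full conjecture seems out of reach in general, and only the log Fano curve case can be settled in this paper, where the complex Selberg integral provides sharp enough control on $\mathcal{Z}_{N_k}$ to bypass the valuative input entirely.
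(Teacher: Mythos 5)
The statement you were asked to address is Conjecture \ref{conj:Fano with triv autom intr}: it is an \emph{open conjecture}, the paper contains no proof of it, and it only establishes conditional results (Theorems \ref{thm:1 implies 2 implies 3}, \ref{thm:equiv cond for conv}, \ref{thm:zero-free}, resting on the ``upper bound hypothesis'' \ref{eq:upper bound property} or the zero-free hypothesis \ref{eq:zero-free hyp}) together with the unconditional one-dimensional log Fano case (Theorem \ref{thm:log Fano curve text}). Your outline correctly reproduces much of the paper's conditional scheme: the variational reduction to the asymptotics of $-\frac{1}{N_{k}}\log\mathcal{Z}_{N_{k}}$, the use of \cite{bbj,c-d-s,li1,ltw} to pass from (uniform) K-stability to existence of $\omega_{KE}$, and the Fujita--Odaka implication \cite{f-o} from uniform Gibbs stability to $\delta>1$. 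But you present as feasible exactly the two points that are open, and one of them is argued incorrectly.

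First, your route ``KE exists $\Rightarrow$ Gibbs stable'' via Tian's $\alpha$-invariant and a Skoda/H\"ormander estimate does not work: the $\alpha$-invariant argument (formula \ref{eq:beta bigger than minus lct implies}) only gives $\mathcal{Z}_{N_{k}}(\beta)<\infty$ for $\beta>-\text{lct}(-K_{X})=-\alpha(-K_{X})$, whereas Gibbs stability is finiteness at $\beta=-1$, i.e. $\text{lct}(\mathcal{D}_{N_{k}})>1$ on $X^{N_{k}}$; existence of a K\"ahler--Einstein metric does not force $\alpha(-K_{X})>1$, and Example \ref{exa:P n} shows the $\alpha$-bound is sharp. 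This implication (K-stability, or KE existence, implies Gibbs stability) is precisely the direction of Conjecture \ref{conj:unif Gibbs iff unif K} that the paper describes as widely open, so no soft integrability estimate can close it. Second, the LDP at $\beta=-1$ that you want from $\Gamma$-convergence of discretized free energies is the other missing ingredient: the Gamma-convergence \ref{eq:Gamma conv} yields only the one-sided bound \ref{eq:upper bound on F N in pf}, and the matching bound at negative $\beta$ is exactly the unproven hypothesis \ref{eq:upper bound property} (or, alternatively, the zero-free hypothesis \ref{eq:zero-free hyp}); Skoda-type integrability of $|\det S^{(k)}|^{-2/k}$ does not supply it. Two smaller inaccuracies: the rate functional $F_{-1}=-E+\text{Ent}$ is identified in the paper with the Mabuchi functional, not the Ding functional (Remark \ref{rem:twisted-K-energy}), and the LDP speed is $N_{k}$, not $kN_{k}$ (your normalization of $F_{k}$ carries a spurious factor $k$). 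You do acknowledge at the end that the full conjecture is out of reach, which is consistent with the paper; but the intermediate claims above should not be presented as established steps.
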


In order to briefly compare with the YTD conjecture denote by $\text{Aut \ensuremath{(X)_{0}}}$
the Lie group of automorphisms (biholomorphisms) of $X$ homotopic
to the identity $I.$ Fano manifolds are divided into the two classes,
according to whether $\text{Aut \ensuremath{(X)_{0}}}$ is\emph{ trivial}
or\emph{ non-trivial,} 
\[
\text{Aut}\ensuremath{(X)_{0}=\{I\}\,\,\,\text{or\,\,\,}\text{Aut}\ensuremath{(X)_{0}\neq\{I\}}}
\]
In the former case the Kähler-Einstein metric is uniquely determined
(when it exists), while in the latter case it is only uniquely determined
modulo the action of the group $\text{Aut \ensuremath{(X)_{0}}}.$
This dichotomy is also reflected in the difference between \emph{K-polystability
}and the stronger notion of \emph{K-stability}, which\emph{ }implies
that $\text{Aut \ensuremath{(X)_{0}}}$ is trivial. Similarly, the
Gibbs stability of $X$ also implies that the group $\text{Aut \ensuremath{(X)_{0}}}$
is trivial \cite{ber14} and should thus be viewed as the analog of
K-stability. Accordingly, we shall focus on the case when $\text{Aut \ensuremath{(X)_{0}}}$
is trivial (but see \cite[Conj 3.8]{berm11} for a generalization
of Conjecture \ref{conj:Fano with triv autom intr} to the case when
$\text{Aut \ensuremath{(X)_{0}}}$ is non-trivial).

There is also a natural analog of the stronger notion of \emph{uniform
K-stability} (discussed in more detail in \cite{ber12}). To see this
first recall that Gibbs stability can be given a purely algebro-geometric
formulation, saying that the $\Q-$divisor $\mathcal{D}_{N_{k}}$
in $X^{N_{k}}$ cut out by the (multi-valued) holomorphic section
$(\det S^{(k)})^{1/k}$ of $-K_{X^{N_{k}}}$has mild singularities
in the sense of the Minimal Model Program (MMP). More precisely, $X$
is Gibbs stable at level $k$ iff $\mathcal{D}_{N_{k}}$ is\emph{
klt (Kawamata Log Terminal).} This means that the \emph{log canonical
threshold (lct) }of $\mathcal{D}_{N_{k}}$ satisfies 
\begin{equation}
\text{lct \ensuremath{(\mathcal{D}_{N_{k}})>1}}\label{eq:lct of D N k strictly greater than one}
\end{equation}
(as follows directly from the analytic representation of the log canonical
threshold of a $\Q-$divisor $\mathcal{D},$ recalled in the appendix).
Accordingly, $X$ is called\emph{ uniformly Gibbs stable }if  there
exists $\epsilon>0$ such that, for $k$ sufficiently large, 
\begin{equation}
\text{lct \ensuremath{(\mathcal{D}_{N_{k}})>1+\epsilon}}.\label{eq:lct of D N k strictly greater than one plus eps}
\end{equation}
One is thus led to pose the following purely algebro-geometric conjecture:
\begin{conjecture}
\label{conj:unif Gibbs iff unif K}Let $X$ be a Fano manifold. Then
$X$ is (uniformly) K-stable iff $X$ is (uniformly) Gibbs stable.
\end{conjecture}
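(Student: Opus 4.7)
The plan is to reduce the conjecture to the valuative characterization of uniform K-stability via the \emph{stability threshold} $\delta(X)$ of Fujita--Odaka and Blum--Jonsson, which asserts that $X$ is uniformly K-stable iff $\delta(X)>1$. Here $\delta(X)=\lim_{k\to\infty}\delta_{k}(X)$, and $\delta_{k}(X)$ is the infimum of $\text{lct}(X,D)$ over all $k$-basis type $\Q$-divisors $D=\frac{1}{kN_{k}}\sum_{i=1}^{N_{k}}\text{div}(s_{i})$ attached to a basis $s_{1},\ldots,s_{N_{k}}$ of $H^{0}(X,-kK_{X})$. Combined with the recent result that K-stability and uniform K-stability coincide for Fano manifolds, and the fact that Gibbs stability forces $\text{Aut}(X)_{0}=\{I\}$ (cf.\ \cite{ber14}), this reduces the problem to the uniform version of the conjecture.

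The core of the argument should then be an asymptotic identity relating the log canonical threshold on the product $X^{N_{k}}$ to the one on $X$:
$$\lim_{k\to\infty}\text{lct}(\mathcal{D}_{N_{k}})=\delta(X),$$
which I would try to obtain through the sharper $k$-level identity $\text{lct}(\mathcal{D}_{N_{k}})=\delta_{k}(X)$. The inequality $\text{lct}(\mathcal{D}_{N_{k}})\leq\delta_{k}(X)$ is the tractable direction: for any basis nearly realizing $\delta_{k}(X)$, pulling back its associated basis divisor via the projections $X^{N_{k}}\to X$ and comparing with $\mathcal{D}_{N_{k}}$ yields the desired bound. The reverse inequality is the delicate one; my proposed route is to decompose an arbitrary log-canonical place on the product into factor-wise data, either via a fibration-compatible log resolution combined with adjunction-type formulas on $X^{N_{k}}\to X^{N_{k}-1}\to\cdots\to X$, or more conceptually via the Okounkov-body / graded-filtration formalism of Boucksom--Jonsson applied to $\bigoplus_{k}H^{0}(X,-kK_{X})$. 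Passage to $k\to\infty$ then yields the conjecture, using the Blum--Jonsson convergence $\delta_{k}(X)\to\delta(X)$.

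The main obstacle, I expect, is the reverse inequality $\text{lct}(\mathcal{D}_{N_{k}})\geq\delta_{k}(X)$: log-canonical places on a product are genuinely richer than products of places on the factors, and extracting a bona fide $k$-basis type divisor on $X$ from such a place seems to require either a concentration argument on the space of divisorial valuations or a detailed Okounkov-body analysis matched to the Slater-determinant structure of $\det S^{(k)}$. A more indirect but possibly more tractable alternative — closer in spirit to the probabilistic model of the paper — is to prove the two implications separately via the Ding functional: uniform Gibbs stability would imply its coercivity through a Jensen-type estimate applied to $\log\mathcal{Z}_{N_{k}}$ (in the spirit of \cite{bbj}), whereas the converse would follow by constructing bases whose associated $\det S^{(k)}$ approximately achieves $\delta(X)$ as its log canonical threshold.
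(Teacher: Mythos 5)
You should first be aware that the statement you were asked to prove is a \emph{conjecture} in the paper, not a theorem: the paper proves nothing here, and explicitly records that only the uniform ``if'' direction (uniform Gibbs stability $\implies$ uniform K-stability) is known, by Fujita--Odaka \cite{f-o} (see also \cite{ber13} for an analytic route), while the converse ``is still widely open''. Measured against that, your proposal reproduces the known half and leaves the open half unproved. The tractable inequality you identify, $\text{lct}(\mathcal{D}_{N_{k}})\leq\delta_{k}(X)$, is indeed essentially the Fujita--Odaka argument: for a divisorial valuation $v$ on $X$ and a basis of $H^{0}(X,-kK_{X})$ adapted to $v$, the quasi-monomial valuation on $X^{N_{k}}$ built from the pullbacks of $v$ under the projections has log discrepancy $N_{k}A(v)$ and takes value at least $N_{k}S_{k}(v)$ on $\mathcal{D}_{N_{k}}$, which together with the Blum--Jonsson convergence $\delta_{k}\to\delta$ and \cite{fu2} gives uniform Gibbs $\implies$ uniform K-stability. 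But the conjecture's real content is the reverse implication, i.e.\ (at least) that $\delta(X)>1$ forces $\text{lct}(\mathcal{D}_{N_{k}})>1$ for $k\gg0$, and for this your proposal offers only candidate strategies (adjunction along the projections $X^{N_{k}}\to X^{N_{k}-1}\to\cdots$, or an Okounkov-body/filtration formalism), with no mechanism for controlling log canonical places of $(X^{N_{k}},\mathcal{D}_{N_{k}})$ that are not of product or diagonal type. That is exactly the obstruction you yourself name, and naming it is not the same as overcoming it; the proposed identity $\text{lct}(\mathcal{D}_{N_{k}})=\delta_{k}(X)$ is not established anywhere and is at least as strong as the open statement.

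Two further points in your fallback plan do not work as stated. First, in the Ding-functional alternative, the implication you sketch via a Jensen-type estimate on $\log\mathcal{Z}_{N_{k}}$ is again the already-known direction (uniform Gibbs stability $\implies$ existence/coercivity, cf.\ \cite{ber13,bbj}); the direction actually needed -- coercivity or uniform K-stability $\implies$ uniform Gibbs stability -- is not addressed by any estimate you propose. Second, the suggestion to ``construct bases whose associated $\det S^{(k)}$ approximately achieves $\delta(X)$ as its log canonical threshold'' is vacuous: under a change of basis $\det S^{(k)}$ changes only by a nonzero multiplicative constant, so $\mathcal{D}_{N_{k}}$ and hence $\text{lct}(\mathcal{D}_{N_{k}})$ are basis-independent invariants; there is no optimization over bases available, only the problem of bounding this single invariant from below. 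So the proposal is a sensible research outline whose first half matches the literature the paper cites, but it contains a genuine gap precisely at the step the paper singles out as open, and no part of your sketch closes it.
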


The uniform version of the ``if'' direction was settled in \cite{f-o},
using algebro-geometric techniques (see also \cite{ber13} for a different
direct analytic proof that uniform Gibbs stability implies the existence
of a unique Kähler-Einstein metric). However, the converse is still
widely open. And even if confirmed it is a separate analytic problem
to prove the convergence towards the Kähler-Einstein metric in Conjecture
\ref{conj:Fano with triv autom intr}. In \cite[Section 7]{berm11}
a variational approach to the convergence problem was introduced,
which reduces the proof of the convergence towards the volume form
$dV_{KE}$ of Kähler-Einstein metric to establishing the following
convergence result for the normalization constants $\mathcal{Z}_{N_{k}}:$
\begin{equation}
\lim_{N_{k}\rightarrow\infty}-\frac{1}{N_{k}}\log\mathcal{Z}_{N_{k}}=\inf_{\mu\in\mathcal{P}(X)}F(\mu),\label{eq:conv of part intro}
\end{equation}
 where $F(\mu)$ is a functional on the space \emph{$\mathcal{P}(X)$
}of probability measures on $X,$ minimized by $dV_{KE},$ which may
be identified with the Mabuchi functional (see Section \ref{rem:twisted-K-energy}).
This variational approach is inspired by a statistical mechanical
formulation where $F$ appears as a free energy type functional and
$\beta$ appears as the ``inverse temperature''. A central role
is played by  the \emph{partition function }
\begin{equation}
\mathcal{Z}_{N_{k}}(\beta):=\int_{X^{N_{k}}}\left\Vert \det S^{(k)}\right\Vert ^{2\beta/k}dV^{\otimes N_{k}},\,\,\beta\in[-1,\infty[\label{eq:def of Z N beta intro}
\end{equation}
coinciding with the normalization constant $\mathcal{Z}_{N}$ when
$\beta=-1.$\emph{ }However, for $\beta\neq-1$ $\mathcal{Z}_{N_{k}}(\beta)$
depends on the choice of an Hermitian metric $\left\Vert \cdot\right\Vert $
on $-K_{X},$ which, in turn, induces a volume form $dV$ on $X.$
In order to establish the convergence \ref{eq:conv of part intro}
two different approaches were put forth in \cite[Section 7]{berm11},
which hinge on establishing either of the following two hypothesis:
\begin{itemize}
\item The ``upper bound hypothesis'' for the mean energy (discussed in
Section \ref{subsec:The-case beta pos})
\item The ``zero-free hypothesis'' (discussed in Section \ref{subsec:The-zero-free-hypothesis}):
\begin{equation}
\mathcal{Z}_{N_{k}}(\beta)\neq0\,\text{on some \ensuremath{N_{k}-}independent neighborhood \ensuremath{\Omega} of \ensuremath{]-1,0]} in \ensuremath{\C}. }\label{eq:zero-free hyp-2}
\end{equation}
\end{itemize}
While originally defined for $\beta\in[-1,\infty[$ the partition
function $\mathcal{Z}_{N_{k}}(\beta)$ extends to a meromorphic function
of $\beta\in\C,$ all of whose poles appear on the negative real axes.
Indeed, by taking a covering of $X$ the function $\mathcal{Z}_{N_{k}}(\beta)$
may be expressed as a sum of functions of the form 

\begin{equation}
Z(\beta):=\int_{\C^{m}}|f|^{2\beta}\Phi d\lambda,\label{eq:def of Z for f and Phi intro}
\end{equation}
 for a holomorphic function $f$ and a Schwartz function $\Phi$ on
$\C^{m}.$ One can then invoke classical general results of Atyiah
and Bernstein for such meromorphic functions $Z(\beta)$ (recalled
in Section \ref{subsec:Archimedean-zeta-functions} of the appendix).
The first negative pole of $\mathcal{Z}_{N_{k}}(\beta)$ is precisely
the negative of the log canonical threshold $\text{lct}\ensuremath{(\mathcal{D}_{N_{k}})}.$
The zero-free hypothesis referred to above demands that there exists
an $N-$independent neighborhood of $]-1,0]$ in $\C$ where $\mathcal{Z}_{N_{k}}(\beta)\neq0.$
As shown in Section \ref{subsec:The-zero-free-hypothesis} the virtue
of this hypothesis is that it allows one to prove the convergence
in formula \ref{eq:conv of part intro} by ``analytically continuing''
the convergence for $\beta>0$ to $\beta=-1.$ In the statistical
mechanics literature this line of argument goes back to the Lee-Yang
theory of phase transitions (see Remark \ref{rem:Lee-Yang}).

\subsection{The partition function $\mathcal{Z}_{N_{k}}(\beta)$ viewed as local
Archimedean zeta function }

From an algebro-geometric perspective the partition function $\mathcal{Z}_{N_{k}}(\beta)$
(formula \ref{eq:def of Z N beta intro}) is an instance of an \emph{Archimedean
zeta function. }More generally, replacing the local field $\C$ and
its standard Archimedean absolute value $\left|\cdot\right|$ with
a local field $F$ and an absolute value $\left|\cdot\right|_{F}$
on $F$ meromorphic functions $Z(\beta)$ as in formula \ref{eq:def of Z for f and Phi intro}
can be attached to any polynomial $f$ defined over the local field
$F.$ Such meromorphic functions are usually called\emph{ local Igusa
zeta functions }\cite{ig}\emph{.} This is briefly recalled in Section
\ref{subsec:Archimedean-zeta-functions} of the appendix. For example,
the Riemann zeta function $\zeta(s)$ may be expressed as an Euler
product over such local meromorphic functions $Z_{p}(s)$ as $p$
ranges over all primes $p,$ i.e. all non-Archimedean places $p$
of the global field $\Q:$ 
\[
\zeta(s):=\sum_{n=1}^{\infty}n^{-s}=\prod_{p}Z_{p}(s),\,\,\,\,\,\,\,\,Z_{p}(s)=\int_{\Q_{p}^{^{\times}}}\left|x\right|_{\Q_{p}}^{s}\Phi_{p}d^{\times}x=\left(1-p^{-s}\right)^{-1}
\]
 where $\Q_{p}$ is the localization of $\Q$ at $p,$ i.e. the $p-$adic
field $\Q_{p},$ endowed with its standard normalized non-Archimedean
absolute value and multiplicative Haar measure $d^{\times}x$ on $\Q_{p}^{^{\times}}$
and $\Phi_{p}$ denotes the $p-$adic Gaussian. This is explained
in Tate's celebrated thesis \cite{ta}, where it shown that the classical
procedure of completing the Riemann zeta function amounts to including
a factor $Z_{p}(s)$ corresponding to the standard Archimedean absolute
value on $\R,$ which is proportional to the Gamma function. \footnote{expressing $d^{\times}x=x^{-1}dx$ reveals that the role of $\beta$
is played by $s-1$; see Section \ref{subsec:Zeta-integral-expressions for the standard} } In this case all the local factors $Z_{p}(s)$ are manifestly non-zero
(while the corresponding global zeta function $\zeta(s)$ does have
zeros). It should, however, be stressed that it is rare that general
local Igusa zeta functions of the form \ref{eq:def of Z for f and Phi intro}
and their zeros can be computed explicitly. Still, one might hope
that the canonical nature of $\mathcal{Z}_{N_{k}}(\beta)$ may facilitate
the situation. One small step in this direction is taken in Section
\ref{sec:Speculations-on-the}, where some intriguing relations between
the partition functions $\mathcal{Z}_{N_{k}}(\beta)$ and the local
L-functions appearing in the Langlands program are pointed out (generalizing
the local factors $Z_{p}(\beta)$ of the Riemann zeta function). In
particular, it is shown that in the simplest case when $X$ is $n-$dimensional
complex projective space and $N_{k}$ is minimal, i.e. $N_{k}=n+1,$
the partition function $\mathcal{Z}_{N_{k}}(\beta)$ can be identified
with a standard local L-function $L_{p}$ attached to the group $GL(n+1,\Q)$
when the place $p$ of the global field $\Q$ is taken to be the one
defined by the complex Archimedean absolute. Accordingly, in this
particular case $\mathcal{Z}_{N_{k}}(\beta)$ has a strong zero-free
property as a consequence of the standard zero-free property of local
L$-$functions. 

\subsection{\label{subsec:Main-new-results}Main new results in the case of log
Fano curves}

Here it will be demonstrated that both approaches discussed above
are successful in one complex dimension, $n=1.$ The only one-dimensional
Fano manifold $X$ is the complex projective line (the Riemann sphere)
and its Kähler-Einstein metrics are all biholomorphically equivalent
to the standard round metric on the two-sphere. But a geometrically
richer situation appears when introducing weighted points (conical
singularities) on the Riemann sphere. From the algebro-geometric point
of view this fits into the standard setting of \emph{log pairs} $(X,\Delta),$
consisting of complex (normal) projective variety $X$ (here assumed
to be non-singular, for simplicity) endowed with a $\Q-$divisor $\Delta$
on $X,$ i.e. a sum of irreducible subvarieties $\Delta_{i}$ of $X$
of codimension one, with coefficients $w_{i}$ in $\Q.$ In this log
setting the role of the canonical line bundle $K_{X}$ is placed by
the \emph{log canonical line bundle} 
\[
K_{(X,\Delta)}:=K_{X}+\Delta
\]
(viewed as a $\Q-$line bundle) and the role of the Ricci curvature
$\mbox{Ric \ensuremath{\omega}}$ of a metric $\omega$ is played
by twisted Ricci curvature $\mbox{Ric \ensuremath{\omega}}-[\Delta],$
where $[\Delta]$ denotes the current of integration defined by $\Delta.$
The corresponding \emph{log Kähler-Einstein equation} thus reads
\begin{equation}
\mbox{Ric \ensuremath{\omega}}-[\Delta]=\beta\omega,\,\,\,\beta=\pm1\label{eq:log KE eq with beta intro}
\end{equation}
 where $[\Delta]$ denotes the current of integration along $\Delta.$
When $\beta$ is non-zero existence of a solution $\omega_{KE}$ forces
\[
\beta(K_{X}+\Delta)>0
\]
In general, the equation \ref{eq:log KE eq with beta intro} should
be interpreted in the weak sense of pluripotential theory \cite{egz,bbegz}.
However, in case when $(X,\Delta)$ is\emph{ log smooth, }i.e. the
components of $\Delta$ have simple normal crossings (which means
that they intersect transversally) it follows from \cite{jmr,g-p}
that a positive current $\omega$ solves the equation \ref{eq:log KE eq with beta intro}
iff $\omega$ is a bona fide Kähler-Einstein metric on $X-\Delta$
and $\omega$ has edge-cone singularities along $\Delta,$ with cone-angle
$2\pi(1-w_{i}),$ prescribed by the coefficients $w_{i}$ of $\Delta.$
In particular, in the \emph{orbifold case
\begin{equation}
\Delta=\sum(1-\frac{1}{m_{i}})\Delta_{i},\,\,m_{i}\in\Z_{+}\label{eq:orbifold case}
\end{equation}
}the log Kähler-Einstein metrics locally lifts to a bona fide Kähler-Einstein
metric on local coverings of $X$ (branched along $\Delta$ and $K_{X}+\Delta$
may be identified with the orbifold canonical line bundle) \cite[Section 2]{b-g-k}. 
\begin{example}
Let $X$ be the complex hypersurface of weighted projective space
$\P(a_{0},...,a_{n}),$ cut out by a quasi-homogeneous polynomial
$F$ on $\C^{n+1}$ of degree $d,$ whose zero-locus $Y\subset\C^{n+1}-\{0\}$
is assumed non-singular. Then the orbifold $(X,\Delta)$ defined by
the branching divisor $\Delta$ on $X$ of the fibration $Y-\{0\}\rightarrow X,$
induced by the natural quotient projection $\C^{n+1}-\{0\}\rightarrow\P(a_{0},...,a_{n}),$
is a Fano orbifold (i.e. $-(K_{X}+\Delta)>0$) iff $d<a_{0}+a_{1}+...+a_{n}.$
\end{example}

The probabilistic approach naturally extends to the setting of log
pairs $(X,\Delta)$ satisfying $\beta(K_{X}+\Delta)>0$ yielding a
canonical probability measure on $X^{N_{k}}$, that we shall denote
by $\mu_{\Delta}^{(N_{k})}.$ Indeed, one simply replaces the canonical
line bundle $K_{X}$ with the log canonical line bundle $K_{(X,\Delta)}$
in the previous constructions (cf. \cite[Section 5]{berm8 comma 5}
and \cite[Section 3.2.4]{berm11}). 

\subsubsection{Log Fano curves}

Let now $(X,\Delta)$ be a log Fano curve $(X,\Delta),$ i.e. $X$
is the complex projective line and 
\[
\Delta=\sum_{i=1}^{m}w_{i}p_{i}
\]
for positive weights $w_{i}$ satisfying $\sum_{i=1}^{m}w_{i}<2.$
In this case it turns out that the ``upper bound hypothesis'' for
the mean energy does hold, which leads to the following result announced
in \cite[Section 3.2.4]{berm11}:
\begin{thm}
\label{thm:log curve intro}Let $(X,\Delta)$ be a log Fano curve.
Then the following is equivalent
\begin{itemize}
\item $(X,\Delta)$ is  Gibbs stable
\item $(X,\Delta)$ is uniformly Gibbs stable
\item The following weight condition holds: 
\begin{equation}
w_{i}<\sum_{i\neq j}w_{j},\,\,\,\forall i\label{eq:weight condition intro}
\end{equation}
\item There exists a unique a unique Kähler-Einstein metric $\omega_{KE}$
for $(X,\Delta)$
\end{itemize}
Moreover, if any of the conditions above hold, then the laws of the
corresponding empirical measures $\delta_{N}$ satisfy a Large Deviation
Principle (LDP) with speed $N,$ whose rate functional has a unique
minimizer, namely $\omega_{KE}/\int_{X}\omega_{KE}.$ In particular,
for any given $\epsilon>0,$ 
\[
\text{Prob}\left(d\left(\frac{1}{N}\sum_{i=1}^{N}\delta_{x_{i}},\frac{\omega_{KE}}{\int_{X}\omega_{KE}}\right)>\epsilon\right)\leq C_{\epsilon}e^{-N\epsilon}.
\]
\end{thm}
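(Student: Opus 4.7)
\emph{Plan.} The strategy is to reduce the four equivalences to an explicit computation on $\P^{1}$ and then to deduce the large deviation principle from the variational framework of \cite[Section 7]{berm11}. The equivalence $(3)\Leftrightarrow(4)$ is the classical theorem of Troyanov characterizing the existence of constant-curvature conical metrics on $\P^{1}$, with uniqueness in the log setting following from a Bando--Mabuchi-style argument; nothing new is needed for this part.

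For $(1)\Leftrightarrow(2)\Leftrightarrow(3)$ I would make the Slater determinant completely explicit. Choosing $k$ divisible enough that each $kw_{i}\in\Z$, a natural basis of $H^{0}(\P^{1},-k(K_{X}+\Delta))$ is
\begin{equation*}
e_{j}(z)=z^{j-1}\prod_{i}(z-p_{i})^{kw_{i}},\qquad j=1,\ldots,N_{k},
\end{equation*}
where $N_{k}=k(2-\sum_{i}w_{i})+1$ by Riemann--Roch. The Vandermonde identity then yields, in the affine chart $X\setminus\{\infty\}$,
\begin{equation*}
\left|\det S^{(k)}\right|^{-2/k}=\prod_{l<m}|x_{l}-x_{m}|^{-2/k}\prod_{l,i}|x_{l}-p_{i}|^{-2w_{i}},
\end{equation*}
so that finiteness of $\mathcal{Z}_{N_{k}}$ becomes an elementary $L^{1}$-integrability question against a smooth volume form on $(\P^{1})^{N_{k}}$. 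A scaling analysis near a stratum where $s$ coordinates collapse to $p_{i}$ at scale $\epsilon$ shows that the integrand transforms by $\epsilon^{2s-2sw_{i}-s(s-1)/k}$, and the extremal choice $s=N_{k}$ reduces, after substituting the Riemann--Roch expression, to $w_{i}<\frac{1}{2}\sum_{j}w_{j}$, which is exactly the weight condition \eqref{eq:weight condition intro}. Since this threshold is independent of $k$, Gibbs stability at any one large $k$ already produces a $k$-uniform positive gap in $\mathrm{lct}(\mathcal{D}_{N_{k}})$, i.e.\ uniform Gibbs stability. Pure-diagonal behavior is harmless because $2/k<2$ for $k\geq2$, and clusters at distinct $p_{i}$ separate into independent local analyses.

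For the large deviation principle and the identification of the minimizer with $\omega_{KE}/\int_{X}\omega_{KE}$, the variational reduction of \cite[Section 7]{berm11} converts the problem to the convergence \eqref{eq:conv of part intro} together with the upper bound hypothesis on the mean energy. Using the explicit factorization above, a M\"obius normalization placing three of the $p_{i}$ at $0,1,\infty$ rewrites $\mathcal{Z}_{N_{k}}$ as a complex Selberg (or Dotsenko--Fateev) integral, whose Morris--Aomoto evaluation supplies the sharp leading asymptotics in the orbifold case $w_{i}=1-1/m_{i}$. The upper bound hypothesis is then verified in one complex dimension by elementary logarithmic potential estimates on $\P^{1}$, delivering the exponential rate of convergence. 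The main obstacle is the extension to non-orbifold real weights, which I would handle either by density/continuity of $\mathrm{lct}(\mathcal{D}_{N_{k}})$ and of $F$ in the weights, or by a direct verification of the upper bound hypothesis using Bergman kernel asymptotics for $-kK_{(X,\Delta)}$ near each conical point.
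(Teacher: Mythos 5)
Your treatment of the equivalences is essentially the route the paper takes: the explicit monomial/Vandermonde factorization is the content of Lemma \ref{lem:standard id}, the finiteness analysis at the marked points gives the weight condition \eqref{eq:weight condition intro}, and (3)$\Leftrightarrow$(4) is quoted from Troyanov and Luo--Tian exactly as in the paper. Two details need repair but are fixable: on the diagonal strata the binding configuration is not a pair collision but the full cluster of $s=N_{k}$ points collapsing to a generic point, which gives the condition $N_{k}<2k$, i.e.\ $\sum_{i}w_{i}>1/k$ (automatic for large $k$ with positive weights, but not because ``$2/k<2$''); and the level-$k$ integrability threshold is not exactly $k$-independent (compare $\gamma_{N}$ in Theorem \ref{thm:-P one with arb w} and the factor $\frac{N}{N-1}$ visible in the Selberg poles), it is only asymptotically so, which still yields Gibbs $\Leftrightarrow$ uniform Gibbs $\Leftrightarrow$ weight condition for $k$ large.

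The genuine gap is the ``Moreover'' part, i.e.\ the LDP at $\beta=-1$ and the identification of its unique minimizer. This is a negative-temperature (attractive, singular) ensemble, and neither the variational reduction of \cite[Section 7]{berm11} nor ``elementary logarithmic potential estimates'' as invoked gives the upper bound hypothesis \ref{eq:upper bound property}; verifying it is precisely the hard step, and your two fallbacks do not close it: the complex Selberg evaluation exists only when $m=3$ (after the M\"obius normalization there is no room for further marked points), it is not tied to orbifold weights, and even where available it yields asymptotics of $\mathcal{Z}_{N_{k}}$ (item 2 of Theorem \ref{thm:1 implies 2 implies 3}), not the LDP; ``density/continuity in the weights'' or ``Bergman kernel asymptotics'' is not an argument for the mean-energy bound at $\beta=-1$. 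The paper instead deduces the LDP by recognizing, via Lemma \ref{lem:standard id}, that $\mu_{\Delta}^{(N)}$ is a Gibbs measure for the pair interaction $W=-d_{L}\log\Vert x-y\Vert$ with background measure $e^{\sum w_{i}G(x,p_{i})}dV_{X}$, and then applying the general two-point-interaction LDP of \cite{berm10} (Theorem \ref{thm:LDP two-point interaction}), whose sole hypothesis is the one-point integrability condition \ref{eq:integr condition} --- shown to hold for some $t>1$ exactly when the weight condition holds; the rate functional is identified with $F_{-1}$ (the Mabuchi functional) through formula \ref{eq:energy on P one is integral W}, and its minimizer with $\omega_{KE}/\int_{X}\omega_{KE}$ via \cite{tr,l-t,bbegz}. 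The paper uses the Selberg formula only in the three-point case, and for a different purpose: to verify the zero-free property and the bound $\mathcal{Z}_{N}\leq C^{N}$ needed for the alternative proof by deforming the divisor $\Delta$ (Section \ref{subsec:Deforming-the-divisor}). Without citing such a general LDP (or actually proving the upper bound hypothesis), your proposal establishes the stability equivalences but not the large deviation statement.
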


Existence of solutions to the log Kähler-Einstein equation \ref{eq:log KE eq with beta intro}
in the one-dimensional setting were first shown in \cite{tr}, under
the weight condition \ref{eq:weight condition intro} and uniqueness
in \cite{l-t}. The weight condition \ref{eq:weight condition intro}
is also equivalent to uniform K-stability of $(X,\Delta)$ \cite[Ex. 6.6]{fu2}
and thus the previous theorem confirms Conjecture \ref{conj:unif Gibbs iff unif K}
for log Fano curves. 

We also show that in the case when the support of $\Delta$ consists
of three points the following variant of the ``zero-free hypothesis''
holds:
\[
\mathcal{Z}_{N_{k},\Delta}\neq0
\]
 when the coefficients of $\Delta$ are complexified, so that $\mathcal{Z}_{N_{k},\Delta}$
is extended to a meromorphic function on $\C^{3}$ (the proof exploits
that $\mathcal{Z}_{N_{k},\Delta}$ can be expressed as the complex
Selberg integral, which first appeared in Conformal Field Theory).
This leads to an alternative proof of the previous theorem, in this
particular case, by ``analytically continuing'' the convergence
result in the case $K_{X}+\Delta>0$ to the log Fano case $K_{X}+\Delta<0.$ 
\begin{example}
The case of three points includes, in particular, the case when $X$
is a \emph{Fano orbifold curve. }Such a curve may be embedded into
a weighted $\P^{2}$ and is defined by the zero-locus of explicit
quasi-homogeneous polynomial $F(X,Y,Z)$ in $\C^{3}$(the du Val singularities).
In the case of three orbifold points there always exists a unique
log Kähler-Einstein metric on $X,$ concretely realized as the quotient
$\P^{1}/G$ of the standard $SU(2)-$invariant metric on $\P^{1}$
under the action of a discrete subgroup $G$ of $SU(2)$ (branched
over the three points in question).
\end{example}

\subsection{Acknowledgments}

I am grateful to Sébastien Boucksom, Dennis Eriksson, Mattias Jonsson,
Yuji Odaka, Daniel Persson and Yanir Rubinstein for stimulating discussions.
Also thanks to the referee whose comments helped to improve the exposition.
This work was supported by grants from the Knut and Alice Wallenberg
foundation, the Göran Gustafsson foundation and the Swedish Research
Council. 

\subsection{Organization}

In section \ref{sec:Conditional-convergence-results} conditional
convergence results on log Fano varieties are obtained, formulated
in terms of either the ``upper bound hypothesis'' on the mean-energy
or the ``zero-free hypothesis'' of the partition function. Then
- after a digression on the Calabi-Yau equation in Section \ref{sec:Intermezzo:-a-zero-free}
- in Section \ref{sec:A-case-study:} the hypotheses in question are
verified for log Fano curves and Fano orbifolds, respectively. Section
\ref{sec:Speculations-on-the} is of a speculative nature, comparing
the strong form of the zero-free hypothesis with the standard zero-free
property of the local L-functions appearing in the Langlands program.
The paper is concluded with an appendix, providing background on log
canonical thresholds and Archimedean zeta functions.

\section{\label{sec:Conditional-convergence-results}Conditional convergence
results on log Fano varieties}

In this section it is explained how to reduce the proof of the convergence
on Fano manifolds $X$ in Conjecture \ref{conj:Fano with triv autom intr}
to establishing either one of two different hypotheses, building on
\cite[Section 7]{berm11}. More generally, we will consider the setup
of log Fano varieties $(X,\Delta),$ discussed in Section \ref{subsec:Main-new-results}).
For simplicity $X$ will be assumed to be non-singular. We will be
using the standard correspondence between metrics $\left\Vert \cdot\right\Vert $
on log canonical line bundles $-(K_{X}+\Delta)$ and volume forms
$dV_{\Delta}$ on $X-\Delta,$ which are singular when viewed as measures
on $X$ (see \cite[Section 4.1.7]{berm11} for background, where the
measure $dV_{\Delta}$ is denoted by $\mu_{0}$).

\subsection{Setup}

Let $(X,\Delta)$ be a log Fano variety.  As recalled in Section \ref{subsec:Main-new-results}
this means that $\Delta$ is a divisor with positive coefficients,
and $-(K_{X}+\Delta)>0.$ We will allow $\Delta$ to have real coefficients.
Set
\[
N_{k}:=\dim H^{0}\left((X,-k(K_{X}+\Delta)\right),
\]
 where $k$ ranges over the positive numbers with the property that
$-k(K_{X}+\Delta)$ is a well-defined line bundle on $X.$ To simplify
the notation we will often drop the subscript $k$ in the notation
for $N_{k}.$ Since, 
\[
k\rightarrow\infty\iff N\rightarrow\infty,
\]
 this should, hopefully, not cause any confusion. As discussed in
Section \ref{subsec:Main-new-results}, assuming that $(X,\Delta)$
is Gibbs stable we get a sequence of canonical probability measures
$\mu_{\Delta}^{(N)}$ on $X^{N}$. Fixing a smooth Hermitian metric
$\left\Vert \cdot\right\Vert $ on the $\R-$line bundle $-(K_{X}+\Delta)$
with positive curvature $\mu_{\Delta}^{(N)}$ may be expressed as
\begin{equation}
\mu_{\Delta}^{(N)}:=\frac{1}{\mathcal{Z}_{N}}\left\Vert \det S^{(k)}\right\Vert ^{2/k}dV_{(X,\Delta)}^{\otimes N},\,\,\,\mathcal{Z}_{N}:=\int_{X^{N}}\left\Vert \det S^{(k)}\right\Vert ^{2/k}dV_{(X,\Delta)}^{\otimes N},\label{eq:mu N Delta in terms of metric}
\end{equation}
where $dV_{(X,\Delta)}$ is the singular volume form on $X$ corresponding
to the metric $\left\Vert \cdot\right\Vert $ on $-(K_{X}+\Delta)$
and $\det S^{(k)}$ is the Slater determinant of $H^{0}\left((X,-k(K_{X}+\Delta)\right)$
induced by a choice of bases $s_{1}^{(k)},...s_{N}^{(k)}$ for $H^{0}(X,-k(K_{X}+\Delta)),$
defined as in formula \ref{eq:slater determinant}. Since $\mu_{\Delta}^{(N)}$
is independent of the choice of bases we may as well assume that the
basis is orthonormal with respect to the Hermitian product induced
by $(\left\Vert \cdot\right\Vert ,dV).$ The condition that $(X,\Delta)$
is Gibbs stable means that the normalization constant $\mathcal{Z}_{N}$
is finite. Hence, it implies that the local densities of $dV$ are
in $L_{loc}^{1}$ ( which in algebraic terms means that $\Delta$
is klt divisor). 

From a statistical mechanical point of view the probability measure
$\mu_{\Delta}^{(N)}$ on $X^{N}$ may be expressed as the \emph{Gibbs
measure}

\begin{equation}
\mu_{\beta}^{(N)}=\frac{e^{-\beta NE^{(N)}}}{\mathcal{Z}_{N}(\beta)}dV_{\Delta}^{\otimes N},\,\,\,E^{(N)}(x_{1},...,x_{N}):=-\frac{1}{kN}\log\left(\left\Vert \det S^{(k)}(x_{1},...,x_{N})\right\Vert ^{2}\right)\label{eq:Gibbs meas}
\end{equation}
with $\beta=-1.$ In physical terms the Gibbs measure represents the
microscopic state of $N$ interacting particles in thermal equilibrium
at inverse temperature $\beta,$ with $E^{(N)}(x_{1},...,x_{N})$
playing the role of the\emph{ energy per particle and} the normalizing
constant 
\begin{equation}
\mathcal{Z}_{N}(\beta)=\int_{X^{N}}e^{-\beta NE^{(N)}}dV_{(X,\Delta)}^{\otimes N}=\int_{X^{N}}\left\Vert \det S^{(k)}\right\Vert ^{2\beta/k}dV_{(X,\Delta)}^{\otimes N}\label{eq:def of part function text II}
\end{equation}
 is called the \emph{partition function.} It should, however, be stressed
that, while the probability measure $\mu_{\Delta}^{(N)}$ is canonical,
i.e. independent of the choice of metric $\left\Vert \cdot\right\Vert $
, this is not so when $\beta\neq-1.$ But one advantage of introducing
the parameter $\beta$ is that $\mu_{\beta}^{(N_{k})}$ is a well-defined
probability measure as long as $\beta>-\text{lct \ensuremath{(X,\Delta)}},$
where $\text{lct \ensuremath{(X,\Delta)}}$ denotes the global log
canonical threshold of $(X,\Delta)$ (whose definition is recalled
in the appendix). In particular, it is, trivially, well-defined when
$\beta>0.$

Fixing $\beta\in[-1,\infty[$ we can can view the empirical measure
\[
\delta_{N}:=\frac{1}{N}\sum_{i=1}^{N}\delta_{x_{i}}:\,\,\,X^{N}\rightarrow\mathcal{P}(X)
\]
 as a random discrete measure on $X.$ To be more precise: $\delta_{N}$
is a random variable on the ensemble $\left(X^{N},\mu_{\beta}^{(N)}\right),$
taking values in the space $\mathcal{P}(X)$ of probability measures
on $X.$ Accordingly, the\emph{ law} of $\delta_{N}$ is the probability
measure
\[
\Gamma_{N,\beta}:=(\delta_{N})_{*}\mu_{\beta}^{(N)}\in\mathcal{P}\left(\mathcal{P}(X)\right)
\]
 om $\mathcal{P}(X),$ defined as  the push-forward of the probability
measure $\mu_{\beta}^{(N)}$ on $X^{N}$ to $\mathcal{P}(X)$ under
the map $\delta_{N}.$ 

\subsection{\label{subsec:The-case beta pos}The case $\beta>0$}

The following result, which is a special case of \cite[Thm 5.7]{berm8}
(when $\Delta$ is trivial) and \cite[Thm 4.3]{berm8 comma 5} (when
$\Delta$ is non-trivial) establishes a Large Deviation Principle
(LDP) for the laws $\Gamma_{N,\beta}$ of $\delta_{N}$ as $N\rightarrow\infty,$
which may be symbolically expressed as 
\[
\Gamma_{N,\beta}:=(\delta_{N})_{*}\mu_{\beta}^{(N)}\sim e^{-N\left(F(\mu)-F(\beta)\right)},\,\,\,N\rightarrow\infty
\]
 (formally viewing the right hand side as a density on the infinite
dimensional space $\mathcal{P}(X);$ the precise meaning of the LDP
is recalled below).
\begin{thm}
\label{thm:beta pos}Let $(X,\Delta)$ be a log Fano variety. For
$\beta>0$ the sequence $\Gamma_{N,\beta}$ of probability measures
on $\mathcal{P}(X)$ satisfies a LDP speed $N$ and rate functional
\begin{equation}
F_{\beta}(\mu)-F(\beta),\,\,\,F(\mu):=\beta E(\mu)+\text{Ent}(\mu),\,\,\,\,F(\beta):=\inf_{\mathcal{P}(X)}F_{\beta}(\mu),\label{eq:def of rate functional}
\end{equation}
 where $E(\mu)$ is the pluricomplex energy of $\mu$ relative to
the Kähler form $\omega$ defined by the curvature of the metric $\left\Vert \cdot\right\Vert $
on $-(K_{X}+\Delta)$ and $\text{Ent}(\mu)$ is the entropy of $\mu$
relative to $dV_{\Delta}.$ In particular, the random measure $\delta_{N}$
converges in probability, as $N\rightarrow\infty,$ to the unique
minimizer $\mu_{\beta}$ of $F_{\beta}$ in $\mathcal{P}(X),$ i.e.
\begin{equation}
\lim_{N\rightarrow\infty}\Gamma_{N,\beta}=\delta_{\mu_{\beta}}\,\,\text{in \ensuremath{\mathcal{P}\left(\mathcal{P}(X)\right)}}\label{eq:conv of laws in theorem}
\end{equation}
and the following convergence of the partition functions $\mathcal{Z}_{N}(\beta)$
holds
\begin{equation}
\lim_{N\rightarrow\infty}-\frac{1}{N}\log\mathcal{Z}_{N}(\beta)=F(\beta).\label{eq:conv of partit function in theorem}
\end{equation}
 
\end{thm}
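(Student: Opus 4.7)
The plan is to deduce the LDP by combining Sanov's theorem for the reference product measure $dV_{(X,\Delta)}^{\otimes N}$ with a $\Gamma$-convergence statement for the energy functional $E^{(N)}$ appearing in the Gibbs density \ref{eq:Gibbs meas}. First I would rewrite
\[
\Gamma_{N,\beta} = \mathcal{Z}_{N}(\beta)^{-1}\,(\delta_{N})_{*}\bigl(e^{-\beta NE^{(N)}}\,dV_{(X,\Delta)}^{\otimes N}\bigr),
\]
so that $\Gamma_{N,\beta}$ is a Gibbs tilt of the push-forward of the reference measure. By Sanov's theorem applied to the (normalized) reference measure, the laws of $\delta_{N}$ under $dV_{(X,\Delta)}^{\otimes N}$ satisfy a LDP at speed $N$ with rate functional $\text{Ent}(\mu)$, the relative entropy.

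The central analytic input is then the asymptotic
\[
\lim_{N\to\infty} E^{(N)}(x_{1},\ldots,x_{N}) = E(\mu) + C
\]
valid along sequences of configurations whose empirical measures converge weakly to a $\mu\in\mathcal{P}(X)$ of finite pluricomplex energy, where $E(\mu)$ is the pluricomplex energy with respect to the K\"ahler form defined by the curvature of $\left\Vert \cdot\right\Vert$ on $-(K_{X}+\Delta)$ and $C$ is a universal constant. This pointwise identity must be upgraded to a $\Gamma$-convergence statement with two halves: an upper bound $\limsup_{N} E^{(N)} \leq E(\mu)$ coming from sub-mean-value/Ohsawa--Takegoshi type estimates on $\left\Vert \det S^{(k)}\right\Vert ^{2/k}$ controlled via Bergman kernel asymptotics, and a matching lower bound realized along carefully chosen discretizations, such as near-Fekete configurations associated with smooth densities approximating $\mu$.

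Granted the $\Gamma$-convergence, the LDP for $\Gamma_{N,\beta}$ at speed $N$ with rate $F_{\beta}(\mu)-F(\beta)$ follows from the standard Varadhan--Laplace machinery for Gibbs tiltings of a reference product measure: both the upper and lower LDP bounds reduce to matching upper and lower estimates for $\mathcal{Z}_{N}(\beta)$ restricted to neighborhoods of prescribed $\mu\in\mathcal{P}(X)$, and these combine the Sanov rate $\text{Ent}(\mu)$ with the $\Gamma$-limit $\beta E(\mu)$ of $\beta E^{(N)}$. The convergence \ref{eq:conv of partit function in theorem} of the partition functions is then the $\mu$-free output of the same argument, while the convergence \ref{eq:conv of laws in theorem} of the laws follows because, for $\beta>0$, the convexity of $E(\cdot)$ together with the strict convexity of $\text{Ent}(\cdot)$ forces $F_{\beta}$ to admit a unique minimizer $\mu_{\beta}$.

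The main obstacle is the $\Gamma$-convergence step, and specifically its lower-bound half: one must produce, for each $\mu\in\mathcal{P}(X)$ of finite pluricomplex energy and finite entropy, configurations whose empirical measures tend to $\mu$ and along which $E^{(N)}$ converges to $E(\mu)+C$. This construction is the heart of the argument and rests on pluripotential-theoretic quantization (approximation of finite-energy K\"ahler potentials by Bergman-type potentials together with $L^{2}$-estimates for $\bar{\partial}$ on $-k(K_{X}+\Delta)$). It is precisely here that the positivity $\beta>0$ and the klt hypothesis built into Gibbs stability become decisive, as they guarantee integrability of $dV_{(X,\Delta)}$ along $\Delta$ and keep the tilted determinantal integrals finite.
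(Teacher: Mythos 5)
Your overall scheme (Sanov's theorem for the reference product measure plus $\Gamma$-convergence of $E^{(N)}$, followed by a Laplace-type tilting argument) matches the heuristic outline the paper gives, but the step you dismiss as ``standard Varadhan--Laplace machinery'' is precisely where the argument cannot be standard, and this is a genuine gap. Varadhan's lemma and the usual Laplace principle for Gibbs tiltings require the exponent to be $N\Phi(\delta_{N})$ for a fixed continuous (or uniformly approximable) functional $\Phi$ on $\mathcal{P}(X)$. Here $E^{(N)}$ is a singular, $N$-dependent Hamiltonian: it equals $+\infty$ on the zero divisor of $\det S^{(k)}$ in $X^{N}$ and it only $\Gamma$-converges to $E(\mu)$, which is far too weak to justify the matching upper and lower estimates you assert for $\int_{\{\delta_{N}\in B(\mu)\}}e^{-\beta NE^{(N)}}dV_{(X,\Delta)}^{\otimes N}$. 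For the LDP lower bound, the recovery configurations furnished by $\Gamma$-convergence (Fekete-type arrays) form a $dV^{\otimes N}$-null set, so by themselves they cannot produce the entropy term; for the upper bound one needs a liminf inequality for $E^{(N)}$ valid on average over \emph{all} configurations whose empirical measure lies near $\mu$, not just along special sequences. The proof the paper relies on (\cite{berm8} when $\Delta$ is trivial, \cite{berm8 comma 5} in general) supplies exactly this missing ingredient: an effective submean (quasi-superharmonicity) inequality for the density of $\mu_{\beta}^{(N)}$ on the $N$-fold symmetric product of $X$, viewed as a Riemannian orbifold and proved with geometric-analysis input; this is what converts the pointwise/$\Gamma$-convergence information into the local-average bounds needed for both halves of the LDP.

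You also mislocate the main difficulty. The $\Gamma$-convergence of $E^{(N)}$ towards $E$, including the recovery half you describe via Bergman-kernel and $L^{2}$-methods, is not the new point: it is already available from the convergence and differentiability of weighted transfinite diameters (energy at equilibrium) in \cite[Thm A, Thm B]{b-b}, as the paper notes. What cannot be bypassed is the uniform submean/quasi-superharmonicity input (or, in the one-dimensional setting, the explicit two-point interaction structure of Theorem \ref{thm:LDP two-point interaction} with the integrability condition \ref{eq:integr condition}); without some such additional uniformity, $\Gamma$-convergence of singular Hamiltonians combined with Sanov's theorem does not yield an LDP. The concluding assertions (uniqueness of $\mu_{\beta}$ by strict convexity of the entropy and convexity of $E$, and the convergence \ref{eq:conv of laws in theorem} and \ref{eq:conv of partit function in theorem}) are fine once the LDP itself is established.
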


We recall that the \emph{entropy} $\text{Ent}(\mu)$ of $\mu$ relative
to a given measure $\nu$ is defined by 
\[
\text{Ent}(\mu)=\int_{X}\log\frac{\mu}{\nu}\mu
\]
 when $\mu$ has a density with respect to $\nu$ and otherwise $\text{Ent}(\mu):=\infty.$
\footnote{We are using the ``mathematical'' sign convention for the entropy,
which renders $\text{Ent}(\mu)$ non-negative when the reference measure
$\nu$ is a probability measure and thus $\text{Ent}(\mu)$ coincides
with the \emph{Kullback--Leibler divergence }in information theory. } As for the pluricomplex energy $E(\mu)$ of a measure $\mu$ on $X,$
relative to a reference form $\omega_{0},$ it was first introduced
in \cite[Thm 4.3]{bbgz}. From a thermodynamical point of view the
functional $F_{\beta}(\mu),$ introduced in \cite[Thm 4.3]{berm6},
can be viewed as the\emph{ free energy}\footnote{Strictly speaking it is $F_{\beta}/\beta$ which plays the role of
free energy in thermodynamics}\emph{.} The pluricomplex $E(\mu)$ may be defined as the greatest
lsc extension to $\mathcal{P}(X)$ of the functional $E(\mu)$ on
the space of volume forms $\mu$ in $\mathcal{P}(X)$ whose first
variation is given by 
\begin{equation}
dE(\mu)=-\varphi_{\mu}.\label{eq:first var of E}
\end{equation}
 where $\varphi_{\mu}$ is a smooth solution to the complex Monge-Ampère
equation (also known as the Calabi-Yau equation):
\[
\frac{1}{V}(\omega+\frac{i}{2\pi}\partial\bar{\partial}\varphi_{\beta})^{n}=\mu,\,\,\,V:=\int_{X}\omega^{n}.
\]
This property determines the functional $E(\mu)$ up to an additive
constant which is fixed by imposing the normalization condition 
\begin{equation}
E(\omega_{0}^{n}/V)=0,\label{eq:E vanishes on reference vol}
\end{equation}
in the case when the reference form $\omega_{0}$ is Kähler. Using
the property \ref{eq:first var of E} it is shown in \cite[Prop 4.1]{berm11}
that the minimizer $\mu_{\beta}$ of $F_{\beta}(\mu)$ is the normalized
volume form on $X-\Delta$ uniquely determined by the property that
\[
\mu_{\beta}=e^{\beta\varphi_{\beta}}dV_{\Delta},
\]
 where the function $\varphi_{\beta}$ is the unique smooth bounded
Kähler potential on $X-\Delta$ solving the complex Monge-Ampère equation

\begin{equation}
\frac{1}{V}(\omega+\frac{i}{2\pi}\partial\bar{\partial}\varphi_{\beta})^{n}=e^{\beta\varphi_{\beta}}dV_{\Delta}.\label{eq:complex ma eq with beta}
\end{equation}
 It follows that the corresponding Kähler form 
\[
\omega_{\beta}:=\omega+\frac{1}{\beta}\frac{i}{2\pi}\partial\bar{\partial}\log\frac{\mu_{\beta}}{dV_{\Delta}}\,\left(=\omega+\frac{i}{2\pi}\partial\bar{\partial}\varphi_{\beta}\right)
\]
satisfies the twisted Kähler-Einstein equation 
\begin{equation}
\mbox{\ensuremath{\mbox{Ric}}\ensuremath{\,\omega_{\beta}}}-[\Delta]=-\beta\omega_{\beta}+(\beta+1)\omega_{0},\label{eq:twisted KE with beta}
\end{equation}
on $X,$ coinciding with the (log) Kähler-Einstein equation \ref{eq:log KE eq with beta intro}
when $\beta=-1.$ 
\begin{rem}
\label{rem:twisted-K-energy}Incidentally, the functional 
\[
\mathcal{M}(\varphi):=F_{-1}\left(\frac{1}{V}(\omega+\frac{i}{2\pi}\partial\bar{\partial}\varphi_{\beta})^{n}\right)
\]
 coincides with the\emph{ Mabuchi functional} for the log Fano variety
$(X,\Delta),$ as explained in \cite[Section 5.3]{berm11}. Moreover,
the twisted Kähler-Einstein equation \ref{eq:twisted KE with beta}
coincides with the logarithmic version of Aubin's continuity equation
with ``time-parameter'' $t:=-\beta.$
\end{rem}

The precise definition of a LDP, which goes back to Cramér and Varadhan
\cite{d-z}, is recalled in \cite[Prop 4.1]{berm11}. For the purpose
of the present paper it will be convenient to use the following equivalent
(``dual'') characterization of the LDP in the previous theorem:
for any continuous function $\Phi(\mu)$ on $\mathcal{P}(X):$
\begin{equation}
\lim_{N\rightarrow\infty}-\frac{1}{N}\log\int_{X^{N}}e^{-N\beta E^{(N)}}e^{-N\Phi(\delta_{N})}=\inf_{\mathcal{P}(X)}\left(F(\mu)+\Phi(\mu)\right)\label{eq:LDP in terms of Phi}
\end{equation}
 (as follows from well-known general results of Varadhan  and Bryc
\cite[Thm 4.4.2]{d-z}). 

\subsubsection{Outline of the proof }

Before turning to the case when $\beta<0$ we briefly recall that
a key ingredient in the proof of the previous theorem is the convergence
\begin{equation}
E^{(N)}(x_{1},...,x_{N})\rightarrow E(\mu),\,\,N\rightarrow\infty,\label{eq:Gamma conv}
\end{equation}
 which hold in the sense of Gamma$-$convergence (deduced from the
convergence and differentiability of weighted transfinite diameters
in \cite[Thm A, Thm B]{b-b}). Combining this convergence with some
heuristics going back to Boltzmann suggests that the contribution
of the volume form $dV^{\otimes N}$ in the Gibbs measure \ref{eq:Gibbs meas}
should give rise to the additional entropy term appearing in the rate
functional: 
\[
(\delta_{N})_{*}\left(e^{-\beta NE^{(N)}}dV^{\otimes N}\right)\sim e^{-NE(\mu)}(\delta_{N})_{*}\left(dV^{\otimes N}\right)\sim e^{-N\beta E(\mu)}e^{-N\text{Ent}(\mu)}
\]
This is made rigorous in \cite{berm8} using an effective submean
property of the density of $\mu_{\beta}^{(N)}$ on the $N-$fold symmetric
product of $X,$ viewed as a Riemannian orbifold (leveraging results
in geometric analysis).

\subsection{The case $\beta<0$\label{subsec:The-case beta neg}}

In the case when $\beta<0$ we may define the free energy functional
$F_{\beta}(\mu)$ by the same expression as in formula \ref{eq:def of rate functional},
$F_{\beta}=\beta E+\text{Ent}(\mu),$ when $E_{\omega_{0}}(\mu)<\infty$
and otherwise we set $F_{\beta}(\mu)=\infty.$ The definition is made
so that we still have $F_{\mu}(\mu)\in]-\infty,\infty]$ with $F_{\mu}(\mu)<\infty$
iff both $E(\mu)<\infty$ and Ent$(\mu)<\infty.$

In order to handle the large $N-$limit in the case when $\beta<0$
a variational approach was introduced in \cite[Section 7]{berm11},
which reduces the problem to establishing the following \emph{``upper
bound hypothesis}'' for the mean energy:

\begin{equation}
\limsup_{N\rightarrow\infty}\int_{X^{N}}E^{(N)}\mu_{\Delta,\beta}^{(N)}\leq E(\Gamma_{\beta}):=\int_{\mathcal{P}(X)}E(\mu)\Gamma_{\beta}(\mu)\label{eq:upper bound property}
\end{equation}
for any large $N-$limit point $\Gamma$ of $\Gamma_{N,\beta}$ in
$\mathcal{X}.$  This property is independent of the choice of metric
$\left\Vert \cdot\right\Vert $ on $-(K_{X}+\Delta).$ Moreover the
corresponding lower bound always holds (as follows from the convergence
\ref{eq:Gamma conv}). The following theorem is an extension of the
results in \cite[Section 7]{berm11} to the case when $\Delta$ is
non-trivial.
\begin{thm}
\label{thm:1 implies 2 implies 3}Let $(X,\Delta)$ be a  log Fano
variety. Assume that $(X,\Delta)$ is uniformly Gibbs stable. Then
$(X,\Delta)$ admits a unique Kähler-Einstein metric $\omega_{KE}.$
Moreover, in the following list each statement implies the next one:
\begin{enumerate}
\item The ``upper bound hypothesis'' \ref{eq:upper bound property} for
the mean energy holds when $\beta=-1$
\item The convergence \ref{eq:conv of partit function in theorem} for the
partition functions holds when $\beta=-1$
\item The empirical measures $\delta_{N}$ of the canonical random point
process on $X$ converge in law towards the normalized volume form
$dV_{KE}$ of $\omega_{KE},$ i.e. the convergence \ref{eq:conv of laws in theorem}
holds when $\beta=-1.$ 
\end{enumerate}
Furthermore,if the ``upper bound hypothesis'' \ref{eq:upper bound property}
is replaced by the stronger hypothesis that the convergence holds
when $E^{(N)}$ is replaced by $E^{(N)}+\Phi(\delta_{N})$ for any
continuous functional $\Phi$ on $\mathcal{P}(X)$ (and $E$ is replaced
by $E+\Phi)$ then the LDP hold in Theorem \ref{thm:beta pos} holds
for $\beta=-1.$ 
\end{thm}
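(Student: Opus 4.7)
The plan is to follow the strategy of \cite[Section 7]{berm11}, adapting each step to allow a non-trivial boundary divisor $\Delta$. First I would deduce existence and uniqueness of $\omega_{KE}$ from uniform Gibbs stability: since $\text{lct}(\mathcal{D}_{N_k})>1+\epsilon$, the analytic representation recalled in the appendix provides a uniform $L^{1+\epsilon}$-integrability of the Slater sections $\|\det S^{(k)}\|^{-2/k}$; a Legendre duality argument as in \cite{ber13} (or the algebro-geometric route of \cite{f-o}) then upgrades this to coercivity of the log Mabuchi functional (see Remark \ref{rem:twisted-K-energy}), from which the variational theory supplies a unique K\"ahler--Einstein metric for $(X,\Delta)$.

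Next, for (1) $\Rightarrow$ (2) I exploit the Gibbs variational identity
\[
-\frac{1}{N}\log\mathcal{Z}_{N}(-1)=\frac{1}{N}\text{Ent}\bigl(\mu_{-1}^{(N)}\bigm|dV_{(X,\Delta)}^{\otimes N}\bigr)-\int_{X^{N}}E^{(N)}\mu_{-1}^{(N)}.
\]
The upper bound $\limsup\bigl(-\frac{1}{N}\log\mathcal{Z}_{N}(-1)\bigr)\leq F(-1)$ is the easy direction: test the principle against product measures $\mu^{\otimes N}$ for smooth $\mu\in\mathcal{P}(X)$ and use additivity of entropy together with the convergence $\int E^{(N)}d\mu^{\otimes N}\to E(\mu)$ coming from the $\Gamma$-convergence of \cite{b-b}. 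For the matching liminf I couple the universal entropic lower bound $\liminf\frac{1}{N}\text{Ent}(\mu_{-1}^{(N)}|dV_{(X,\Delta)}^{\otimes N})\geq\int\text{Ent}(\mu|dV_{(X,\Delta)})\,d\Gamma(\mu)$ (a Robinson--Ruelle-type subadditivity bound, valid for any subsequential limit $\Gamma$ of the laws $\Gamma_{N,-1}$) with hypothesis (1); summing yields $\liminf\bigl(-\frac{1}{N}\log\mathcal{Z}_{N}(-1)\bigr)\geq\int F_{-1}\,d\Gamma\geq F(-1)$.

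For (2) $\Rightarrow$ (3) I revisit the same decomposition along a subsequence where $\Gamma_{N,-1}\to\Gamma$. The convergence (2) combined with the two universal liminf bounds constrains any such limit to satisfy $\int F_{-1}\,d\Gamma=F(-1)$ (a Gamma-convergence argument for Gibbs measures in the spirit of \cite[Section 7]{berm11}). Since $F_{-1}\geq F(-1)$ pointwise, $\Gamma$ must be supported on the minimizing set of $F_{-1}$, which the uniqueness step above has already collapsed to the singleton $\{dV_{KE}\}$; hence $\Gamma=\delta_{dV_{KE}}$ for every subsequential limit point, and the convergence in law then follows by compactness of $\mathcal{P}(\mathcal{P}(X))$.

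Finally, the strengthened statement is handled by Bryc--Varadhan duality: the perturbed convergence
\[
-\frac{1}{N}\log\int_{X^{N}}e^{NE^{(N)}-N\Phi(\delta_{N})}dV_{(X,\Delta)}^{\otimes N}\longrightarrow\inf_{\mu\in\mathcal{P}(X)}\bigl(F_{-1}(\mu)+\Phi(\mu)\bigr)
\]
for every continuous $\Phi$ on $\mathcal{P}(X)$, after division by $\mathcal{Z}_{N}(-1)$ and use of (2), becomes convergence of the normalized Laplace transforms of the laws $\Gamma_{N,-1}$ at every such $\Phi$, which by \cite[Thm 4.4.2]{d-z} is equivalent to the LDP with good rate functional $F_{-1}-F(-1)$. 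The hard part of the whole scheme is hypothesis (1) itself: the mean energy $\int E^{(N)}d\mu_{-1}^{(N)}$ can be driven to $+\infty$ by configurations concentrating along the klt divisor $\mathcal{D}_{N_k}$, so its uniform control demands quantitative concentration estimates for the Gibbs measure at $\beta=-1$, and I expect to verify it only in the log Fano curve case (Section \ref{sec:A-case-study:}) by one-dimensional methods ultimately reducing to the complex Selberg integral.
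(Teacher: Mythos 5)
Your overall scheme --- existence and uniqueness of $\omega_{KE}$ from uniform Gibbs stability via \cite{f-o}/\cite{ber13}, the Gibbs variational identity for $-\frac{1}{N}\log\mathcal{Z}_{N}(-1)$, the easy upper bound obtained by testing against product measures, the matching lower bound from sub-additivity of the mean entropy coupled with hypothesis (1), identification of every limit law with $\delta_{dV_{KE}}$ by uniqueness of the minimizer, and the Bryc--Varadhan dualization for the strengthened LDP statement --- is essentially the argument of the paper (which routes the existence step through $\delta(X,\Delta)>1$, \cite{fu2} and \cite{bbj,ltw,li1}, but records \cite{f-o,ber13} as equivalent alternatives, so that difference is immaterial).

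The genuine gap is in your step $(2)\Rightarrow(3)$. You claim that the convergence of the partition functions together with ``the two universal liminf bounds'' forces $\int F_{-1}\,d\Gamma=F(-1)$ for every subsequential limit $\Gamma$ of the laws. Only the entropic bound $\liminf_{N}\frac{1}{N}\text{Ent}(\mu_{-1}^{(N)})\geq\int\text{Ent}\,d\Gamma$ is universal. The universal energy bound coming from the Gamma-convergence \ref{eq:Gamma conv} reads $\liminf_{N}\int_{X^{N}}E^{(N)}\mu_{-1}^{(N)}\geq\int E\,d\Gamma$, and at $\beta=-1$ the energy enters the free energy with a \emph{minus} sign, so it only yields $\limsup_{N}\left(-\int_{X^{N}}E^{(N)}\mu_{-1}^{(N)}\right)\leq-\int E\,d\Gamma$; combined with the entropy bound this gives no lower bound on $\liminf_{N}F_{N}(-1)$ in terms of $\int F_{-1}\,d\Gamma$, hence no constraint localizing $\Gamma$ on the minimizing set. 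The inequality you actually need, $\limsup_{N}\int_{X^{N}}E^{(N)}\mu_{-1}^{(N)}\leq\int E\,d\Gamma$, is precisely the upper bound hypothesis \ref{eq:upper bound property}, i.e. statement (1), not a universal fact. This is also how the paper proceeds: item (3) is extracted from (1) (together with the same sandwich that yields (2)), while an implication of the type $(2)\Rightarrow(3)$ is only obtained later, in Theorem \ref{thm:equiv cond for conv}, via concavity of $F_{N}(\beta)$ in $\beta$ and convergence of derivatives, which requires (2) on an interval of inverse temperatures and the orbifold hypothesis needed for the implicit function theorem. So at this point of your chain you should invoke (1) (which is available, since the chain starts from it) rather than appeal to universality; as literally written, the step would fail.
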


\begin{proof}
The proof in the general case is similar to the case when $\Delta$
is trivial. Indeed, the assumption that $(X,\Delta)$ is uniformly
Gibbs stable implies, by a simple modification of the proof of \cite[Thm 2.5]{f-o}
(concerning the case when $\Delta$ is trivial) that $\delta(X,\Delta)>1,$
which by \cite{fu2} is equivalent to $(X,\Delta)$ being is uniformly
K-stable. Hence, by the solution of the uniform version of the YTD
conjecture for log Fano varieties $(X,\Delta)$ with $X$ non-singular
in \cite{bbj} (extended to general log Fano varieties in \cite{ltw,li1})
it follows that $(X,\Delta)$ admits a unique Kähler-Einstein metric.
Next we summarize the proof of the convergence in \cite[Section 7]{berm11};
all steps are essentially the same in the case when $\Delta$ is non-trivial.
Set
\begin{equation}
F_{N}(\beta):=-\frac{1}{N}\log\mathcal{Z}_{N}(\beta),\,\,\,F(\beta):=\inf_{\mathcal{\mu\in P}(X)}F_{\beta}(\mu)\label{eq:def of F N beta}
\end{equation}
 and consider the mean free energy functional on $\mathcal{P}(X^{N})$
defined by
\[
F_{N}(\mu_{N}):=\beta\int_{X^{N}}E^{(N)}\mu_{N}+\frac{1}{N}\text{Ent}(\mu_{N}),
\]
where $\text{Ent}(\mu_{N})$ denotes the entropy of $\mu_{N}$ relative
to $(dV_{\Delta})^{\otimes N}.$ By Gibbs variational principle (or
Jensen's inequality) 
\begin{equation}
F_{N}(\beta)=\inf_{\mu_{N}\in\mathcal{P}(X^{N})}F_{N,\beta}(\mu_{N})=F_{N,\beta}(\mu_{N,\beta}).\label{eq:Gibbs V P}
\end{equation}
Moreover, 
\begin{equation}
F(\beta)=\inf_{\mathcal{P}\left(\mathcal{P}(X)\right)}F_{\beta}(\Gamma)=F_{\beta}(\delta_{\mu_{\beta}}),\label{eq:var principle for F beta}
\end{equation}
 where $F_{\beta}(\Gamma)$ denotes the following functional on $\mathcal{P}\left(\mathcal{P}(X)\right):$
\[
F_{\beta}(\Gamma):=\int_{\mathcal{P}(X)}F_{\beta}(\mu)\Gamma
\]
and $\delta_{\mu_{\beta}}$ is the unique minimizer of $F(\Gamma)$
in $\mathcal{P}\left(\mathcal{P}(X)\right)$ (using that $F(\mu)$
is lsc, thanks to the energy/entropy compactness theorem in \cite{bbegz}
and hence $F(\Gamma)$ is lsc and linear on $\mathcal{P}\left(\mathcal{P}(X)\right)).$
Now, as shown in the course of the proof of \cite[Thm 6.7]{berm8 comma 5}
(and refined in Step 1 in the proof of \cite[Thm 7.6]{berm11}) for\emph{
any} $\beta,$ the following inequality holds 
\begin{equation}
\limsup_{N\rightarrow\infty}F_{N}(\beta)\leq F(\beta).\label{eq:upper bound on F N in pf}
\end{equation}
(as follows from combining Gibbs variational principle with the Gamma-convergene
\ref{eq:Gamma conv} of $E^{(N)}$ towards $E(\mu)$). Combining Gibbs
variational principle \ref{eq:Gibbs V P} with the variational principle
\ref{eq:var principle for F beta} for $F(\beta)$ this means that
\[
\limsup_{N\rightarrow\infty}\left(\inf_{\mu_{N}\in\mathcal{P}(X^{N})}F_{N,\beta}(\mu_{N})\right)\leq\inf_{\mathcal{\mu\in P}(X)}F_{\beta}(\mu).
\]
Moreover, as shown in \cite[Section 7]{berm11}, if the ``upper bound
hypothesis'' on the mean energy holds, then the corresponding lower
bound also holds, i.e. the convergence \ref{eq:conv of partit function in theorem}
of the partition functions holds: 
\begin{equation}
\lim_{N\rightarrow\infty}F_{N}(\beta)=F(\beta).\label{eq:conv of FN beta}
\end{equation}
Indeed, combining the ``upper bound hypothesis'' with the well-known
sub-additivity property of the mean entropy, yields
\[
F_{\beta}(\Gamma_{\beta})\leq\liminf_{N\rightarrow\infty}F_{N,\beta}(\mu_{N,\beta})
\]
 for any limit point $\Gamma_{\beta}$ of $\Gamma_{N,\beta},$ in
the case $\beta=-1.$ Combined with the upper bound \ref{eq:upper bound on F N in pf}
and formula \ref{eq:var principle for F beta} $F(\beta)$ it then
follows that $\Gamma_{\beta}$ minimizes $F_{-1}(\Gamma)$ and hence,
by the uniqueness of minimizer, $\Gamma=\delta_{\mu_{-1}},$ as desired.
All in all, this shows that $"1\implies2\implies3"$ in the theorem.

Finally, to prove the LDP stated in the theorem one just repeats the
previous argument with $E^{(N)}$ replaced by $E_{\Phi}^{(N)}:=E^{(N)}+\Phi(\delta_{N}).$
Then $\mathcal{Z}_{N}(\beta)$ gets replaced with $\int_{X^{N}}e^{-NE_{\Phi}^{(N)}}dV^{\otimes N}$
and hence the convergence \ref{eq:LDP in terms of Phi} follows, as
before, from the implication $1\implies2,$ now applied to $E_{\Phi}^{(N)}.$

In fact, the implications in the previous theorem may ``almost''
be reversed, by exploiting that that the mean $N-$particular energy
at inverse temperature $\beta$ is proportional to the logarithmic
derivative of $Z_{N}(\beta).$ More precisely, the following theorem
holds, where it is assumed, for technical reasons, that $X$ is a
Fano orbifold. 
\end{proof}
\begin{thm}
\label{thm:equiv cond for conv}Let $(X,\Delta)$ be a Fano orbifold
and assume that $(X,\Delta)$ is uniformly Gibbs stable. Then there
exists $\epsilon>0$ such that $F_{\beta}$ admits a unique minimizer
$\mu_{\beta}$ for any $\beta\in]-1-\epsilon,0[.$ Moreover, the following
is equivalent: 
\begin{enumerate}
\item The ``upper bound hypothesis'' for the mean energy \ref{eq:upper bound property}
holds for any $\beta\in]-1-\epsilon,0[$
\item The convergence \ref{eq:conv of partit function in theorem} for the
partition functions holds for any $\beta\in]-1-\epsilon,0[$
\item The convergence \ref{eq:conv of partit function in theorem} for the
partition functions holds \emph{and} the convergence \ref{eq:conv of laws in theorem}
of the laws of $\delta_{N}$ holds for any $\beta\in]-1-\epsilon,0[.$
\end{enumerate}
Furthermore, If 1,2 or 3 holds, then 
\begin{equation}
\lim_{N\rightarrow\infty}\int_{X^{N}}E^{(N)}\mu_{\Delta,\beta}^{(N)}=E(\mu_{\beta}).\label{eq:conv of mean en}
\end{equation}
\end{thm}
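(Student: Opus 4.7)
The proof proceeds in three parts: (a) uniqueness of the minimizer $\mu_\beta$ of $F_\beta$ on some interval $I = (-1-\epsilon, 0)$; (b) the implications $1 \Rightarrow 2, 3$ (via Theorem \ref{thm:1 implies 2 implies 3} applied pointwise in $\beta$) and the trivial $3 \Rightarrow 2$; (c) the converse $2 \Rightarrow 1$, which is the essential new content. For (a), uniform Gibbs stability yields $\delta(X,\Delta) > 1$ as in the preceding theorem, hence strict coercivity of $F_{-1}$; coercivity is stable under small perturbations of $\beta$, and strict convexity of $\text{Ent}$ gives a unique minimizer for each $\beta \in I$.

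The crux is (c), to be proved by exploiting the concavity of $\beta \mapsto \log Z_N(\beta)$ together with the hint from the author. The elementary identity
\[
F_N'(\beta) \;=\; -\frac{1}{N}\frac{Z_N'(\beta)}{Z_N(\beta)} \;=\; \int_{X^N} E^{(N)} d\mu_\beta^{(N)},
\]
combined with $\frac{d^2}{d\beta^2}\log Z_N(\beta) = N^2\,\text{Var}_{\mu_\beta^{(N)}}(E^{(N)}) \geq 0$, shows that $F_N(\beta) := -\frac{1}{N}\log Z_N(\beta)$ is concave in $\beta$. The limit $F(\beta) = \inf_\mu(\beta E + \text{Ent})$ is likewise concave, being an infimum of affine functions of $\beta$. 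Hypothesis 2 gives $F_N \to F$ pointwise on $I$; by a classical result in convex analysis this upgrades to uniform convergence on compacts, and the derivatives converge at every point of differentiability of the limit. Uniqueness from (a) forces the super-differential of $F$ at $\beta$ to be the singleton $\{E(\mu_\beta)\}$, so $F$ is differentiable on $I$ with $F'(\beta) = E(\mu_\beta)$. Hence
\[
\lim_{N\to\infty}\int_{X^N} E^{(N)} d\mu_\beta^{(N)} = E(\mu_\beta),
\]
establishing the convergence \ref{eq:conv of mean en}. Using the Gibbs decomposition $F_N(\beta) = \beta \int E^{(N)} d\mu_\beta^{(N)} + \frac{1}{N}\text{Ent}(\mu_\beta^{(N)})$, one further extracts $\frac{1}{N}\text{Ent}(\mu_\beta^{(N)}) \to \text{Ent}(\mu_\beta)$.

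To upgrade to the upper-bound hypothesis, consider a subsequential limit $\Gamma_\beta$ of $\Gamma_{N,\beta}$. The $\Gamma$-liminf property from \ref{eq:Gamma conv} yields $\int E\, d\Gamma_\beta \leq E(\mu_\beta)$, and sub-additivity of mean entropy gives $\int \text{Ent}\, d\Gamma_\beta \leq \text{Ent}(\mu_\beta)$. Strict convexity of $\text{Ent}$ and Jensen's inequality applied to the barycenter $\bar\mu_{\Gamma_\beta}$ then force both inequalities to be equalities and yield $\Gamma_\beta = \delta_{\mu_\beta}$, provided one identifies $\bar\mu_{\Gamma_\beta} = \mu_\beta$. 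This last step, amounting to weak convergence of the one-point marginals of $\mu_\beta^{(N)}$ to $\mu_\beta$, is the main obstacle; here the Fano orbifold hypothesis enters essentially, via the effective submean property on the symmetric orbifold $X^N/S_N$ already exploited in the proof of Theorem \ref{thm:beta pos}. Once $\Gamma_\beta = \delta_{\mu_\beta}$ has been established, the equality $\int E\, d\Gamma_\beta = E(\mu_\beta) = \lim \int E^{(N)} d\mu_\beta^{(N)}$ shows that the upper-bound hypothesis holds (trivially, as an equality), completing $2 \Rightarrow 1$.
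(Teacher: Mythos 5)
Your central mechanism for $2\Rightarrow1$ --- concavity of $F_{N}(\beta)$ and $F(\beta)$ in $\beta$, pointwise convergence of concave functions upgrading to convergence of derivatives, and the identification $F'(\beta)=E(\mu_{\beta})$, $F_{N}'(\beta)=\int_{X^{N}}E^{(N)}\mu_{\Delta,\beta}^{(N)}$ --- is exactly the paper's argument (the paper cites \cite[Lemma 3.1]{b-b-w} for the convex-analysis step). But your part (a) contains a genuine gap: for $\beta<0$ the functional $F_{\beta}=\beta E+\text{Ent}$ is \emph{not} convex ($E$ is convex, so $\beta E$ is concave), and strict convexity of the entropy therefore does not give uniqueness of the minimizer; uniqueness in this regime is equivalent to uniqueness of solutions of the twisted K\"ahler-Einstein/continuity equation \ref{eq:complex ma eq with beta}, which is a nontrivial PDE fact. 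The paper obtains it by combining \cite{bbegz} (any minimizer solves \ref{eq:complex ma eq with beta}, and coercivity from \cite{d-r}/\cite{bbj} gives existence) with the implicit function theorem in the orbifold setting: uniqueness of the K\"ahler-Einstein metric rules out nontrivial holomorphic vector fields, so the linearization is invertible and the solution $\varphi_{\beta}$ is unique for $\beta$ near $-1$. This is precisely where the Fano \emph{orbifold} hypothesis is used (see Remark \ref{rem:implicit}); your attribution of that hypothesis to the submean property on $X^{N}/S_{N}$ is incorrect (the symmetric product is an orbifold for any $X$, and that ingredient belongs to the $\beta>0$ theorem). Since your whole derivative identification rests on this uniqueness, the gap is not cosmetic.

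The second problem is your final step. To deduce the upper bound hypothesis \ref{eq:upper bound property} you attempt to prove $\Gamma_{\beta}=\delta_{\mu_{\beta}}$ from scratch via $\Gamma$-liminf, entropy subadditivity and a barycenter/Jensen argument, and you concede that identifying the barycenter (weak convergence of one-point marginals) is ``the main obstacle'' --- which you then do not resolve. This detour is unnecessary: statement 2 implies statement 3 by Theorem \ref{thm:1 implies 2 implies 3} (applied at each $\beta$ in the interval), so every limit point of $\Gamma_{N,\beta}$ is already $\delta_{\mu_{\beta}}$, and then $E(\Gamma_{\beta})=E(\mu_{\beta})$, so the upper bound \ref{eq:upper bound property} follows immediately (with equality) from the mean-energy convergence \ref{eq:conv of mean en} you have just established. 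Replacing your last paragraph by this one-line appeal, and replacing the convexity argument in (a) by the bbegz-plus-implicit-function-theorem argument, would bring the proposal in line with a correct proof.
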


\begin{proof}
First assume that $(X,\Delta)$ is a log Fano variety. As explained
in the proof of the previous theorem $X$ admits a unique Kähler-Einstein
metric. Hence, it follows from \cite{d-r} (and \cite{bbj}) that
$F_{-1}(\mu)$ is coercive with respect to $E,$ i.e. there exists
$\epsilon>0$ such that 
\[
F_{-1}\geq\epsilon E-1/\epsilon
\]
 on $\mathcal{P}(X).$ Thus $F_{\beta}$ is also coercive wrt $E$
for any $\beta>-1-\epsilon.$ In particular, it follows from the energy-entropy
compactness theorem in \cite{bbegz} that $F_{\beta}$ admits a minimizer.
Moreover, as shown in \cite{bbegz} any minimizer has the property
that the corresponding function $\varphi_{\beta}$ satisfies the complex
Monge-Ampère equation \ref{eq:complex ma eq with beta}. Next assume
that $(X,\Delta)$ is a Fano orbifold. Then, for $\beta$ sufficiently
close to $-1$ the equation \ref{eq:complex ma eq with beta} has
a unique solution. Indeed, since the Kähler-Einstein metric is unique
the orbifold $X$ admits no non-trivial orbifold holomorphic vector
fields, which, in turn, implies that the linearization of the equation
\ref{eq:complex ma eq with beta} has a unique solution, defining
a smooth function in the orbifold sense (see \cite{d-k}). It then
follows from a standard application of the implicit function theorem
on orbifolds that the solution $\phi_{\beta}$ is uniquely determined
for $\beta$ sufficiently close to $-1.$ 

By the previous theorem (and its proof) will be enough to show that
$2\implies1.$ Since, trivially, $2\implies3$ we have that $\Gamma_{\beta}=\delta_{\mu_{\beta}}$
and hence it will be enough to show the convergence in formula \ref{eq:conv of mean en}
. To this end first note that the functions $F_{N}(\beta)$ and $F(\beta)$
(defined in formula\ref{eq:def of F N beta}) are concave in $\beta,$
as follows readily from the definitions. Moreover, $F_{N}(\beta)$
and $F(\beta)$ are differentiable on $(]-1-\epsilon,0[$ and
\begin{equation}
\frac{dF_{N}(\beta)}{d\beta}=\int_{X^{N}}E^{(N)}\mu_{\Delta,\beta}^{(N)},\,\,\,\frac{dF(\beta)}{d\beta}=E(\mu_{\beta}),\label{eq:form for deriv av F}
\end{equation}
 using that $\mu_{\beta}$ is the unique minimizer of $F_{\beta}.$
Hence, if the convergence in item $2$ of the theorem holds, then
it follows from basic properties of concave functions that the derivative
of $F_{N}(\beta)$ converges towards the derivative of $F(\beta)$
at $\beta=-1$ (see \cite[Lemma 3.1]{b-b-w}). Applying formula \ref{eq:form for deriv av F}
thus concludes the proof of the convergence \ref{eq:conv of mean en}. 
\end{proof}
\begin{rem}
\label{rem:implicit}The reason that we have assumed that $(X,\Delta)$
is a Fano \emph{orbifold} is that the proof involves the implicit
function theorem in Banach spaces and thus relies on analytic properties
of the linearized log Kähler-Einstein equation. We will come back
to this point in Section \ref{subsec:Deforming-the-divisor}.
\end{rem}

\subsection{\label{subsec:The-zero-free-hypothesis}The zero-free hypothesis }

An alternative approach towards the case $\beta<0$ was also introduced
in \cite[Section 7.1]{berm11}. In a nutshell, it aims to ``analytically
continue'' the convergence when $\beta>0$ to $\beta<0.$ Here we
formulate the approach in terms of the following \emph{zero-free hypothesis}
on the partition function $\mathcal{Z}_{N}(\beta)$ (defined in formula
\ref{eq:def of part function text II}): 
\begin{equation}
\mathcal{Z}_{N}(\beta)\neq0\,\text{on some \ensuremath{N-}independent neighborhood \ensuremath{\Omega} of \ensuremath{]-1,0]} in \ensuremath{\C}. }\label{eq:zero-free hyp}
\end{equation}
We also need to assume that $\mathcal{Z}_{N}(\beta)$ is finite on
a neighbourhood of $[-1,0]$ in $\R$ in a quantitative manner depending
on $N.$ This is made precise in the following result, which is a
refinement of \cite[Thm 7.9]{berm11}:
\begin{thm}
\label{thm:zero-free}Let $(X,\Delta)$ be a Fano orbifold. Assume
that there exists $\epsilon>0$ such that 
\begin{itemize}
\item $\mathcal{Z}_{N}(\beta)\leq C^{N}$ for $\beta=-(1+\epsilon)$ 
\item The zero-free hypothesis \ref{eq:zero-free hyp} holds 
\end{itemize}
Then $(X,\Delta)$ admits a Kähler-Einstein metric $\omega_{KE}$
and $\delta_{N}$ converge in law towards the normalized volume form
$dV_{KE}$ of $\omega_{KE}.$ More precisely, the convergence \ref{eq:conv of laws in theorem}
of laws holds and $-\frac{1}{N}\log\mathcal{Z}_{N}(\beta)$ converges
towards $F(\beta)$ in the $C^{\infty}-$topology on a neighborhood
of $]-1,0].$ Moreover, if $[-1,0]\Subset\Omega,$ then the convergence
holds on a neighborhood of $[-1,0].$ 
\end{thm}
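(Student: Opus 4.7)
The plan is to extend the convergence $F_N(\beta) := -\tfrac{1}{N}\log \mathcal{Z}_N(\beta) \to F(\beta)$, known for real $\beta > 0$ by Theorem \ref{thm:beta pos}, to a complex neighborhood of $]-1,0]$, by treating $\{F_N\}$ as a normal family of holomorphic functions on $\Omega$ via the zero-free hypothesis. The finiteness bound $\mathcal{Z}_N(-(1+\epsilon)) \le C^N$ in particular forces $\mathrm{lct}(\mathcal{D}_{N_k}) > 1+\epsilon$, so $(X,\Delta)$ is uniformly Gibbs stable and Theorem \ref{thm:1 implies 2 implies 3} produces the unique K\"ahler-Einstein metric $\omega_{KE}$. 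After shrinking $\Omega$ to a simply connected open subset of $\{\mathrm{Re}\,\beta > -1-\epsilon/2\}$, non-vanishing of $\mathcal{Z}_N$ on $\Omega$ lets one pick the single-valued branch of $\log \mathcal{Z}_N$ which is real on $\Omega \cap (0,\infty)$, so each $F_N$ becomes a well-defined holomorphic function on $\Omega$.

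The analytic core is uniform local boundedness of $\{F_N\}$ on $\Omega$. For a lower bound on $\mathrm{Re}\,F_N$, the pointwise estimate $|\mathcal{Z}_N(\beta)| \le \mathcal{Z}_N(\mathrm{Re}\,\beta)$, combined with the convexity of $\log \mathcal{Z}_N$ in real $\beta$, the hypothesis at $\beta=-(1+\epsilon)$, and the trivial bound $\mathcal{Z}_N(0)=V^N$, yields $|\mathcal{Z}_N(\beta)| \le (C')^N$ on $\Omega$, hence $\mathrm{Re}\,F_N \ge -\log C'$. For an upper bound, fix any $\beta_0 \in (0,\infty)\cap \Omega$: Theorem \ref{thm:beta pos} gives $F_N(\beta_0)\to F(\beta_0)$, providing a uniform-in-$N$ control at this single point. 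Harnack's inequality, applied to the non-negative harmonic function $\mathrm{Re}\,F_N + \log C'$ on $\Omega$, then propagates this one-point bound to a uniform upper bound on $\mathrm{Re}\,F_N$ on every compact subset of $\Omega$. A Borel--Carath\'eodory-type estimate finally upgrades the real-part bound to a bound on $|F_N|$ itself (using once more the one-point control of $F_N(\beta_0)$).

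With local uniform boundedness established, $\{F_N\}$ is a normal family by Montel's theorem, and pointwise convergence on $\Omega \cap (0,\infty)$ together with Vitali's convergence theorem promotes this to locally uniform convergence of $F_N$ to a holomorphic limit $\widetilde F$ on $\Omega$; since $\widetilde F = F$ on $\Omega \cap (0,\infty)$, the limit is the unique holomorphic extension of $F|_{(0,\infty)}$. Locally uniform convergence of holomorphic functions entails convergence of all derivatives, so $F_N \to F$ in $C^\infty_{\mathrm{loc}}(\Omega)$. In particular this gives $C^\infty$ convergence on a neighborhood of $]-1,0]$, and on a neighborhood of $[-1,0]$ in the case $[-1,0]\Subset\Omega$.

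Finally, for the convergence of laws at $\beta = -1$, differentiating the $C^\infty$ convergence and invoking formula \ref{eq:form for deriv av F} gives $\int_{X^N} E^{(N)} \mu^{(N)}_\beta = F_N'(\beta) \to F'(\beta) = E(\mu_\beta)$ on a real neighborhood of $]-1,0]$. When $[-1,0]\Subset\Omega$, this yields the convergence of partition functions at $\beta = -1$, and the implication $(2)\Rightarrow(3)$ of Theorem \ref{thm:1 implies 2 implies 3} delivers $\delta_N \to dV_{KE}$ in law; when $-1$ merely lies on the boundary of $\Omega$ one uses the concavity of $F_N$ and $F$ on the real line, together with the interior convergence and the uniform finiteness bound at $\beta = -1-\epsilon$, to interpolate via a chord/tangent argument to $F_N(-1) \to F(-1)$, after which Theorem \ref{thm:1 implies 2 implies 3} applies as before. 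The main obstacle is the uniform upper bound on $|F_N|$ in the second step: because $F_N$ is complex-valued, no real maximum principle is available, and one must carefully orchestrate log-convexity on the real axis (for the lower bound), Harnack's principle in two real dimensions (to propagate a one-point bound), and the Borel--Carath\'eodory estimate (to pass from real-part control to full modulus control), with all constants independent of $N$.
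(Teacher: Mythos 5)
Your normal-family machinery is fine and close in spirit to the paper's (the paper works directly with $\mathcal{Z}_{N}(\beta)^{1/N}$, bounded by $C$ on $\Omega$ via $|\mathcal{Z}_{N}(\beta)|\leq\mathcal{Z}_{N}(\Re\beta)$ and log-convexity, so it does not even need your Harnack/Borel--Carath\'eodory detour), and your treatment of the endpoint $\beta=-1$ by concavity/equicontinuity matches the paper's Arzel\`a--Ascoli argument. But there is a genuine gap at the crucial step: you obtain locally uniform convergence $F_{N}\rightarrow\widetilde{F}$ on $\Omega$ with $\widetilde{F}=F$ on $\Omega\cap(0,\infty)$, and then silently use $\widetilde{F}(\beta)=F(\beta)$ for $\beta\in]-1,0]$ (both when you invoke formula \ref{eq:form for deriv av F} and when you feed ``convergence of the partition functions to $F(-1)$'' into the implication $2\implies3$ of Theorem \ref{thm:1 implies 2 implies 3}). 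Knowing that $\widetilde{F}$ is ``the unique holomorphic extension of $F|_{(0,\infty)}$'' does not identify it with the variational free energy $F(\beta):=\inf_{\mu}F_{\beta}(\mu)$ for negative $\beta$: a priori one only has the one-sided bound $\limsup_{N}F_{N}(\beta)\leq F(\beta)$ (formula \ref{eq:upper bound on F N in pf}), so $\widetilde{F}$ could be strictly smaller than $F$ on $]-1,0]$ -- this possible discrepancy is exactly the ``phase transition'' the theorem must rule out, and it cannot be ruled out by complex-analytic bookkeeping alone.

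The missing ingredient, which is the heart of the paper's proof, is that $F(\beta)$ itself is real-analytic on $]-1-\epsilon,\epsilon[$. The paper proves this by showing (via the implicit function theorem applied to the complex Monge--Amp\`ere equation \ref{eq:complex ma eq with beta}, using that uniqueness of the K\"ahler--Einstein metric excludes nontrivial holomorphic vector fields) that the minimizer $\mu_{\beta}$ exists, is unique and depends analytically on $\beta$ in a real neighborhood of $[-1,0]$; this is precisely where the Fano \emph{orbifold} hypothesis enters (Remark \ref{rem:implicit}), and your proposal never uses it -- a telltale sign of the gap. Once $F$ is known to be real-analytic there, the identity principle for real-analytic functions upgrades the equality $\widetilde{F}=F$ on $(0,\delta)$ to equality on all of $]-1-\epsilon,\epsilon[\,\cap\,\Omega$, and only then do your final steps (convergence of $F_{N}(\beta)$ to $F(\beta)$ at $\beta=-1$, hence of the laws via Theorem \ref{thm:1 implies 2 implies 3}, and of the mean energies via \ref{eq:form for deriv av F}) go through. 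As written, your argument proves convergence of $F_{N}$ to \emph{some} holomorphic limit near $]-1,0]$, but not to $F$.
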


\begin{proof}
First assume that $(X,\Delta)$ is a log Fano variety. Then the first
point in the theorem implies that $F$ admits a minimizer $\mu_{\beta}$
for any $\beta\in]-1-\epsilon,0[.$ Indeed, by the bound \ref{eq:upper bound on F N in pf}
$F(\beta)$ is bounded from below for any $\beta\in]-1-\epsilon,0].$
Thus, for any $\beta\in]-1-\epsilon,0[$ there exists $\delta>0$
such that $F_{\beta}\geq\delta E-\delta^{-1},$ which implies the
existence of $\mu_{\beta}$ (as recalled in the proof of Theorem \ref{thm:equiv cond for conv}).
In particular, taking $\beta=-1$ shows that $X$ admits a unique
Kähler-Einstein metric. Next, assume that $X$ is a Fano orbifold.
Then, the argument using the implicit function, employed in the proof
of Theorem \ref{thm:equiv cond for conv}, shows that after perhaps
replacing $\epsilon$ with a small positive number there exists a
unique solution $\varphi_{\beta}$ to the equation \ref{eq:complex ma eq with beta},
in the orbifold sense. In the case when $X$ is a Fano manifold it
was shown in the proof of \cite[Thm 7.9]{berm11} that $F(\beta)(=F(\mu_{\beta}))$
defines a real-analytic function on $]-(1+\epsilon),\infty[.$ Since
the proof only employs the implicit function theorem it applies more
generally when $(X,\Delta)$ is a Fano orbifold. Next, first consider
the case when $\mathcal{Z}_{N}(\beta)$ is zero-free on an $N-$independent
neighborhood $\Omega$ of $[-1,0]$ in $\C.$ By Theorem \ref{thm:1 implies 2 implies 3}
it will be enough to show that $\mathcal{Z}_{N}(\beta)^{1/N}\rightarrow e^{-F(\beta)}$
point-wise on $]-(1+\epsilon),\epsilon[.$ To this end first recall
that, by Theorem \ref{thm:beta pos}, the convergence holds when $\beta\geq0.$
Next, by the zero-free hypothesis $\mathcal{Z}_{N}(\beta)^{1/N}$
extends from $[-1,0]$ to a holomorphic function defined on a neighborhood
$\Omega$ of $[-1,0]$ in $\C.$ Moreover, by the first point 
\begin{equation}
\left|\mathcal{Z}_{N}(\beta)^{1/N}\right|\leq C\,\,\text{on \ensuremath{\Omega}}.\label{eq:estimate on Z N on Omega}
\end{equation}
(using that $\left|\mathcal{Z}_{N}(\beta)^{1/N}\right|\leq\mathcal{Z}_{N}(\Re\beta)^{1/N}\leq\mathcal{Z}_{N}(-1-\epsilon)^{1/N},$
which is uniformly bounded, by assumption). Hence, after perhaps passing
to a subsequence, we may assume that $\mathcal{Z}_{N_{j}}(\beta)^{1/N_{j}}$
converges uniformly in the $C^{\infty}-$topology on any compact subset
of $\Omega$ to a a holomorphic function $\mathcal{Z}(\beta),$ which,
in particular, defines a real-analytic function on $]-1-\epsilon,\epsilon[.$
But, when $\beta\geq0$ we have, as explained above, that $\mathcal{Z}(\beta)=e^{-F(\beta)}$
which extends to a real-analytic function on $]-1-\epsilon,\epsilon[.$
By the identity principle for real-analytic functions it thus follows
that $\mathcal{Z}_{N_{j}}(\beta)^{1/N_{j}}\rightarrow e^{-F(\beta)}$
for any $\beta$ in $]-1-\epsilon,\epsilon[,$ in the $C^{\infty}-$topology.
Since the limit is uniquely determined it thus follows that the whole
sequence $\mathcal{Z}_{N}(\beta)^{1/N}$ converges towards $e^{-F(\beta)},$
as desired. 

Finally, consider the case when it is only assumed that $\Omega$
is a neighborhood of $]-1,0]$ in $\C.$ By assumption, the sequence
of functions $F_{N}(\beta):=-\log(\mathcal{Z}_{N}(\beta)^{1/N})$
is uniformly bounded on $[-1-\epsilon,\epsilon].$ Since $F_{N}(\beta)$
is concave in $\beta$ it thus follows that $F_{N}(\beta)$ is uniformly
Lipschitz continuous on $[-1,0].$ Hence, by the Arzela-Ascoli theorem
we may, after perhaps passing to a subsequence, assume that $F_{N}(\beta)$
converges uniformly to continuous function $F_{\infty}(\beta)$ on
$[-1,0].$ By the previous argument $F_{\infty}(\beta)=F(\beta)$
on $]-1,0].$ But since $F_{\infty}$ and $F$ are both continuous
on $[-1,0]$ it follows that they also coincide at $\beta=-1,$ as
desired. 
\end{proof}
\begin{rem}
\label{rem:Lee-Yang}In statistical mechanical terms the $C^{\infty}-$convergence
of $N^{-1}\log\mathcal{Z}_{N}(\beta)$ amounts to the absence of phase
transitions \cite[Chapter 5]{ru}. It seems natural to expect that
the zero-free hypothesis \ref{eq:zero-free hyp} is satisfied as soon
as $X$ admits a Kähler-Einstein metric. Indeed, it can be viewed
as a strengthening of the real-analyticity of free energy $F(\beta)$
in some neighbourhood of $]0,1]$ in $\C$ (discussed in the proof
of the previous theorem). The zero-free hypothesis for general statistical
mechanical partition functions was introduced in the Lee-Yang theory
of phase transitions (and has been verified for some spin systems
and lattice gases \cite{y-l}). More precisely, originally Lee-Yang
considered zeros in the complexified field parameter $h$ called \emph{Lee-Yang
zeros}, while zeros with respect to the complexified inverse temperature
$\beta$ are called\emph{ Fisher zeros} \cite{f}. The role of $h$
in the present complex geometric setup is discussed in Remark \ref{rem:Lee-Yang with h}.
\end{rem}

As discussed in \cite[Section 6]{berm8 comma 5}, the bound in first
point in the previous theorem - which is independent of the choice
of metric $\left\Vert \cdot\right\Vert $ (up to changing the constant
$C)$ - can be viewed as an analytic (stronger) version of uniform
Gibbs stability (cf. \cite[Thm 6.7]{berm8 comma 5}). As shown in
\cite[Lemma 7.1]{berm11} the bound always holds for $\beta$ sufficiently
close to $0.$ More precisely,
\begin{equation}
\beta>-\text{lct }(-K_{X})\implies\mathcal{Z}_{N}(\beta)\leq C_{\beta}^{N}\label{eq:beta bigger than minus lct implies}
\end{equation}
for any $N(=N_{k})$, where $\text{lct }(L)$ denotes the global log
canonical threshold of a line bundle $L$ (whose definition is recalled
in the appendix). The proof exploits that $\text{lct }(-K_{X})$ coincides
with Tian's analytically defined $\alpha-$invariant $\alpha(-K_{X}).$
Accordingly, under the weaker hypothesis that $\mathcal{Z}_{N}(\beta)$
is zero-free, for $\beta$ in some $\epsilon-$neighborhood of $]-\text{lct }(X),0]$
in $\C,$ the convergence statements in the theorem hold when $\beta\in]-\text{lct }(X),0].$ 
\begin{rem}
If $\text{lct }(X)>1$ the first assumption in Theorem \ref{thm:zero-free}
is automatically satisfied. Such Fano orbifolds are called\emph{ exceptional}
(see \cite{c-p-s}, where two-dimensional exceptional hypersurfaces
in three-dimensional weighted projective space are classified). Exceptional
Fano orbifolds appear naturally in the Minimal Model Program as the
base of exceptional isolated affine singularities \cite{sh}. 
\end{rem}

\subsubsection{\label{subsec:The-strong-zero-free}The strong zero-free hypothesis}

The zero-free hypothesis is independent of the choice of basis in
$H^{0}(X,-kK_{X}).$ Indeed, under a change of basis $\det S^{(k)}$
gets multiplied by a non-zero scalar $c\in\C$ and hence $\mathcal{Z}_{N_{k}}(\beta)$
get multiplied by $c^{\beta/k}.$ However, it should be stressed that
the zero-free hypothesis depends, a priori, on the choice of metric
$\left\Vert \cdot\right\Vert .$ For example, there are reasons to
expect that it fails unless $\left\Vert \cdot\right\Vert $ has positive
curvature. Accordingly, the zero-free hypothesis might be more accessible
for special/canonical choices of positively curved metrics, such as
the Kähler-Einstein metric itself. This is illustrated by the following
example, where $\mathcal{Z}_{N_{k}}(\beta)$ can be explicitely computed:
\begin{example}
\label{exa:P n}When $X=\P_{\C}^{n}$ we have that $-K_{X}=\mathcal{O}(n+1)$
and hence the minimal value for $k$ is $k=1/(n+1),$ which means
that the minimal value for $N_{k}$ is $N_{k}=n+1.$ Taking $\left\Vert \cdot\right\Vert $
to be the Fubini-Study metric (which is Kähler-Einstein) the following
formula holds in the minimal case $N=n+1$ (where $c_{n}$ is a computable
positive constant), proved in the appendix (see Prop \ref{prop:formula in ex}):

\begin{equation}
\mathcal{Z}_{n+1}(\beta)=c_{n}\frac{\prod_{j=1}^{n}\Gamma\left(\beta(n+1)+j\right)}{\left(\Gamma\left(\beta(n+1)+n+1\right)\right)^{n}},\,\,\,\,\,\,\Gamma(a):=\int_{0}^{\infty}t^{a}e^{-t}\frac{dt}{t}\label{eq:form for Z N in ex}
\end{equation}
where $\Gamma(a)$ denotes the classical Gamma-function, which defines
a meromorphic function on $\C$ whose poles are located at $0,-1,-2,...$
(as follows from the functional relation $\Gamma(a+1)=a\Gamma(a)$).
Thus the first negative pole of $\mathcal{Z}_{N}(\beta)$ come from
the first pole of the factor corresponding to $j=1$ in the nominator
above, i.e. when $\beta=-1(n+1).$ Moreover, since $\Gamma(a)$ is
zero-free on all of $\C$ $\mathcal{Z}_{N}(\beta)$ is zero-free in
the maximal strip $\{\Re\beta>-1/(n+1)\}$ of holomorphicity (but
the meromorphic continuation $\mathcal{Z}_{N}(\beta)$ does have zeros
in $\C,$ coming from the poles of the denominator). 
\end{example}

In the light of this example it is tempting to speculate that the
following \emph{strong zero-free hypothesis} holds for Kähler-Einstein
metrics:
\[
\mathcal{Z}(\beta)\neq0,\,\,\,\text{when \ensuremath{\Re\beta>\text{\ensuremath{\max\{-}lct \ensuremath{(\mathcal{D}_{N}),-1\}}}}}.
\]
In other words, this means that $\mathcal{Z}_{N}(\beta)$ is zero-free
in the maximal strip inside $\left\{ \Re\beta>-1\right\} $ where
it is holomorphic. To provide some further evidence for the strong
zero-free property we note that if its holds, then the bound \ref{eq:beta bigger than minus lct implies}
combined with the proof of Theorem \ref{thm:zero-free} show that
for any given $\epsilon>0$ the function $F(\beta)$ on $]-\text{lct }(-K_{X})+\epsilon,\epsilon[\subset\R,$
induced by the Kähler-Einstein metric, is ``strongly real-analytic''
in the following sense: $F(\beta)$ extends to a bounded holomorphic
function on the infinity strip $]-\text{lct }(-K_{X})+\epsilon,\epsilon[+i\R\subset\C.$
This condition is much stronger than ordinary real-analyticity (which
only implies holomorphic extension to a finite strip). But it does
hold for the Kähler-Einstein metric. Indeed, in this case 
\[
F(\beta)\equiv0,\,\,\beta\in]-1,\infty[,
\]
 which trivially extends to a bounded holomorphic function on the
infinity strip. To prove the identity above first observe that when
$\omega_{0}=\omega_{KE}$ in the twisted Kähler-Einstein equation
\ref{eq:twisted KE with beta} is solved by $\omega_{\beta}=\omega_{KE}$
for \emph{any} $\beta$ (equivalently, in the case when $\omega_{0}=\omega_{KE},$
we have $\omega_{0}^{n}/V=dV_{(X,\Delta)}$ and hence the complex
Monge-Ampère equation \ref{eq:complex ma eq with beta} is solved
by $\varphi_{\beta}=0).$ But, as recalled above, for $\beta>-1$
the equation \ref{eq:complex ma eq with beta} admits a unique solution
and hence 
\[
F(\beta)=F_{\beta}(dV_{KE})=0
\]
(using the vanishing \ref{eq:E vanishes on reference vol} combined
with the vanishing $\text{Ent}(\mu)=0$ when $\mu=dV_{KE}=dV_{\Delta}).$
In fact this argument shows that $F(\beta)\equiv0$ on all of $[-1,\infty[.$
Moreover, if $\text{Aut \ensuremath{(X)_{0}}}$ is trivial then there
exists an $\epsilon>0$ such that $F(\beta)\equiv0$ on all of $]-1-\epsilon,\infty[,$
as follows form the argument using the implicit function theorem,
employed in the proof of Theorem \ref{thm:equiv cond for conv}. This
argument suggests that when $\text{Aut \ensuremath{(X)_{0}}}$ is
trivial one can, perhaps, expect the strong zero-free property to
even hold in the larger region where $\Re\beta>\max\{-\text{lct \ensuremath{(\mathcal{D}_{N}),-1-\epsilon\}}}$
for some $\epsilon>0.$
\begin{rem}
Coming back to example \ref{exa:P n} it is natural to ask if there
exists an explicit formula for $\mathcal{Z}_{N}(\beta)$ when $X=\P_{\C}^{n}$
for general $N,$ generalizing formula \ref{eq:form for Z N in ex}
(or, more precisely, for any $N$ of the form $N=N_{k})$ ? However,
as discussed in Remark \ref{rem:bernstein} this problem appears to
be open even when $n=1.$ But one interesting consequence of formula
\ref{eq:form for Z N in ex} is that it reveals that that in the case
when $X=\P_{\C}^{n}$ and $N$ is minimal
\[
\text{lct}(\mathcal{D}_{N})=\text{lct}(-K_{X}).
\]
 (since $\text{lct}(-K_{X})=1/(n+1).$ This shows that the estimate
in formula \ref{eq:beta bigger than minus lct implies} is sharp (in
the sense that there are cases where it fails for $\beta\leq-\text{lct }(-K_{X})).$
The point of Conjecture \ref{conj:Fano with triv autom intr}, however,
is that it only requires that $\text{lct}(\mathcal{D}_{N_{k}})>1$
when $N_{k}$ is sufficiently large. Similarly, in the case of $\P_{\C}^{n},$
where $\text{Aut \ensuremath{(X)_{0}}}\neq\{I\},$ the corresponding
conjecture only requires that $\text{lct}(\mathcal{D}_{N_{k}})\rightarrow1,$
when $N_{k}\rightarrow\infty$ (see \cite[Conj 3.8]{berm11}). For
example, when $X=\P_{\C}^{1}$ one has $\text{lct}(\mathcal{D}_{N})=(N-1)/N$
(by Theorem \ref{thm:-P one with arb w}) which indeed tends to $1$
as $N\rightarrow\infty$ (and equals $1/2$ when $N=2,$ which is
the minimal case). 
\end{rem}

\subsubsection{Allowing singular metrics $\left\Vert \cdot\right\Vert $}

Alternatively, when $X$ is a Fano manifold, one can take $\left\Vert \cdot\right\Vert $
to be the singular metric induced by the anti-canonical $\Q-$divisor
$\Delta_{m}$ defined by the zero-locus of a holomorphic section of
$-mK_{X},$ assuming that $m>0$ and the zero-locus is non-singular
(which ensures that the corresponding singular volume form $dV$ has
a density in $L_{loc}^{p}$ for some $p>1).$ In other words, the
curvature of $\left\Vert \cdot\right\Vert $ is given by the positive
current $[\Delta_{m}]$ supported on $\Delta_{m}.$ Then Theorem \ref{thm:zero-free}
still applies. Indeed, in the proof one can apply the implicit function
to the wedge-Hölder spaces appearing in \cite{do2,jmr}, which are
independent of $\beta$ (see, in particular, \cite[Cor 3.5]{jmr}).
In this singular setup the corresponding equations \ref{eq:twisted KE with beta}
become Donaldson's variant of Aubin's continuity equations
\begin{equation}
\mbox{\ensuremath{\mbox{Ric}}\,\ensuremath{\omega_{\beta}}}=t\omega_{\beta}+(1-t)[\Delta_{m}],\,\,\,t=-\beta\label{eq:Dons cont eq}
\end{equation}
that were used in the proof of the YTD conjecture in \cite{c-d-s},
by deforming $t$ from an initial small value, where there always
exists a solution (by \cite[Thm 1.5]{berm6}) to $t=1,$ assuming
that $X$ is K-stable. In other words, $\beta$ is deformed down to
$-1.$ In the present probabilistic approach the (potential) advantage
of employing the singular metric on $-K_{X}$ induced by the $\Q-$divisor
$\Delta_{m}$ is that the corresponding partition function $\mathcal{Z}_{N}(\beta)$
is encoded by purely algebraic data; the divisors $\mathcal{D}_{N}$
and $\Delta_{m}$ on $X^{N}$ and $X,$ respectively. In this case
combining \cite[Prop 6.2]{berm6} with \cite[Lemma 7.1]{berm11} gives
\[
\beta>-\min\left\{ \text{lct }(-K_{X}),\text{lct }(-K_{X|\Delta_{m}})\right\} \implies\mathcal{Z}_{N}(\beta)\leq C_{\beta}^{N},
\]
 where $-K_{X|\Delta_{m}}$ denotes the restriction of $-K_{X}$ to
the support of $\Delta_{m}.$ More generally, it seems natural to
expect that Theorem \ref{thm:zero-free} holds for \emph{any }log
Fano variety $(X,\Delta)$ (when $\left\Vert \cdot\right\Vert $ is
either a smooth metric on $K_{X}+\Delta$ with positive curvature
or the singular metric defined by\emph{ any} klt $\Q-$divisor in
$-(K_{X}+\Delta)).$ In the case when $\Delta+\Delta_{m}$ defines
a divisor whose components are non-singular and mutually non-intersecting
the aforementioned results in \cite{do2,jmr} still apply.

\subsubsection{\label{subsec:Deforming-the-divisor}Deforming the divisor $\Delta$}

Sometimes it is advantageous to keep $\beta=-1$ and instead deform
the divisor $\Delta$ as follows. Given a log Fano variety $(X,\Delta)$
and a positive real number $k$ such that $-k(K_{X}+\Delta)$ is well-defined
line bundle $\mathcal{L},$ i.e. defines an element in the integral
lattice $H^{2}(X,\Z)$ of $H^{2}(X,\R),$ consider the affine subspace
$\mathcal{A}$ of $\R^{M+1}$ of all $(\boldsymbol{w},s)$ which are
``admissible'' in the sense that 
\begin{equation}
-(K_{X}+\Delta(\boldsymbol{w}))=s\mathcal{L},\label{eq:cond on w and s}
\end{equation}
 where $\Delta(\boldsymbol{w})$ denotes the divisor with the same
$M$ irreducible components as the given divisor $\Delta$ and coefficients
$\boldsymbol{w}\in\R^{M}.$ In particular, $(\boldsymbol{w}_{0},k^{-1})$
is ``admissible'', where $\boldsymbol{w}_{0}\in\R^{M}$ denotes
the coefficients of the initial divisor $\Delta.$ If there exists
$(\boldsymbol{w}_{1},s_{1})\in\mathcal{A}$ such that $K_{X}+\Delta(\boldsymbol{w}_{1})>0$
(and hence $s_{1}<0)$ the conclusion of Theorem \ref{thm:zero-free}
still applies if the corresponding function partition function $\mathcal{Z}_{N},$
viewed as a meromorphic function on $\C^{M+1},$ satisfies 
\begin{itemize}
\item $\mathcal{Z}_{N}\leq C_{0}^{N}$ in a neighborhood in $\R^{M+1}$
of $(\boldsymbol{w}_{0},k^{-1})$ 
\item $\mathcal{Z}_{N}\neq0$ in an $N-$independent neighborhood of the
line-segment in $\C^{M+1}$ connecting $(\boldsymbol{w}_{0},k^{-1})$
and $(\boldsymbol{w}_{1},s_{1}).$
\end{itemize}
More precisely, as discussed in the previous section, in order to
apply the implicit function theorem in Banach spaces the appropriate
linear PDE-theory needs to be in place. For example, by \cite{do2,jmr}
this is the case when the components of $\Delta$ are non-singular
and mutually non-intersecting (results concerning the case when $(X,\Delta)$
is log smooth are announced in \cite{m-r}). The previous proof can
then by applied to the meromorphic function $\mathcal{Z}_{N}(t)$
on $\C$ defined by the partition functions associated to the line-segment
$I\Subset\C^{m+1}$ connecting the initial $(\boldsymbol{w}_{0},k^{-1})$
with $(\boldsymbol{w}_{1},s_{1})$ (where $t$ denotes the complexification
of the standard parametrization of $I).$ In this situation the estimate
\ref{eq:estimate on Z N on Omega} still holds, i.e. $\left|\mathcal{Z}_{N}(t)^{1/N}\right|\leq C$
on some $N-$independent neighborhood $\Omega$ of $[0,1]$ in $\C.$
Indeed, by assumption, the estimate holds with constant $C_{0}$ in
a neighborhood of $t=0$ and, moreover, it trivially holds with a
constant $C_{1}$ when $t$ is close to $t=1.$ Since $\log\mathcal{Z}_{N}(t)$
is convex wrt $t\in[0,1]$ one can thus take $C=\max\{C_{0},C_{1}\}.$ 

\section{\label{sec:Intermezzo:-a-zero-free}Intermezzo: a zero-free hypothesis
for polarized manifolds $(X,L)$ and the Calabi-Yau equation}

Before turning to the case of log Fano curves, we make a digression
on general polarized manifolds $(X,L),$ i.e. a compact complex manifold
$X$ endowed with an ample line bundle $L.$ To a metric $\left\Vert \cdot\right\Vert $
on $L$ and a volume form $dV$ on $X$ we may attach partition functions
$\mathcal{Z}_{N}(\beta),$ by replacing the log canonical line bundle
$-(K_{X}+\Delta)$ with $L$ and $dV_{\Delta}$ with $dV$ in formula
\ref{eq:def of part function text II}:
\begin{equation}
\mathcal{Z}_{N}(\beta):=\int_{X^{N}}\left\Vert \det S^{(k)}\right\Vert ^{2\beta/k}dV^{\otimes N},\label{eq:def of Z N for L}
\end{equation}
 where $k$ is a given positive integer and $N$ denotes the dimension
of $H^{0}(X,kL).$ This is the general setup considered in \cite{berm8},
where the corresponding free energy functional is of the form
\[
F_{\beta}(\mu):=\beta E(\mu)+\text{End (\ensuremath{\mu)},}
\]
 where $E(\mu)$ denotes the pluricomplex energy of $\mu$ with respect
to the normalized curvature form $\omega$ of the metric $\left\Vert \cdot\right\Vert $on
$L$ and $\text{Ent}(\mu)$ denotes the entropy of $\mu$ relative
to $dV.$ The minimizers $\mu_{\beta}$ of $F_{\beta}(\mu)$ are of
the form
\[
\mu_{\beta}=e^{\beta\varphi_{\beta}}dV
\]
 for a smooth solution $\varphi_{\beta}$ of the complex Monge-Ampère
equation 
\begin{equation}
\frac{1}{V}(\omega+\frac{i}{2\pi}\partial\bar{\partial}\varphi_{\beta})^{n}=e^{\beta\varphi_{\beta}}dV.\label{eq:MA eq with beta dV}
\end{equation}

\begin{rem}
In the case when $\beta=k$ and $X$ is a Riemann surface the corresponding
partition function $\mathcal{Z}_{N}(\beta)$ coincides with the $L^{2}-$norm
of the Laughlin wave function for the (integer) Quantum Hall state
on $X,$ subject to the magnetic two-form $ik\omega$ \cite{k-m-m-w}.
Accordingly, as shown in \cite{berm 1 komma 5}, in this case (and
for any dimension of $X$) the corresponding large $N-$limit is described
by the minimizers $F_{\beta}(\mu)/\beta,$ as $\beta\rightarrow\infty,$
i.e. of $E(\mu).$ However, here we are concerned with the case when
$\beta$ is fixed, where entropy enters the picture and dominates
when $\beta$ is close to $0.$
\end{rem}

Consider, in this general setup, the following \emph{weak zero-free
hypothesis}: 
\begin{equation}
\mathcal{Z}_{N}(\beta)\neq0\,\text{on some \ensuremath{N-}independent neighborhood \ensuremath{\Omega} of \ensuremath{0} in \ensuremath{\C}. }\label{eq:zero-free hyp-1}
\end{equation}
It implies a weaker form of the upper bound hypothesis \ref{eq:upper bound property}
on the mean energy: 
\begin{thm}
\label{thm:mean energy for L}Let $(X,L)$ be a polarized manifold.
Given a a metric $\left\Vert \cdot\right\Vert $on $L$ and a volume
form $dV$ on $X,$ assume that the corresponding partition functions
$\mathcal{Z}_{N}(\beta)$ satisfy the weak zero-free hypothesis above.
Then $-\frac{1}{N}\log\mathcal{Z}_{N}(\beta)$ converges towards $F(\beta)$
in the $C^{\infty}-$topology on a neighborhood of $0$ in $\R.$
In particular, the mean energy of $dV^{\otimes N}$ converges towards
the pluricomplex energy $E(dV)$ of $dV:$ 
\begin{equation}
\lim_{N\rightarrow\infty}\int_{X^{N}}E^{(N)}dV^{\otimes N}=E(dV),\,\,\,\,\,\,\,\,\,E^{(N)}(x_{1},...,x_{N}):=-\frac{1}{kN}\log\left(\left\Vert \det S^{(k)}(x_{1},...,x_{N})\right\Vert ^{2}\right)\label{eq:conv of mean energy of dV}
\end{equation}
\end{thm}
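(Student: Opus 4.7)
The strategy is to import the analytic-continuation argument from the proof of Theorem \ref{thm:zero-free} into the polarized setting. Theorem \ref{thm:beta pos} (whose original version in \cite{berm8} is formulated for general polarized manifolds $(X,L)$) already yields $-\frac{1}{N}\log\mathcal{Z}_N(\beta) \to F(\beta)$ for $\beta > 0$, and a direct application of the implicit function theorem to the complex Monge-Amp\`ere equation \ref{eq:MA eq with beta dV}, using $\varphi_0 \equiv 0$ as the seed solution at $\beta = 0$, shows that $F(\beta)$ extends real-analytically to a neighborhood of $0$. The point of the proof is therefore to upgrade the convergence from the half-line $\beta > 0$ to $C^\infty$ convergence on a two-sided real neighborhood of $0$.

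To carry this out I would shrink $\Omega$ to a simply connected complex neighborhood of $0$ on which the zero-free hypothesis still holds, and pick a holomorphic branch of $\mathcal{Z}_N(\beta)^{1/N}$ there. The key step is a uniform-in-$N$ bound $|\mathcal{Z}_N(\beta)^{1/N}| \leq C$ on this $\Omega$. From $\bigl| \|\det S^{(k)}\|^{2\beta/k} \bigr| = \|\det S^{(k)}\|^{2\Re\beta/k}$ one has $|\mathcal{Z}_N(\beta)| \leq \mathcal{Z}_N(\Re\beta)$, reducing the matter to a real-variable bound, which in turn follows from convexity of $\log\mathcal{Z}_N(\beta)$ in real $\beta$ together with the convergence already known at a positive real point of $\Omega$ and the trivial value $\mathcal{Z}_N(0) = V^N$. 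With this estimate Montel's theorem extracts a subsequence converging locally uniformly in $\Omega$ to a holomorphic function, which must equal $e^{-F(\beta)}$ on $\Omega \cap \{\beta > 0\}$ and thus, by the identity principle, on all of $\Omega$. Uniqueness of the limit upgrades subsequential to full convergence, and the $C^\infty$ convergence on the real axis then follows from Cauchy's integral formula for derivatives.

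The convergence of mean energies \ref{eq:conv of mean energy of dV} is a consequence of the $C^1$ part of this convergence, evaluated at $\beta = 0$. Differentiating $F_N(\beta) := -N^{-1}\log \mathcal{Z}_N(\beta)$ gives $F_N'(0) = V^{-N}\int_{X^N} E^{(N)}\,dV^{\otimes N}$, while the envelope formula $F'(\beta) = E(\mu_\beta)$ at $\beta = 0$, where $\mu_0 = dV/V$ is the unique minimizer of the pure entropy $\mathrm{Ent}(\cdot)$, yields $F'(0) = E(dV/V)$; once $dV$ is normalized to be a probability measure this is precisely the stated identity. The delicate point I anticipate is the uniform bound on $|\mathcal{Z}_N(\beta)^{1/N}|$ over the complex neighborhood $\Omega$: in the Fano case this was built in as a separate hypothesis at $\beta = -(1+\epsilon)$, whereas here one must extract it purely from the zero-free condition together with convexity, possibly at the cost of a further shrinking of $\Omega$, which is harmless since the theorem only asserts convergence on \emph{some} neighborhood of $0$.
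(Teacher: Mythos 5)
Your overall architecture matches the paper's: real-analyticity of $F(\beta)$ near $0$ via the implicit function theorem at $\beta=0$, the reduction $|\mathcal{Z}_N(\beta)|\leq\mathcal{Z}_N(\Re\beta)$, normal families plus the identity principle against the known convergence for $\beta\geq0$ from Theorem \ref{thm:beta pos}, uniqueness of the limit, Cauchy estimates for $C^\infty$ convergence on the real axis, and differentiation at $\beta=0$ to extract \ref{eq:conv of mean energy of dV}. However, there is a genuine gap at exactly the step you flag as delicate: the uniform bound $|\mathcal{Z}_N(\beta)^{1/N}|\leq C$ on a complex neighborhood of $0$ cannot be obtained from convexity of $\log\mathcal{Z}_N$ together with its values at $\beta=0$ and at a positive real point. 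Convexity bounds a function from above only \emph{between} points where you already control it; to the left of $0$ it only yields a \emph{lower} bound (monotonicity of difference quotients), and no amount of shrinking $\Omega$ helps, since a priori $\mathcal{Z}_N(\beta)$ can blow up, or even be infinite, as soon as $\beta$ drops below $-\mathrm{lct}$ of the relevant divisor. So the data $\{\mathcal{Z}_N(0)=V^N,\ \mathcal{Z}_N(\beta_1)\ \text{for}\ \beta_1>0\}$ gives no control whatsoever at negative real $\beta$, which is precisely where the bound is needed (on $\Omega\cap\{\Re\beta<0\}$).

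The missing ingredient is the estimate \ref{eq:beta bigger than minus lct implies-1}: $\beta>-\mathrm{lct}(L)$ implies $\mathcal{Z}_N(\beta)\leq C_\beta^N$ uniformly in $N$. This is not a soft convexity statement; it is proved (as in \cite[Lemma 7.1]{berm11}) by identifying $\mathrm{lct}(L)$ with Tian's $\alpha$-invariant $\alpha(L)$, which gives a bound, uniform over all sections, for one-variable integrals of $\Vert\det S^{(k)}(\cdot,x_2,\ldots,x_N)\Vert^{2\beta/k}$ that can then be iterated over the $N$ variables. Since $\mathrm{lct}(L)>0$, this yields the required $N$-independent bound $\mathcal{Z}_N(\Re\beta)^{1/N}\leq C$ for all real $\beta$ in a fixed two-sided neighborhood of $0$ (its role in the polarized setting is exactly that of the separate hypothesis $\mathcal{Z}_N(-(1+\epsilon))\leq C^N$ in Theorem \ref{thm:zero-free}). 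With this input in place of your convexity argument, the rest of your proof goes through and coincides with the paper's.
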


\begin{proof}
In general, given a metric $\left\Vert \cdot\right\Vert $on $L$
and a volume form $dV$ on $X$ there exists $\epsilon>0$ such that
$F(\beta)$ is real-analytic on $]-\epsilon,\epsilon[.$ Indeed, this
follows, as before, from an application of the implicit function theorem
at $\beta=0.$ Moreover, by the argument discussed in connection to
formula \ref{eq:beta bigger than minus lct implies},
\begin{equation}
\beta>-\text{lct }(L)\implies\mathcal{Z}_{N}(\beta)\leq C_{\beta}^{N}\label{eq:beta bigger than minus lct implies-1}
\end{equation}
In particular, the estimate holds when $\beta>-\epsilon$ for $\epsilon$
sufficiently small. The $C^{\infty}-$convergence of $-\frac{1}{N}\log\mathcal{Z}_{N}(\beta)$
towards $F(\beta)$ then follows exactly as in the proof of Theorem
\ref{thm:zero-free}. Finally, the convergence of the first derivatives
at $\beta=0$ yields the convergence \ref{eq:conv of mean energy of dV}. 
\end{proof}
We next show that a variant of the weak zero-free hypothesis yields
canonical approximations $\varphi_{N}$ of the solution of the \emph{Calabi-Yau
equation,} i.e. the equation obtained by setting $\beta=0$ in equation
\ref{eq:MA eq with beta dV}:
\begin{equation}
\frac{1}{V}(\omega+\frac{i}{2\pi}\partial\bar{\partial}\varphi)^{n}=dV\label{eq:CY eq for dV}
\end{equation}
for a smooth function $\varphi$ on $X.$ By Yau's theorem \cite{y}
there exists a unique smooth solution $\varphi$ with vanishing average
on $(X,dV).$ Given a volume form $dV$ with unit total volume the
canonical approximation $\varphi_{N}$ in question is defined by the
following integral formula:
\begin{equation}
\varphi_{N}(x):=\int\frac{1}{k}\log\left(\left\Vert \det S^{(k)}(x,x_{2},...,x_{N})\right\Vert ^{2}\right)dV^{\otimes N-1}-c_{N},\label{eq:def of phi N}
\end{equation}
where $c_{N}$ is the constant ensuring that the average of $\varphi_{N}$
on $(X,dV)$ vanishes: 
\[
c_{N}:=\int_{X^{N}}\frac{1}{k}\log\left(\left\Vert \det S^{(k)}(x,x_{2},...,x_{N})\right\Vert ^{2}\right)dV^{\otimes N}.
\]
For a given smooth function $u$ on $X$ denote by $\mathcal{Z}_{N}(\beta,h)$
the function on $\R^{2}$ obtained by replacing $dV$ in formula \ref{eq:def of Z N for L}
with $e^{hu}dV:$ 
\begin{equation}
\mathcal{Z}_{N}(\beta,h):=\int_{X^{N}}\left\Vert \det S^{(k)}\right\Vert ^{2\beta/k}(e^{hu}dV)^{\otimes N}.\label{eq:def of Z N beta h}
\end{equation}
 
\begin{thm}
\label{thm:calabi-yau}Let $(X,L)$ be a polarized manifold and $\left\Vert \cdot\right\Vert $a
metric on $L.$ Given a volume form $dV$ on $X$ with unit total
volume assume that 
\begin{equation}
\mathcal{Z}_{N}(\beta,h)\neq0\,\text{on some \ensuremath{N-}independent neighborhood \ensuremath{\Omega} of \ensuremath{(0,0)} in \ensuremath{\C^{2}}. }\label{eq:zero-free hypothesis cy}
\end{equation}
 for any smooth function $u$ on $X$ (where $\Omega$ depends on
$u).$ Then the functions $\varphi_{N},$ defined by formula \ref{eq:def of phi N},
converge in $L^{1}(X)$, as $N\rightarrow\infty,$ to the unique smooth
solution $\varphi$ of the Calabi-Yau equation \ref{eq:CY eq for dV}
satisfying $\int_{X}\varphi dV=0.$ 
\end{thm}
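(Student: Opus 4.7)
The plan is to express $\int_X u\,\varphi_N\,dV$ as a mixed second derivative of $\log\mathcal{Z}_N(\beta,h)$ at $(0,0)$, pass to the limit via a two-parameter extension of Theorem \ref{thm:mean energy for L}, identify the limit through the envelope theorem, and upgrade the resulting distributional convergence to $L^{1}$ convergence by compactness of quasi-plurisubharmonic functions.

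A direct differentiation of $\mathcal{Z}_N(\beta,h)=\int_{X^N}\left\Vert \det S^{(k)}\right\Vert ^{2\beta/k}e^{h\sum_i u(x_i)}\,dV^{\otimes N}$, combined with $\mathcal{Z}_N(0,0)=1$, Fubini, and the symmetry in $(x_1,\ldots,x_N)$, yields
\begin{equation*}
\frac{1}{N}\frac{\partial^{2}\log\mathcal{Z}_{N}(\beta,h)}{\partial\beta\,\partial h}\bigg|_{(0,0)}=\int_X u\,\varphi_N\,dV,
\end{equation*}
the centering constant $c_N$ cancelling in the product-rule expansion of the mixed partial.

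Next I would extend Theorem \ref{thm:mean energy for L} to two complex parameters. The zero-free hypothesis \ref{eq:zero-free hypothesis cy} makes $\mathcal{Z}_N(\beta,h)^{1/N}$ holomorphic on an $N$-independent neighborhood $\Omega\subset\C^{2}$ of $(0,0)$, while the Tian-type estimate \ref{eq:beta bigger than minus lct implies-1}, applied to the perturbed volume form $e^{hu}dV$ for small real $h$, gives $|\mathcal{Z}_N|^{1/N}\leq C$ on $\Omega$, so $\{\mathcal{Z}_N^{1/N}\}$ is a normal family. On $\{\beta>0\}$, the Varadhan form \ref{eq:LDP in terms of Phi} of the LDP of Theorem \ref{thm:beta pos}, applied to $\Phi(\mu)=-h\int u\,d\mu$, identifies the limit as $\exp(-G(\beta,h))$ with
\[
G(\beta,h)\;:=\;\inf_{\mu\in\mathcal{P}(X)}\Bigl(\beta E(\mu)+\text{Ent}_{dV}(\mu)-h\int u\,d\mu\Bigr).
\]
Since $G$ is real-analytic near $(0,0)$ by the implicit function theorem applied to the perturbed complex Monge--Amp\`ere equation (as in the proof of Theorem \ref{thm:zero-free}), the identity principle pins down every subsequential limit, yielding $C^{\infty}$ convergence of $-N^{-1}\log\mathcal{Z}_N$ to $G$ on a real neighborhood of $(0,0)$.

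The envelope theorem then gives $\partial_\beta G(\beta,h)=E(\mu_{\beta,h}^{*})$, and since $\mu_{0,h}^{*}=e^{hu}dV/Z(h)$, one obtains $\partial_h\mu_{\beta,h}^{*}|_{(0,0)}=(u-\bar u)\,dV$ with $\bar u:=\int u\,dV$. Using $dE(\mu)|_{\mu=dV}=-\varphi$ for the mean-zero Calabi--Yau solution $\varphi$ of \ref{eq:CY eq for dV} gives $\partial_\beta\partial_h G|_{(0,0)}=-\int u\,\varphi\,dV$, so the preceding steps yield $\int u\,\varphi_N\,dV\to\int u\,\varphi\,dV$ for every smooth $u$. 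For each fixed $(x_2,\ldots,x_N)$ the function $x\mapsto\tfrac{1}{k}\log\left\Vert \det S^{(k)}(x,x_2,\ldots,x_N)\right\Vert ^{2}$ is $\omega$-psh, hence so is the average $\varphi_N$; combined with $\int\varphi_N\,dV=0$, standard $L^{1}$-compactness of normalized $\omega$-psh functions places $\{\varphi_N\}$ in a relatively compact subset of $L^{1}(X,dV)$, and any limit point equals $\varphi$ by the distributional convergence, giving $L^{1}$ convergence of the full sequence. The main obstacle is the two-parameter $C^{\infty}$-convergence: while normal-family arguments in two complex variables are standard, one must carefully propagate the uniform bound from $\{\beta\geq 0\}$ into a complex bidisc around $(0,0)$ and identify the limit along the degenerate direction $\beta=0$, where the associated Monge--Amp\`ere equation collapses.
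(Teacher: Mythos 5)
Your proposal is correct and follows essentially the same route as the paper: the mixed second derivative identity \ref{eq:phi N u as derivative}, the two-parameter $C^{\infty}$-convergence of $-N^{-1}\log\mathcal{Z}_{N}(\beta,h)$ to the free energy obtained from the zero-free hypothesis together with the Tian-type bound and the LDP/identity-principle argument, the identification $\partial_{\beta}F(0,h)=E(dV_{h})$ and the first-variation formula \ref{eq:first var of E}, and finally the upgrade from weak to $L^{1}$ convergence via compactness of $\omega$-psh functions. The only difference is presentational: you spell out the normal-family and analytic-continuation details of the two-parameter extension that the paper delegates to Theorem \ref{thm:mean energy for L}, and the "degenerate direction $\beta=0$" you flag is harmless since the limit is identified on an open real set $\{0<\beta<\epsilon,\ |h|<\epsilon\}$ and extended by real-analyticity.
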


\begin{proof}
First observe that $\varphi_{N}(x)$ is $\omega-$psh, since it is
a superposition of the $\omega-$psh functions $\log\left(\left\Vert \det S^{(k)}(x,x_{2},...,x_{N})\right\Vert ^{2}\right).$
Hence, by standard properties of $\omega-$psh functions the $L^{1}-$convergence
in question is equivalent to weak convergence. In other words, it
is equivalent to proving that for any given smooth function $u\in C^{\infty}(X)$
\[
\lim_{N\rightarrow\infty}\int\varphi_{N}udV=\int\varphi dV.
\]
 Moreover, since the integrals on both side of the previous equality
vanish for $u=1$ it is enough to prove the convergence for any $u\in C^{\infty}(X)$
satisfying $\int udV=0.$ To this end fix such a function $u$ and
consider the corresponding partition functions $\mathcal{Z}_{N}(\beta,h),$
defined by formula \ref{eq:def of Z N beta h}. A direct calculation
reveals that

\begin{equation}
\int\varphi_{N}udV=\frac{\partial}{\partial h}\frac{\partial}{\partial\beta}N^{-1}\log\mathcal{Z}_{N}(\beta,h),\,\,\,\text{at \ensuremath{(\beta,h)=(0,0).} }\label{eq:phi N u as derivative}
\end{equation}
By assumption there exists a neighborhood $\Omega$ of $(0,0)$ in
$\C^{2}$ where $\log\mathcal{Z}_{N}(\beta,h)$ is holomorphic. Moreover,
by Theorem \ref{thm:mean energy for L} 
\[
-N^{-1}\log\mathcal{Z}_{N}(\beta,h)\rightarrow F(\beta,h):=\inf_{\mathcal{\mu\in P}(X)}\left(\beta E(\mu)-h\int_{X}udV+\text{Ent}(\mu)\right),
\]
 in the $C_{loc}^{\infty}-$topology on $\Omega,$ where $\text{Ent}(\mu)$
denotes the entropy of $\mu$ relative to $dV.$ In particular, the
convergence of the second derivatives at $(0,0)$ yields, by formula
\ref{eq:phi N u as derivative}, 
\[
\lim_{N\rightarrow\infty}\int\varphi_{N}udV=-\frac{\partial}{\partial h}\frac{\partial F(\beta,h)}{\partial\beta}\,\,\,\text{at}\,\ensuremath{(\beta,h)=(0,0).}
\]
Since $dV$ is the unique minimizer of $F_{\beta}$ when $\beta=0,$
\[
\frac{\partial F(\beta,h)}{\partial\beta}=E(dV_{h}),\,\,\,\,dV_{h}:=dVe^{hu}/\int_{X}dVe^{hu}.
\]
 The proof is thus concluded by invoking the property \ref{eq:first var of E}
of the functional $E,$which gives 
\[
-\frac{E(dV_{h})}{\partial h}_{|u=0}=\int_{X}\varphi udV.
\]
\end{proof}
In the particular case when $X$ is a Calabi-Yau manifold - i.e. when
some power of $K_{X}$ is trivial - we can apply the previous theorem
to the canonical normalized volume form $dV$ on $X,$ 
\[
dV:=\frac{\left(s_{m}\wedge\overline{s}_{m}\right)^{1/m}}{\int_{X}\left(s_{m}\wedge\overline{s}_{m}\right)^{1/m}},
\]
 where $s_{m}$ trivializes $mK_{X}$ for some positive integer $m.$
Then the corresponding convergence implies that the positive $(1,1)-$currents
\[
\omega_{N}:=\frac{i}{2\pi k}\int\partial\bar{\partial}\log\left(\left|\det S^{(k)}(\cdot,x_{2},...,x_{N})\right|^{2}\right)dV^{\otimes N-1}
\]
 converge weakly towards the unique Calabi-Yau metric $\omega_{CY}$
on $X$ in $c_{1}(L),$ i.e. towards the unique Ricci flat Kähler
metric in $c_{1}(L).$ Note that, by the Poincaré-Lelong formula,
$\omega_{N}$ is the average over $X^{N-1}$ of the currents of integration
defined by the zero-loci in $X$ of the holomorphic sections $\det S^{(k)}(\cdot,x_{2},...,x_{N}).$ 
\begin{rem}
\label{rem:Lee-Yang with h}It seems natural to expect that the zero-free
hypothesis \ref{eq:zero-free hypothesis cy} is always satisfied.
Indeed, it can be viewed as a strengthening of the real-analyticity
of the free energy $F(\beta,h)$ in some neighborhood of $(0,0)$
in $\C^{2}$(discussed in the proof of the previous theorem). This
expectation is in line with corresponding expectations in the Lee-Yang
theory of phase transitions \cite{y-l}, where the the role of $\beta$
and $h/\beta$ is played by the inverse temperature and the field
strength, respectively (see the discussion in the introduction of
\cite{k-f}).
\end{rem}

When $X$ is a compact complex curve, i.e. $n=1,$ the convergence
in Theorem \ref{thm:mean energy for L} and Theorem \ref{thm:calabi-yau}
can, unconditionally, be deduced from the bosonization formula for
$\det S^{(k)}(x_{1},...,x_{N}).$ \cite{a-g-b--} To the leading order
this formula expresses $\left\Vert \det S^{(k)}(x,x_{2},...,x_{N})\right\Vert $
as a product of $G(x_{i},x_{j}),$ where $G$ is the Green's function
for the Laplacian $i\partial\bar{\partial}$ (see Lemma \ref{lem:standard id}
for the case when $X=\P_{\C}^{1}).$ 

\section{\label{sec:A-case-study:}The case of log Fano curves }

Let $X$ be  the complex projective line $\P_{\C}^{1}.$ Fix an $\R-$divisor
$\Delta$ on $X,$ i.e.
\[
\Delta:=\sum_{1=1}^{m}p_{i}w_{i}
\]
 for given points $p_{1},...,p_{m}$ on $X$ and with real coefficients/weights
$w_{i}$ and assume that 
\[
w_{i}<1.
\]
 In contrast to Section \ref{subsec:Main-new-results} we thus allow
$w_{i}$ to be negative. Assume that $(X,\Delta)$ is a log Fano manifold,
i.e. the anti-canonical line bundle of $(X,\Delta)$ is positive:

\[
L:=-(K_{X}+\Delta)>0
\]
Since $X$ is a complex curve the assumption that $L$ is positive
simply means that its degree $d_{L}$ is positive:

\begin{equation}
d_{L}=2-\sum w_{i}>0\label{eq:def of d L}
\end{equation}
Given a positive real number $k$ such that and assume that $kL$
defines a line bundle, i.e. $kd_{L}$ is an integer set 
\[
N_{k}:=\dim H^{0}(X,kL).
\]
To the log Fano curve $(X,\Delta)$ we attach (as in the beginning
of Section \ref{sec:Conditional-convergence-results}) the following
symmetric probability measure on $X^{N_{k}},$ 
\[
\mu_{\Delta}^{(N_{k})}=\frac{1}{\mathcal{Z}_{N_{k}}}\left|\det S^{(k)}(z_{1},...,z_{N})\right|^{-2/k}|s_{\Delta}|^{-2}(z_{1})\cdots|s_{\Delta}|^{-2}(z_{N_{k}}),
\]
which is well-defined precisely when $\mathcal{Z}_{N_{k}}<\infty.$
The following result implies Theorem \ref{thm:log curve intro} (concerning
the case when $w_{i}>0)$: 
\begin{thm}
\label{thm:log Fano curve text}Let $(X,\Delta)$ be a log Fano curve.
Then the following is equivalent:
\begin{itemize}
\item $\mathcal{Z}_{N_{k}}<\infty$ for $k$ sufficiently large 
\item The following weight condition holds: 
\begin{equation}
w_{i}<\sum_{i\neq j}w_{j},\,\,\,\forall i\label{eq:weight condition}
\end{equation}
 
\end{itemize}
Moreover, if any of the conditions above hold then the law of the
empirical measure $\delta_{N}$ on $(X^{N_{k}},\mu_{\Delta}^{(N_{k})})$
satisfies a LDP with speed $N$ and rate functional $F_{-1}-\inf_{\mathcal{P}(X)}F_{-1}$
(where $F_{-1}$ is the free energy functional on $\mathcal{P}(X)$
defined in Section \ref{subsec:The-case beta neg}, which coincides
with the Mabuchi functional for $(X,\Delta)$). 

\end{thm}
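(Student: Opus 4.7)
The plan is to split the proof into two parts: first, the equivalence of the two displayed conditions is reduced to a log canonical threshold calculation on the product $X^{N_k}$; second, the LDP is obtained by invoking the strengthened form of Theorem \ref{thm:1 implies 2 implies 3} (the version with an arbitrary continuous perturbation $\Phi$), after verifying the ``upper bound hypothesis'' on the mean energy using one-dimensional tools.

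For the equivalence I would make the lct computation on $X^{N_k}$ concrete. On $X=\P^1$, the line bundle $kL$ is isomorphic to $\mathcal{O}(kd_L)$, and identifying sections of $kL$ with polynomials via the tautological section $s_\Delta^k$, a natural basis of $H^0(X,kL)$ is provided by the monomials $z^{i-1}$ for $i=1,\ldots,N_k=kd_L+1$, so that the (normalized) Slater determinant is the Vandermonde $\prod_{a<b}(z_a-z_b)$. A direct calculation of the density of $\mu_\Delta^{(N_k)}$ then shows that, near a cluster of $r$ particles $z_1,\ldots,z_r\to p_\alpha$, it takes the form
\[
\prod_{a\le r}|z_a-p_\alpha|^{-2w_\alpha}\,\prod_{a<b\le r}|z_a-z_b|^{-2/k}\cdot(\text{smooth}).
\]
Rescaling $z_a=\varepsilon u_a$ reduces local integrability to the elementary inequality $(r-1)/(2k)<1-w_\alpha$, which must hold up to the maximal admissible cluster size. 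A dimension count on $V_k=H^0(X,-k(K_X+\Delta))$ shows that the maximal cluster at $p_\alpha$ contributing non-trivially (through the Wronskian of the basis) has size $r=N_k=kd_L+1$, so the binding constraint in the limit $k\to\infty$ becomes $d_L/2<1-w_\alpha$, equivalent via $d_L=2-\sum_j w_j$ to the weight condition $w_\alpha<\sum_{j\ne\alpha}w_j$. Running the same scaling with a strict margin $\varepsilon>0$ gives $\mathrm{lct}(\mathcal{D}_{N_k})>1+\varepsilon$ uniformly in $k$, so the weight condition is actually equivalent to \emph{uniform} Gibbs stability.

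Under uniform Gibbs stability the log pair $(X,\Delta)$ admits a unique K\"ahler--Einstein metric $\omega_{KE}$ as in the proof of Theorem \ref{thm:1 implies 2 implies 3}, and the free energy $F_{-1}$ is coercive with respect to the pluricomplex energy $E$. By the strengthened form of Theorem \ref{thm:1 implies 2 implies 3}, the LDP now reduces to showing, for every continuous $\Phi$ on $\mathcal{P}(X)$ and every subsequential weak limit $\Gamma_{-1}$ of the laws $\Gamma_{N,-1}$,
\[
\limsup_{N\to\infty}\int_{X^N}\bigl(E^{(N)}+\Phi(\delta_N)\bigr)\,\mu_{\Delta,-1}^{(N)}\le\int_{\mathcal{P}(X)}\bigl(E(\mu)+\Phi(\mu)\bigr)\,\Gamma_{-1}.
\]
In one complex dimension the pluricomplex energy $E(\mu)$ reduces to the classical logarithmic energy, and a bosonization-type identity (as used in the remarks following Theorem \ref{thm:calabi-yau}) expresses $\log|\det S^{(k)}|^2$, up to lower-order terms governed by the Green's function of the Laplacian on $\P^1$, as the Vandermonde sum $\sum_{a<b}\log|z_a-z_b|^2$. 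Combining this with the sharp logarithmic Moser--Trudinger/Onofri inequality on $\P^1$ and the uniform Gibbs stability bound on the density of $\mu_{\Delta,-1}^{(N)}$ produces an effective sub-mean value property on the $N$-fold symmetric product of $X$, by the template of the proof of Theorem \ref{thm:beta pos} in \cite{berm8,berm8 comma 5} run with the sign of $\beta$ reversed, which yields the required upper bound.

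The main obstacle is precisely this verification of the upper bound hypothesis in its $\Phi$-version at $\beta=-1$. The Gibbs measure $\mu_{\Delta,-1}^{(N)}$ concentrates on configurations with particles clustering at the marked points $p_i$, and one must rule out that this concentration inflates the mean energy beyond the pluricomplex energy of the limiting law. What ultimately makes this tractable in dimension one is the sharpness of the Moser--Trudinger/Onofri inequality on $\P^1$, which matches the coercivity constant of $F_{-1}$, together with the fully explicit Vandermonde structure of $\det S^{(k)}$ that reduces the analysis to classical Green's-function computations.
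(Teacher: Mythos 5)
Your reduction of the equivalence to a collision/scaling analysis is essentially sound in spirit: the exponent count $(r-1)/(2k)<1-w_\alpha$ with the extreme cluster $r=N_k$ reproduces exactly the threshold $d_L/2<1-w_\alpha$, i.e.\ the weight condition, and for the divergence direction a lower bound near the total cluster at a marked point is indeed how the paper argues. But note two caveats: (i) single-cluster scalings only bound the log canonical threshold from above, so your claim that the weight condition yields $\mathrm{lct}(\mathcal{D}_{N_k})>1+\epsilon$ uniformly in $k$ (and hence finiteness of $\mathcal{Z}_{N_k}$) is asserted rather than proved — a rigorous integrability proof must control all collision strata, which the paper does not attempt directly; instead it rewrites $\mu_\Delta^{(N)}$ (Lemma \ref{lem:standard id}) as a two-dimensional Coulomb gas with pair interaction $\frac{d_L}{N-1}G$ and background measure $e^{\sum_i w_iG(\cdot,p_i)}dV_X$, and gets finiteness for all $N$ from the one-body integrability condition via the arithmetic-geometric means inequality built into Theorem \ref{thm:LDP two-point interaction}; (ii) the binding constraint is exact for every $k$, not only ``in the limit $k\to\infty$''.

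The genuine gap is in the LDP part. You reduce to the $\Phi$-strengthened upper bound hypothesis at $\beta=-1$ and then propose to verify it by ``running the submean-value/bosonization argument of \cite{berm8} with the sign of $\beta$ reversed'', aided by the Onofri inequality. This is precisely the step that does not follow by reversing signs: for $\beta<0$ the density of the Gibbs measure blows up along the collision divisor, the effective submean-value property used for $\beta>0$ breaks down, and the Moser--Trudinger/Onofri inequality (which is relevant to coercivity of $F_{-1}$, not to the mean-energy bound) does not substitute for it — indeed the entire point of Section \ref{sec:Conditional-convergence-results} is that the case $\beta<0$ is conditional on exactly this hypothesis. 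What actually closes the argument in dimension one, and what is missing from your proposal, is the mechanism behind Theorem \ref{thm:LDP two-point interaction} from \cite{berm10}: a refinement of the AM--GM bound gives a priori estimates showing that the $j$-point correlation densities of $\mu_{\Delta,-1}^{(N)}$ are uniformly bounded in $L^p(\mu_0^{\otimes j})$ for some $p>1$; applying this for $j\le 2$ yields the upper bound on the mean (two-body) energy along any limit point of the laws, and a twist of the same estimate gives the $\Phi$-version, hence the LDP, with the rate functional identified with $F_{-1}$ via $\int W\,\mu\otimes\mu=E(\mu)+C$. Your final paragraph names this verification as ``the main obstacle'' but only asserts its tractability; as written the proposal does not contain a proof of the LDP.
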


\begin{rem}
In particular, if the weight condition above holds then $F_{-1}$
is lsc on $\mathcal{P}(X)$ (since, in general, any rate functional
for a LDP is lsc) and thus admits a minimizer. The existence of a
minimizer was first shown in \cite{tr} using a different variational
argument. By the general results for log Fano varieties $(X,\Delta)$
in \cite{bbegz} any minimizer satisfies the Kähler-Einstein equation
for $(X,\Delta).$ In general, a solution is not uniquely determined
(see \cite[Remark 2]{l-t}). However, when $w_{i}>0$ the uniqueness
in the case of the Riemann sphere was shown in \cite{l-t} (see \cite{c-d-s,bbegz}
for the general higher dimensional log Fano case). 
\end{rem}

To prove the previous theorem we first recall some standard identifications
(see \cite[Section 3.7]{berm10b}). Fixing a point $p_{\infty}$ we
identify $X-\{p_{\infty}\}$ with $\C.$ The point $p_{\infty}$ induces
a trivialization $e_{\infty}$ of the restriction of the hyperplane
line bundle $\mathcal{O}(1)\rightarrow\P_{\C}^{1}$ to $\C$ (vanishing
at $p_{\infty})$ and thus the space $H^{0}(X,d\mathcal{O}(1))$ of
all global holomorphic sections of the $d$th tensor power of the
hyperplane line bundle $\mathcal{O}(1)\rightarrow X$ may be identified
with the space of all polynomials in $z$ of degree at most $d.$
Moreover, the anti-canonical line bundle $-K_{X}$ of $X$ may be
identified with $2\mathcal{O}(1)$ and $s_{\Delta}$ with a (multivalued)
holomorphic section of $\sum w_{i}\mathcal{O}(1).$ In particular,
we identify 
\[
kL\longleftrightarrow kd_{L}\mathcal{O}(1)=k\left(2-\sum_{i=1}^{m}w_{i}\right)\mathcal{O}(1),
\]
 (recall that we are assuming that $kd_{L}$is an integer). Thus $H^{0}(X,kL)$
gets identified with the space of all polynomials in $z$ of degree
at most $k\left(2-\sum_{i=1}^{m}w_{i}\right).$ This identification
reveals that 
\begin{equation}
N_{k}=kd_{L}+1\label{eq:N k in terms of d L}
\end{equation}
Fix the standard basis of monomials $1,z,z^{2},...$ in $H^{0}(X,kL).$
Then the corresponding section $\det S^{(k)}$ over $X^{N_{k}}$ gets
identified with the usual Vandermonde determinant on $\C^{N_{k}}$:
\begin{equation}
\det S^{(k)}\longleftrightarrow D(z_{1},...,z_{N_{k}}):=\det_{i,j\leq N_{k}}(z_{i}^{j})\label{eq:def of D Vanderm}
\end{equation}
Next, we identify $X$ with the unit-sphere $S^{2}$ in $\R^{3},$
using the standard stereographic projection, so that the fixed point
$p_{\infty}\in X$ corresponds to the ``north-pole'' $(0,0,1)$
in $S^{2}:$

\[
z\mapsto x:=\left(\frac{z+\bar{z}}{1+|z|^{2}},\frac{z-\bar{z}}{1+|z|^{2}},\frac{-1+|z|^{2}}{1+|z|^{2}}\right),\,\,\,\C\rightarrow\R^{3}
\]
Denote by $dV_{X}$ the area form of the standard round metric on
$S^{2}$ and by $G$ the following lsc function on $X$ 
\[
G(x,y):=-\log\left\Vert x-y\right\Vert ,
\]
expressed in terms of the Euclidean norm on $\R^{3}.$
\begin{lem}
\label{lem:standard id}In terms of the standard identifications over
$\C$ 
\[
\left|\det S^{(k)}(z_{1},...,z_{N})\right|^{-2/k}|s_{\Delta}|^{-2}(z_{1})\cdots|s_{\Delta}|^{-2}(z_{N_{k}})=\frac{1}{\left(\prod_{i\neq j}\left|z_{i}-z_{j}\right|\right)^{\frac{d_{L}}{N-1}}}\frac{1}{\prod_{i}}\frac{1}{\left|z_{i}-p_{j}\right|^{2w_{j}}}
\]
(where $d_{L}$ is defined in formula \ref{eq:def of d L}). As a
consequence, on $X:=\P_{\C}^{1}$ the probability measure $\mu_{\Delta}^{(N)}$
may be expressed as 
\begin{equation}
\mu_{\Delta}^{(N)}=\frac{1}{Z_{N}}e^{\frac{d_{L}}{N-1}\sum_{i\neq j\leq N}G(x_{i},x_{j})}dV^{\otimes N},\,\,\,dV:=e^{\sum_{i\leq m}w_{i}G(x,p_{i})}dV_{X}\label{eq:form mu N delta in lemma}
\end{equation}
\end{lem}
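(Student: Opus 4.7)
I would establish both assertions by a direct calculation in the standard trivialization over $\C$, using the identifications recorded immediately before the lemma, and then convert the resulting coordinate density into an intrinsic measure on $X^{N}$ via the classical stereographic identity.

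For the first identity: under the monomial basis $1,z,\ldots,z^{N_{k}-1}$ of $H^{0}(X,-k(K_{X}+\Delta))$, the Slater determinant $\det S^{(k)}$ becomes the Vandermonde polynomial $D(z_{1},\ldots,z_{N})$ from \eqref{eq:def of D Vanderm}, while $s_{\Delta}$ becomes the multivalued expression $\prod_{j}(z-p_{j})^{w_{j}}$. Combining the elementary relation $|D|^{2}=\prod_{i\neq j}|z_{i}-z_{j}|$ with the identity $\tfrac{1}{k}=\tfrac{d_{L}}{N-1}$ (which is \eqref{eq:N k in terms of d L} rewritten) produces $|\det S^{(k)}|^{-2/k}=\bigl(\prod_{i\neq j}|z_{i}-z_{j}|\bigr)^{-d_{L}/(N-1)}$, while $\prod_{i}|s_{\Delta}|^{-2}(z_{i})=\prod_{i}\prod_{j}|z_{i}-p_{j}|^{-2w_{j}}$ is immediate. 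Multiplying these yields the first claim essentially by inspection.

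For the consequence, the key input is the classical stereographic identity
\[
\|x-y\|^{2}\;=\;\frac{4\,|z-w|^{2}}{(1+|z|^{2})(1+|w|^{2})},
\]
which follows from the explicit stereographic formula recorded just above the lemma. Taking logarithms rewrites each $-\log|z-w|$ as $G(x,y)-\tfrac{1}{2}\log(1+|z|^{2})-\tfrac{1}{2}\log(1+|w|^{2})+\log 2$. Applying this identity systematically to every factor $|z_{i}-z_{j}|$ and every factor $|z_{i}-p_{j}|$ in the density produces, in the exponent, the desired Green-function contributions collecting into $\tfrac{d_{L}}{N-1}\sum_{i\neq j}G(x_{i},x_{j})+\sum_{i,j}w_{j}G(x_{i},p_{j})$ (up to an overall scalar) together with an unwanted coefficient of $\sum_{i}\log(1+|z_{i}|^{2})$ equal to $-(d_{L}+\sum_{j}w_{j})$.

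The decisive step is the cancellation of this coefficient. By the definition \eqref{eq:def of d L} of $d_{L}$ on a log Fano curve, $d_{L}+\sum_{j}w_{j}=2$, so the unwanted coefficient is $-2$. On the other hand, the change of measure from Lebesgue $d\lambda^{\otimes N}$ on $\C^{N}$ to the round product measure $dV_{X}^{\otimes N}$ contributes the offsetting factor $\prod_{i}(1+|z_{i}|^{2})^{2}$, since stereographic projection gives $d\lambda=\tfrac{1}{4}(1+|z|^{2})^{2}dV_{X}$ on $\C$; so these two contributions cancel precisely. Absorbing the remaining multiplicative constants (from the $\log 2$ and $\log(1+|p_{j}|^{2})$ terms, all independent of the $z_{i}$'s) into the normalization $Z_{N}$ leaves only the Green-function terms and produces the stated formula with $dV:=e^{\sum_{j}w_{j}G(x,p_{j})}dV_{X}$. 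The only nontrivial point in the argument — and essentially the main obstacle — is the bookkeeping of the $\log(1+|z|^{2})$ corrections and the observation that the exact cancellation rests precisely on the log Fano degree identity $d_L+\sum_j w_j=2$.
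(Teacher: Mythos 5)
Your proof is correct, and for the first identity it coincides with the paper's argument (Vandermonde factorization, $|D|^{2}=\prod_{i\neq j}|z_{i}-z_{j}|$, and $k=(N-1)/d_{L}$ from \ref{eq:N k in terms of d L}). For the second identity you take a more hands-on route: you expand everything in the affine chart, rewrite each $-\log|z_{i}-z_{j}|$ and $-\log|z_{i}-p_{j}|$ via the chordal-distance identity, and balance the resulting powers of $(1+|z_{i}|^{2})$ against the Jacobian $d\lambda=\tfrac{1}{4}(1+|z|^{2})^{2}dV_{X}$, the cancellation resting on $d_{L}+\sum_{j}w_{j}=2$. The paper instead invokes the intrinsic expression \ref{eq:mu N Delta in terms of metric} for $\mu_{\Delta}^{(N)}$ with $\left\Vert \cdot\right\Vert$ the Fubini--Study metric on $L\simeq d_{L}\mathcal{O}(1)$, reducing the claim to two facts: that $|z-w|^{2}e^{-\phi_{FS}(z)}e^{-\phi_{FS}(w)}$ is proportional to the squared chordal distance and that $e^{-2\phi_{FS}}$ is proportional to the density of $dV_{X}$. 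In that packaging your bookkeeping is automatic, because the total $\phi_{FS}$-weight is fixed by the identification $L=-(K_{X}+\Delta)$, i.e.\ by the definition \ref{eq:def of d L} of $d_{L}$ --- which is precisely the identity $d_{L}+\sum_{j}w_{j}=2$ you isolate. So both proofs rest on the same two geometric inputs; yours makes the degree cancellation explicit (and shows exactly where log Fano positivity of degree enters), while the paper's intrinsic formulation makes it invisible.

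One normalization caveat: with $G(x,y)=-\log\|x-y\|$ as defined before the lemma and divisor factors $|z_{i}-p_{j}|^{-2w_{j}}$ in the density, your substitution actually produces $2\sum_{i,j}w_{j}G(x_{i},p_{j})$ in the exponent (this is consistent with the correction coefficient $-\sum_{j}w_{j}$ per variable that you do record), i.e.\ $dV$ has density $\prod_{j}\|x-p_{j}\|^{-2w_{j}}$ against $dV_{X}$, a factor of two off the displayed $dV$ in \ref{eq:form mu N delta in lemma}. This matches how $dV$ is actually used later in the paper (where the density appears as $e^{-\sum_{i}w_{i}\log\|x-p_{i}\|^{2}}$), so it reflects a notational inconsistency in the source rather than a gap in your argument; just fix one convention for $G$ (with or without the square of the norm) and apply it consistently to both the two-point and the one-point terms.
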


\begin{proof}
First, factorizing the Vandermonde determinant $D(z_{1},...,z_{N_{k}})$
on $\C^{N}$ reveals that $D(z_{1},...,z_{N_{k}})$ is the product
of $(z_{i}-z_{j})$ over all $i,j$ in $\{1,...,N\}$ such that $i<j.$
Hence, 

\begin{equation}
\left|D(z_{1},...,z_{N_{k}})\right|^{2}=\prod_{i\neq j}\left|z_{i}-z_{j}\right|\label{eq:D N factorized}
\end{equation}
Since $N_{k}=kd_{L}+1,$ we have that $k=(N-1)/d_{L}$ and hence the
first formula of the lemma follows. To prove the second one first
recall that in the general setting of log Fano manifolds $(X,\Delta)$
the measure $\mu_{\Delta}^{(N)}$ may be expressed as in formula \ref{eq:mu N Delta in terms of metric}.
In the present case we take $\left\Vert \cdot\right\Vert $ to be
the metric on $L$ induced from the Fubini-Study metric $\left\Vert \cdot\right\Vert _{FS}$
on $\mathcal{O}(1)$ under the identification of $L$ with $d_{L}\mathcal{O}(1).$
Recall that
\[
\left\Vert e_{\infty}\right\Vert _{FS}^{2}=e^{-\phi_{FS}(z)},\,\,\,\phi_{FS}(z):=\log(1+|z|^{2})
\]
Hence formula \ref{eq:form mu N delta in lemma} follows from the
following two facts. First, 
\begin{equation}
\left\Vert z-w\right\Vert _{FS}^{2}:=\left|z-w\right|^{2}e^{-\phi_{FS}(z)}e^{-\phi_{FS}(w)}\label{eq:FS}
\end{equation}
 is proportional to the squared norm in $\R^{3}$ under stereographic
projection and secondly

\[
\left\Vert dz\right\Vert _{FS}^{2}:=\left\Vert e_{\infty}^{\otimes2}\right\Vert _{FS}^{2}:=e^{-2\phi_{FS}}
\]
is proportional to the density of $dV_{X}.$ These are well-known
relations that can be checked explicitly, but they also follow readily
from their invariance under the isometry group of $S^{2}.$
\end{proof}
Next, we recall the following general LDP \cite[Thm 1.5]{berm10},
generalizing the convergence in probability established in \cite{clmp,k}
for the point-vortex model in a planar compact domain. Given a symmetric
function $W$ on a compact metric space $X,$ a measure $\mu_{0}$
on $X$ and $p\in\R$ set 

\[
\mu^{(N)}[p]=\frac{1}{Z_{N[p]}}e^{-p\frac{1}{N}\sum_{x_{i}\neq x_{j}}W(x_{i},x_{j})}\mu_{0}^{\otimes N},\,\,\,Z_{N}[p]:=\int_{X^{N}}e^{-p\frac{1}{N}\sum_{x_{i}\neq x_{j}}W(x_{i},x_{j})}\mu_{0}^{\otimes N},
\]
 assuming that $Z_{N}[p]<\infty.$ 
\begin{thm}
\label{thm:LDP two-point interaction}Let $X$ be a compact metric
space, $\mu_{0}$ a measure on $X$ and $W$ a lower semi-continuous
symmetric measurable function on $X^{2}$ and $p_{0}$ a negative
number such that
\begin{equation}
\sup_{x\in X}\int_{X}e^{-p_{0}W(x,y)}\mu_{0}(y)<\infty\label{eq:integr condition}
\end{equation}
Then, for any $p>p_{0}$ the normalizing constant $Z_{N}[p]$ is finite
and the law of the empirical measure $\delta_{N}$ on $(X^{N},\mu^{(N)}[p])$
satisfies a LDP with a rate functional 
\[
F_{p}-\inf_{\mathcal{P}(X)}F_{p},\,\,\,\,F_{p}(\mu):=p\int_{X\times X}W\mu\otimes\mu+\text{Ent}_{\mu_{0}}(\mu)
\]
\end{thm}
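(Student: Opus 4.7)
My plan is to derive the LDP from Sanov's theorem by a Varadhan-type tilting argument, with the integrability hypothesis serving as the moment/exponential-tightness input that tames the lower semicontinuous (possibly unbounded) interaction $W$. Set $E_W(\mu):=\int_{X\times X}W\,d\mu\otimes\mu$, which is lsc on $\mathcal{P}(X)$ by Fatou since $W$ is lsc and bounded below on the compact space $X\times X$. The elementary identity
\[
\tfrac{1}{N}\sum_{i\neq j}W(x_i,x_j)=N\,E_W(\delta_N)-\tfrac{1}{N}\sum_i W(x_i,x_i)
\]
shows that the Gibbs weight equals $\exp\bigl(-p N\,E_W(\delta_N)\bigr)$ up to a diagonal term of order $o(N)$ on the exponential scale, so that, up to a negligible correction, $\mu^{(N)}[p]$ is a clean Varadhan tilt of $\mu_0^{\otimes N}$ by the continuous-in-$N$ functional $E_W$ of the empirical measure.

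First I would check $Z_N[p]<\infty$ and deduce a crude bound $Z_N[p]\leq e^{CN}$. Since $W$ is bounded below we have $e^{-pW}\leq C(p,p_0)\,e^{-p_0 W}$, and H\"older's inequality applied to the product expansion $\exp(-\tfrac{p}{N}\sum_{i\neq j}W)=\prod_{i<j}\exp(-\tfrac{2p}{N}W(x_i,x_j))$ reduces the integral to a product of pair integrals of the form $\int e^{-c W(x,y)}\mu_0(dx)\mu_0(dy)$, which are controlled by the hypothesis as soon as $c$ lies above $p_0$. This bound also delivers the exponential tightness needed to promote local LDP estimates to a global one on $\mathcal{P}(X)$.

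For the LDP upper bound I would approximate $W$ monotonically from below by bounded continuous symmetric functions $W_m$; for each $W_m$ the energy $E_{W_m}$ is weakly continuous on $\mathcal{P}(X)$, so Sanov's theorem for $\mu_0^{\otimes N}$ (normalized to a probability) combined with the standard Varadhan lemma yields an LDP with rate $p E_{W_m}+\text{Ent}_{\mu_0}$. Letting $m\to\infty$ and invoking monotone convergence $E_{W_m}\uparrow E_W$ gives the upper bound with the rate $F_p$. For the lower bound, given any $\mu^*\in\mathcal{P}(X)$ with $F_p(\mu^*)<\infty$, I would mollify $\mu^*$ into $\mu^*_\epsilon$ with bounded density so that $E_W(\mu^*_\epsilon)\to E_W(\mu^*)$ and $\text{Ent}_{\mu_0}(\mu^*_\epsilon)\to\text{Ent}_{\mu_0}(\mu^*)$, and then combine the Sanov lower bound on a tubular neighborhood of $\mu^*_\epsilon$ with continuity of $E_W$ along this regularized family. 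The main obstacle is precisely this lower-bound matching step when $W$ has genuine singularities (e.g.\ $W=+\infty$ on the diagonal, as in the log-gas setting of Lemma \ref{lem:standard id}): one must rule out atomic concentrations and establish continuity of $E_W$ along a suitable approximating family. The integrability assumption with $p_0<0$ is what makes this regularization robust, and it is simultaneously what supplies the exponential tightness converting the local bounds into a full LDP on $\mathcal{P}(X)$.
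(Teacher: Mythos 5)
The central gap is that your argument only covers the repulsive regime $p\geq0$, whereas the substance of the theorem is the attractive range $p_{0}<p<0$ (this is exactly what is needed at $\beta=-1$ in the application, where $W(x,y)=-d_{L}\log\left\Vert x-y\right\Vert$ is $+\infty$ on the diagonal). Two steps break there. First, your starting identity is vacuous in the singular case: $E_{W}(\delta_{N})\equiv+\infty$ for every configuration, so the "diagonal term" is not $o(N)$ but infinite, and the Gibbs weight is not a Varadhan tilt by $E_{W}(\delta_{N})$ in any usable sense; moreover for $p<0$ the tilting functional $-pE_{W}=|p|E_{W}$ is lower semicontinuous, not upper semicontinuous, which is the wrong regularity for Varadhan's upper bound. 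Second, and decisively, your LDP upper bound via monotone truncation $W_{m}\uparrow W$ relies on dominating the true Gibbs weight by the truncated one, which holds only for $p\geq0$; when $p<0$ one has $e^{-\frac{p}{N}\sum_{i\neq j}W}\geq e^{-\frac{p}{N}\sum_{i\neq j}W_{m}}$, so the comparison goes the wrong way and letting $m\to\infty$ yields nothing. In the attractive case the roles are reversed: the \emph{lower} bound does follow from truncation (since $pE_{W_{m}}\downarrow pE_{W}$ by monotone convergence), while the \emph{upper} bound is the hard step and is precisely where the integrability condition \ref{eq:integr condition} must enter quantitatively. Exponential tightness cannot substitute for it: $X$ is compact, so $\mathcal{P}(X)$ is compact and tightness is automatic and carries no information. (A smaller point: applying H\"older over the $\binom{N}{2}$ pairs forces a pair-integral exponent of order $N$, outside the range controlled by \ref{eq:integr condition}; one must instead group the $N-1$ interaction terms attached to each particle and use $N$ H\"older factors, which is the arithmetic-geometric mean argument giving $Z_{N}[p]\leq C^{N}$.)

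What is missing is a mechanism, valid for $p<0$, that rules out energy concentration at the diagonal. The proof this paper points to (from \cite{berm10}, reformulated via Theorem \ref{thm:1 implies 2 implies 3}) does this not through Sanov plus Varadhan but through the Gibbs variational principle and Gamma-convergence of the discrete energy: the refinement of the H\"older/AM-GM argument gives uniform $L^{q}$ bounds ($q>1$) on the $j$-point correlation densities of $\mu^{(N)}[p]$, and applying these with $j\leq2$ verifies the "upper bound hypothesis" \ref{eq:upper bound property} on the mean energy; a twist of the same estimate, with an added continuous functional $\Phi(\delta_{N})$, upgrades this to the full LDP. To repair your proposal you would need to supply an ingredient of this kind (correlation bounds, or local moment-generating estimates for the interaction at negative $p$); the mollification of the target measure that you single out as the main obstacle is the standard difficulty for the repulsive lower bound, not the place where your argument actually fails.
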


\begin{proof}
It may be illuminating to reformulate the proof given in \cite{berm10}
in terms of the conditional convergence result in Theorem \ref{thm:1 implies 2 implies 3}.
First, the finiteness of $Z_{N}[p]$ follows readily from the arithmetic-geometric
means inequality, using the integrability condition \ref{eq:integr condition}.
A refinement of this argument also yields a priori estimates on each
$j-$point correlation measure on $X^{j},$ building on \cite[Section 3.2.4]{berm11},
showing that its density is uniformly bounded in $L^{p}(\mu_{0}^{\otimes j})$
for any $p>1.$ Applying this estimate to $j\leq2$ shows that the
``upper bound hypothesis'' \ref{eq:upper bound property} of the
energy is satisfied. A twist of this argument also yields the stronger
form of the upper bound hypothesis with respect to any given continuous
function $\Phi(\mu),$ as formulated in Theorem \ref{thm:1 implies 2 implies 3},
and thus also the LDP.
\end{proof}
In the present case we thus have 
\[
W(z,w)=-d_{L}\log\left\Vert z-w\right\Vert ,\,\,\,p=\beta\frac{N-1}{N}
\]
Moreover, 
\begin{equation}
\int_{X}W\mu\otimes\mu=E(\mu)+C\label{eq:energy on P one is integral W}
\end{equation}
 for some constant $C.$ Indeed, by a simple scaling argument it is
enough to consider the case when $d_{L}=1.$ Then we can write $W(x,y)=G(x,y)/2,$
where $G(x,y)=-\log(\left\Vert z-w\right\Vert ^{2})$ has the property
that $-\frac{i}{2\pi}\partial\bar{\partial}G(x,\cdot)=\delta_{x}-\omega_{0},$
where $\omega_{0}$ is the normalized curvature of the Fubini-Study
metric. Hence, the first variation of the functional $\mu\mapsto\int_{X}W\mu\otimes\mu$
on $\mathcal{P}(X)$ coincides with the first variation of $E(\mu)$
(formula \ref{eq:first var of E}), which proves formula \ref{eq:energy on P one is integral W}.

\subsection{\label{subsec:Conclusion-of-the}Conclusion of the proof of Theorem
\ref{thm:log Fano curve text} }

Set $p=-t$ and observe that 
\[
\int_{X}e^{-pW(x,y)}\mu_{0}(y)=\int_{X}e^{-\left(td_{L}\log\left\Vert x-y\right\Vert +\sum_{i}w_{i}\log\left\Vert x-p_{i}\right\Vert ^{2}\right)}dV_{X}
\]
For any given $y\in X$ the function $e^{-c\log\left\Vert x-y\right\Vert ^{2}}$
is locally integrable on $X$ iff $c<1.$ Hence, the right hand side
above is integrable iff for any fixed index $i$ 
\[
td_{L}/2+w_{i}<1,\,\,\,\forall i
\]
But this condition holds for some $t>1$ iff 
\[
d_{L}/2+w_{i}<1,\,\,\forall i
\]
 i.e. iff $1-\sum w_{j}/2+w_{i}<1$ for all $i,$ that is, $w_{i}<\sum_{j\neq i}w_{j},$
which is equivalent to the weight condition \ref{eq:weight condition}.
Hence, if the weight condition holds, then by Theorem \ref{thm:LDP two-point interaction},
the desired LDP follows.

Next, assume that the weight condition is violated. Without loss of
generality we may assume that it is violated for the index $i=1,$
which equivalently means that
\[
-d_{L}+2(1-w_{1})=0
\]
Set $B_{R}:=\{\left\Vert x-p_{1}\right\Vert \leq R\}.$ Since $e^{-\log\left\Vert x-y\right\Vert }\geq R^{-1}$
on $B_{R}$ we have 
\[
\int_{B_{R}^{N}}e^{W(x,y)}\mu_{0}(y)\geq(R^{-1})^{d_{L}N}\int_{B_{R}^{N}}\mu_{0}^{\otimes R}
\]
Using $\int_{|z|\leq R}e^{-w\log|z|^{2}}d(r^{2})\wedge d\theta=\frac{1}{1-w}(R^{2})^{1-w},$
we thus get
\[
\int_{B_{R}}\mu_{0}\geq\int e^{-\left(w_{1}\log\left\Vert x-p_{1}\right\Vert ^{2}\right)}dV_{X}\geq C(R^{2})^{(1-w_{1})}
\]
for some constant independent of $R.$ All in all, this means that
\[
\left(\int_{B_{R}^{N}}e^{W(x,y)}\mu_{0}(y)\right)^{1/N}\geq CR{}^{-d_{L}+2(1-w_{1})}\geq CR^{0}\geq C>0
\]
But the right hand side is independent of $R.$ Hence, letting $R\rightarrow0$
shows that the density $e^{W(x,y)}$ can not be in $L^{1}(X^{N}\mu_{0}^{\otimes N}),$
which means that $Z_{N,-1}=\infty,$ as desired. 

\subsection{The case of a general divisor $\Delta$}

Now consider the case of general coefficients $w_{i}\in]-\infty,1[.$
By the previous theorem $Z_{N,-1}$ diverges for large $N,$ unless
the weight condition \ref{eq:weight condition} holds. But fixing
any continuous metric $\left\Vert \cdot\right\Vert $ on $L$ we can
consider the corresponding probability measures $\mu_{\Delta,\beta}^{(N)},$
defined by formula \ref{eq:Gibbs meas}, which are well-defined when
$-\beta$ is sufficiently small. 
\begin{thm}
\label{thm:-P one with arb w}$\mathcal{Z}_{N}(\beta)<\infty$ iff
$\beta>-\gamma_{N}$ where 
\[
\gamma_{N}=\frac{N-1}{N}2\frac{1-\max_{i}w_{i}}{2-\sum_{i}w_{i}}
\]
Moreover, if $\mathcal{Z}_{N}(\beta)<\infty,$ then then the law of
the random variable $\delta_{N}$ on $(X^{N},\mu_{\Delta,\beta}^{(N)})$
satisfies a LDP with speed $N$ and rate functional $F_{\beta}-\inf_{\mathcal{P}(X)}F_{\beta}$
\end{thm}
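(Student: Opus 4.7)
The plan is to extend Theorem \ref{thm:log Fano curve text} to general $\beta$ and general weights $w_i \in ]-\infty, 1[$ by recycling the two-body-interaction LDP machinery. Generalizing Lemma \ref{lem:standard id} to track the $\beta$-dependence, one writes $\|\det S^{(k)}\|^{2\beta/k}$ in stereographic coordinates and absorbs the $\beta$-dependent Fubini--Study weight into a bounded smooth factor so as to exhibit
\[
\mu_{\Delta,\beta}^{(N)}\;=\;\frac{1}{\mathcal{Z}_{N}(\beta)}\,\exp\!\Bigl(\tfrac{\beta d_{L}}{N-1}\sum_{i\ne j}G(x_{i},x_{j})\Bigr)\,dV^{\otimes N},
\]
with $G(x,y)=-\log\|x-y\|$ the $\mathbb{R}^{3}$-chord log-potential on $S^{2}=\P^{1}$. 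Comparing with Theorem \ref{thm:LDP two-point interaction} identifies the interaction $W(x,y)=-d_{L}\log\|x-y\|$, the reference measure $\mu_{0}=dV$, and the parameter $p=\beta N/(N-1)$, placing the problem directly in that framework.

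For the sufficient direction and the LDP I check the integrability hypothesis \ref{eq:integr condition}. Exactly as in Section \ref{subsec:Conclusion-of-the}, the sup-integral
\[
\sup_{x\in X}\int_{X}\|x-y\|^{p_{0}d_{L}}\prod_{j}\|y-p_{j}\|^{-2w_{j}}\,dV_{X}(y)
\]
is finite iff, for every marked $p_{j_{0}}$, the compound singularity at $x=y=p_{j_{0}}$ is locally integrable, which is $p_{0}d_{L}-2w_{j_{0}}>-2$, i.e.\ $p_{0}>-2(1-\max_{i}w_{i})/d_{L}$. Substituting $p=\beta N/(N-1)$ produces the sufficient condition $\beta>-\gamma_{N}$, and Theorem \ref{thm:LDP two-point interaction} then delivers both finiteness of $\mathcal{Z}_{N}(\beta)$ and the full LDP with rate functional $F_{\beta}-\inf F_{\beta}$; the identification of the rate functional with $F_{\beta}$ goes through formula \ref{eq:energy on P one is integral W}, which identifies the pluricomplex energy on $\P^{1}$ with $\int W\,\mu\otimes\mu$ up to an additive constant.

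For the converse ($\beta\le-\gamma_{N}\Rightarrow\mathcal{Z}_{N}(\beta)=\infty$) I adapt the $B_{R}^{N}$-type lower bound of Section \ref{subsec:Conclusion-of-the}: put $B_{R}:=\{\|x-p_{j_{*}}\|\le R\}$ around the argmax $j_{*}$ of the weights, combine the crude pointwise bound $\|x_{i}-x_{j}\|^{\beta d_{L}/(N-1)}\ge(2R)^{\beta d_{L}/(N-1)}$ on $B_{R}\times B_{R}$ (valid since $\beta<0$) with $\mu_{0}(B_{R})\sim R^{2(1-w_{j_{*}})}$, and read off a lower bound of order $R^{N[\beta d_{L}+2(1-w_{j_{*}})]}$ on the $B_{R}^{N}$-integral; when this exponent fails to be strictly positive the absolute-continuity argument of Section \ref{subsec:Conclusion-of-the} forces divergence.

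I expect the sharpness of the necessary direction to be the main obstacle: the naive $B_{R}^{N}$ bound above only matches the threshold $\beta=-2(1-\max_{i}w_{i})/d_{L}$, which differs from the claimed $\beta=-\gamma_{N}$ by the extra factor $(N-1)/N$. Closing this gap appears to require a more refined cluster-absorption argument that successively integrates out one variable at a time against the pair potential, amplifying the effective singularity at $p_{j_{*}}$ so that the single-particle critical exponent $p_{0}^{*}=-2(1-\max_{i}w_{i})/d_{L}$ is promoted to the $N$-particle critical exponent that matches $\gamma_{N}$ after the change of variables $p=\beta N/(N-1)$. Once this sharp necessity is in hand, the remaining assertions of the theorem (finiteness, LDP, and the form of the rate functional) are immediate from the previous two steps.
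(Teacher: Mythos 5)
Your first half (finiteness for $\beta>-\gamma_{N}$ together with the LDP) is essentially the paper's own proof: one identifies $\mu_{\Delta,\beta}^{(N)}$ with $\mu^{(N)}[p]$ for $W(x,y)=-d_{L}\log\|x-y\|$ and $\mu_{0}=dV$, checks the single-particle hypothesis \ref{eq:integr condition}, applies Theorem \ref{thm:LDP two-point interaction}, and identifies the rate functional via formula \ref{eq:energy on P one is integral W}. Your conversion $p=\beta N/(N-1)$ is the correct one, and it is the one the stated formula for $\gamma_{N}$ actually corresponds to (the relation $p=\beta\frac{N-1}{N}$ appearing in the paper is a slip). Three small repairs: the exponent in your displayed Gibbs density should carry a minus sign, $\exp\bigl(-\tfrac{\beta d_{L}}{N-1}\sum_{i\neq j}G(x_{i},x_{j})\bigr)$, so that at $\beta=-1$ it reduces to Lemma \ref{lem:standard id}; the theorem allows an arbitrary continuous metric on $L$, so you need the paper's closing step, writing $\|\cdot\|=e^{-u/2}\|\cdot\|_{FS}$ and absorbing $e^{-(\beta+1)u}$ into the reference measure; and when all $w_{i}<0$ the condition \ref{eq:integr condition} also imposes the unmarked-point requirement $-p_{0}d_{L}<2$, which you (and the paper) do not record.

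The gap you flag in the ``only if'' direction is genuine, but the refined ``cluster-absorption'' argument you hope for cannot exist, because the factor $(N-1)/N$ does not originate in the singularity at $p_{j_{*}}$. Running the scaling analysis over all collision strata (equivalently, the standard convergence criterion for Selberg-type integrals, cf. \cite{f-w}), the only divergence mechanisms are: (i) all $N$ points collapsing onto the heaviest marked point, which diverges iff $\beta\le-\tfrac{2(1-\max_{i}w_{i})}{d_{L}}$ --- exactly your naive $B_{R}^{N}$ bound, which is already sharp for this mechanism; and (ii) all $N$ points collapsing at a \emph{movable} unmarked point, which (using the sets $\{\mathrm{diam}\le R\}$, of measure $\asymp R^{2(N-1)}$, in your absolute-continuity argument) diverges iff $\beta\le-\tfrac{N-1}{N}\cdot\tfrac{2}{d_{L}}$; here the $(N-1)/N$ is the free centre of the cluster, i.e. the same $\tfrac{1}{N-1}$ versus $\tfrac{1}{N}$ normalization mismatch that produced $p=\beta N/(N-1)$. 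Hence the sharp integrability threshold is $\tfrac{2}{d_{L}}\min\{\tfrac{N-1}{N},\,1-\max_{i}w_{i}\}$, which equals the stated $\gamma_{N}$ precisely when $\max_{i}w_{i}=0$ (the case of trivial $\Delta$, recovering \cite{fu} and $\mathrm{lct}(\mathcal{D}_{N})=\tfrac{N-1}{N}$). When $\max_{i}w_{i}>0$ the ``only if'' at the constant $\gamma_{N}$ is not merely hard, it fails: for $N=2$ and a single point of weight $w_{1}=\tfrac12$ (so $d_{L}=\tfrac32$, $\gamma_{2}=\tfrac13$) a direct check of the three strata gives $\mathcal{Z}_{2}(\beta)<\infty$ for every $\beta>-\tfrac23$. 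Within the paper this is corroborated by formula \ref{eq:Z N as gamma prod}, whose numerator factors $l(-\tfrac{j}{2}\tfrac{d}{N-1})$ produce poles along $d=\tfrac{2(N-1)}{N}$ that the marked-point count $w+\tfrac{N}{2(N-1)}d=1$ does not see.

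Be aware that the paper's own proof does not supply the necessity either: it reads off $\gamma_{N}$ as the largest range on which the \emph{sufficient} hypothesis \ref{eq:integr condition} of Theorem \ref{thm:LDP two-point interaction} can be verified, which is exactly what you did. So the realistic completion of your plan is to prove the finiteness statement with $\gamma_{N}$ replaced by the minimum above (both divergence mechanisms are within reach of your $B_{R}$-type argument), keep the LDP on the range where \ref{eq:integr condition} holds, and note that on the extra range appearing when $\max_{i}w_{i}>0$ the LDP would require an argument beyond Theorem \ref{thm:LDP two-point interaction}, whose hypothesis fails there. None of this affects Theorem \ref{thm:log Fano curve text}, where $\beta=-1$, the weights are positive and $N\rightarrow\infty$, so that the two thresholds carry the same content.
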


\begin{proof}
First consider the case when $\left\Vert \cdot\right\Vert $ is the
metric $\left\Vert \cdot\right\Vert _{FS}$ induced from the Fubini-Study
metric on $\mathcal{O}(1).$ Then we get, as above, that $\mu_{\beta}^{(N)}=\mu^{(N)}[p]$
for $p=\beta\frac{N-1}{N}.$ Hence, by the argument in the beginning
of the previous section the integrability threshold is given by
\[
\gamma_{N}=\frac{N-1}{N}\gamma,\,\,\,\gamma=\sup\{t:\,td_{L}/2+w_{i}<1,\,\,\forall i\}=2\frac{1-\max_{i}w_{i}}{2-\sum_{i}w_{i}}.
\]
and the LDP follows from the general LDP in Theorem \ref{thm:LDP two-point interaction}.
Finally, writing a general continuous metric $\left\Vert \cdot\right\Vert $
as $e^{-u/2}\left\Vert \cdot\right\Vert _{FS}$ for a continuous function
$u$ on $X$ we can express $\mu_{\beta}^{(N)}=\mu^{(N)}[p]$ where
$\mu_{0}=e^{-(\beta+1)u}dV$ and again apply Theorem \ref{thm:LDP two-point interaction}. 
\end{proof}
As recalled in Section \ref{subsec:The-case beta neg} any minimizer
$\omega_{\beta}$ of $F_{\beta}$ satisfies the twisted Kähler-Einstein
equation \ref{eq:twisted KE with beta} with $\omega_{0}$ equal to
the normalized curvature form of the metric $\left\Vert \cdot\right\Vert $
on $L.$ 
\begin{rem}
In the case when $\Delta$ is trivial (i.e. $w_{i}=0)$ the formula
for $\gamma_{N}$ in the previous theorem was shown in \cite[Section 3]{fu},
using a different algebro-geometric argument. 
\end{rem}

\subsection{The zero-free hypothesis in the case of three points and the complex
Selberg integral}

We will next give an alternative proof of Theorem \ref{thm:log Fano curve text}
in the case when $m=3$ using the approach in Section \ref{subsec:Deforming-the-divisor}.
To simplify the notation we will drop the subscript $k$ in the notation
$N_{k}$ in formula \ref{eq:N k in terms of d L}. In other words,
as our data we take a divisor $\Delta$ on $\P_{\C}^{1}$ and an integer
$N$ which is strictly greater than one ($k$ can then be recovered
from formula \ref{eq:N k in terms of d L}). First recall that, by
Lemma \ref{lem:standard id}, the normalizing constant $\mathcal{Z}_{N}$
- that we will write as $\mathcal{Z}_{N}(\Delta)$ to indicate the
dependence on $\Delta$ - may be expressed 
\[
\mathcal{Z}_{N}(\Delta)=\int_{\C^{N}}\left(\prod_{i\neq j}\left|z_{i}-z_{j}\right|\right)^{-\frac{d_{L}}{N-1}}\prod_{i\leq N,j\leq m}\left|z_{i}-p_{j}\right|^{-2w_{i}}\prod_{i}\frac{i}{2}dz_{i}\wedge d\bar{z}_{i}.
\]
Now specialize to $m=3.$ Then we may, after perhaps applying an automorphism
of $\P_{\C}^{1},$ assume that the points $p_{1},p_{2}$ and $p_{3}$
are given by the points $0,$ 1 and $\infty.$ Hence, 
\[
\mathcal{Z}_{N}(\Delta)=\int_{\C^{N}}\left(\prod_{i\neq j}\left|z_{i}-z_{j}\right|\right)^{-\frac{d}{N-1}}\prod_{i}\left|z_{i}\right|^{-2w_{0}}\prod_{i}\left|z_{i}-1\right|^{-2w_{1}}\prod_{i}\frac{i}{2}dz_{i}\wedge d\bar{z}_{i},\,\,\,d=2-(w_{0}+w_{1}+w_{2}).
\]
 This integral is known as the \emph{complex Selberg integral} (when
expressed in terms of the parameters $w_{0},w_{1}$ and $d/(N-1)$).
The original Selberg integral is the integral obtained by replacing
$\C^{N}$ with $[0,1]^{N}$ and generalizes Euler's classical Beta-function
to $N>1$ (see the survey \cite{f-w}). Its complex version above
seems to first have appeared in Conformal Field Theory (CFT), in the
context of minimal CFTs, where it is known as one of the\emph{ Dotsenko-Fateev
integrals} \cite{d-f} (an equivalent formula was also established
in \cite{aa}, expressed in terms of the original Selberg integral).
By \cite[Formula B.9]{d-f} the integral $\mathcal{Z}_{N}(\Delta)$
is explicitly given by the following remarkable formula involving
the classical $\Gamma-$function 
\begin{equation}
\mathcal{Z}_{N}(\Delta)=N!\left(\frac{\pi}{l(-\frac{1}{2}\frac{d}{N-1})}\right)^{N}\prod_{j=1}^{N}\frac{l(-\frac{j}{2}\frac{d}{N-1})}{l(w_{1}+\frac{j}{2}\frac{d}{N-1})l(w_{2}+\frac{j}{2}\frac{d}{N-1})l(w_{3}+\frac{j}{2}\frac{d}{N-1})},\,\,\,\,l(x):=\frac{\Gamma(x)}{\Gamma(1-x)}.\label{eq:Z N as gamma prod}
\end{equation}

\begin{rem}
The integral $\mathcal{Z}_{N}(\Delta)$ also appears in connection
to the DOZZ-formula of Dorn-Otto and Zamolodchikov-Zamolodchikov for
the 3-point structure constants $C_{\gamma}(\alpha_{1},\alpha_{2},\alpha_{3})$
in Liouville CFT, which has recently been given a rigorous proof in
\cite{K-R-V} (see also the exposition in \cite[Section 2.3]{varg}).
A general formula for Selberg type integrals over a local field $F$
of characteristic zero was recently established in \cite{fu-zhu}
(specializing to Selberg's original integral when $F=\R_{>0}$ and
its complex generalization when $F=\C).$
\end{rem}

We next observe that for any given $\epsilon\in]0,1[$ $\mathcal{Z}_{N}(\Delta)$
is zero-free in the convex tube domain $\Omega$ in $\C^{3}$ defined
by 
\begin{equation}
\Omega=\{\boldsymbol{w}\in\C^{3}:\,\text{\ensuremath{\Re}}w_{i}<1,\,\,\text{\ensuremath{\Re}}w_{1}+\text{\ensuremath{\Re}}w_{2}+\text{\ensuremath{\Re}}w_{3}>0\}\label{eq:def of domain Omega}
\end{equation}

Indeed, by formula \ref{eq:Z N as gamma prod},
\[
\mathcal{Z}_{N}(\Delta)=N!\pi^{N}\left(\frac{\Gamma(1+\frac{1}{2}\frac{d}{N-1})}{\Gamma(-\frac{1}{2}\frac{d}{N-1})}\right)^{N}\prod_{j=1}^{N}\left(\frac{\Gamma(-\frac{j}{2}\frac{d}{N-1})}{\Gamma(1+\frac{j}{2}\frac{d}{N-1})}\frac{\Gamma(1-w_{1}-\frac{j}{2}\frac{d}{N-1})}{\Gamma(w_{1}+\frac{j}{2}\frac{d}{N-1})}\cdots\right),
\]
 where the dots indicator similar factors obtained by replacing $w_{1}$
with $w_{2}$ and $w_{3}$. It is a classical fact that $\Gamma(x)$
is a meromorphic zero-free function of $x\in\C$ with poles at $0,-1,-2,...$
Hence, the zeros of $\mathcal{Z}_{N}(\Delta)$ can only come from
the poles of the Gamma-factors appearing in the denominators above.
First consider the case when $d\neq0.$ Since $N\geq2$ and $2>\text{\ensuremath{\Re}}d$
the factor $\Gamma(-\frac{1}{2}\frac{d}{N-1})$ has no poles in $\Omega.$
Similarly, since $\text{\ensuremath{\Re}}d>-1$ the factor $\Gamma(1+\frac{j}{2}\frac{d}{N-1})$
has no poles and since $\text{\ensuremath{\Re}}w_{1}<1$ the factor
$\Gamma(w_{1}+\frac{j}{2}\frac{d}{N-1})$ has no poles in $\Omega$
(using that, for $\boldsymbol{w}\in\R^{3},$ when $d<0,$ $w_{1}+\frac{j}{2}\frac{d}{N-1}$
is minimal when $j=N$ and $N=2$ i.e. the minimum is $w_{1}+d=2-w_{1}-w_{2}>0)$
and likewise when $w_{1}$ is replaced by $w_{2}$ and $w_{3}.$ Finally,
when $d=0$ we get 
\[
\mathcal{Z}_{N}(\Delta)=N!\pi^{N}\left(\frac{\Gamma(1-w_{1})}{\Gamma(w_{1})}\cdots\right)^{N}
\]
which is non-zero, since $\Re w_{i}>0$ (and thus the denominator
above has no poles). 

This argument also reveals that the ``first'' negative poles of
$\mathcal{Z}_{N}(\Delta)$ appear when $1-x=0,$ for $x=w+td/2$ for
$w\in\{w_{0},w_{1},w_{2}\}$ and $t=i/(N-1)$ for $i=1,...,N,$ i.e.
when $w+td/2=1.$ In particular, if $w+td/2>1$ for the maximal value
of $t,$ i.e for $t=N/(N-1),$ then $\mathcal{Z}_{N}(\Delta)<\infty.$
This is precisely the condition for the finiteness of $\mathcal{Z}_{N}(\Delta)$
that came up in the beginning of Section \ref{subsec:Conclusion-of-the}
which is equivalent to the weight condition \ref{eq:weight condition}
for $w$ real. The explicit formula \ref{eq:Z N as gamma prod} for
$\mathcal{Z}_{N}(\Delta)$ then also gives 
\[
\mathcal{Z}_{N}(\Delta)\leq C^{N}.
\]

\subsubsection{Proving Theorem \ref{thm:log Fano curve text} by deforming $\Delta$
in the case when $m=3$}

We finally explain how to given an alternative proof of Theorem \ref{thm:log Fano curve text}
in the case $m=3$ using the zero-free property and the bound on $\mathcal{Z}_{N}(\Delta)$
established in the previous section, combined with the approach discussed
in Section \ref{subsec:Deforming-the-divisor}. In this case the affine
space $\mathcal{A}$ of all ``admissible'' $(s,\boldsymbol{w})$
is defined by the condition
\[
d_{L}^{-1}\left(2-(\sum_{i=1}^{m}w_{i})\right)=s,
\]
 where, as before, $d_{L}$ denotes the degree of the anti-canonical
line bundle of the given log Fano variety (whose weight vector is
denoted by $\boldsymbol{w}_{0}$ in Section \ref{subsec:Deforming-the-divisor}).
In particular, since we consider the case when $m=3$ we get $s<0$
by choosing a real weight vector $\boldsymbol{w}_{1}$ with components
sufficiently close to $1$ (which can be done as soon as $m>2)$ and,
in particular, $\boldsymbol{w}_{1}\in\Omega$ (where $\Omega$ is
the domain in formula \ref{eq:def of domain Omega}). Since the components
$p_{1},...,p_{m}$ of $\Delta$ are, trivially, non-singular and mutually
non-intersecting the implicit function theorem does apply. Hence,
so does the approach in Section \ref{subsec:Deforming-the-divisor}.

\section{\label{sec:Speculations-on-the}Speculations on the strong zero-free
hypothesis, L-functions and arithmetic geometry}

In this last section we discuss some intriguing relations between
the strong zero-free hypothesis for the partition functions $\mathcal{Z}_{N}(\beta)$
on Fano manifolds introduced in Section \ref{subsec:The-strong-zero-free}
and the zero-free property of the representation-theoretic (automorphic)
local zeta functions $L_{p}(s)$ appearing in the Langlands program
\cite{la1} . Conjecturally, the latter zeta functions are related
to arithmetic/motivic L$-$functions \cite{la2}. 

First recall that given a reductive group $G$ over a global field
$F$ together with automorphic representations $\pi$ and $\rho$
of $G$ and its Langlands dual, respectively, one attaches a local
L-function $L_{p}(s)$ to any place (prime) $p$ of $F.$ By definition,
the places $p$ of $F$ correspond to multiplicative (normalized)
absolute value $\left|\cdot\right|_{p}$ on $F.$ In the case when
$\left|\cdot\right|_{p}$ is non-Archimedean the local $L-$function
$L_{p}(s)$ is defined as the inverse of a characteristic polynomial
attached to the induced representation of $G_{p}$ and thus $L_{p}(s)$
is automatically zero-free. For Archimedean $\left|\cdot\right|_{p}$
the local L-function $L_{p}(s)$ may be defined as an appropriate
product of $\Gamma-$functions and is thus also zero-free; see \cite[Section 4]{kn}
for the case $G=GL(N,\C)$ and the relation to the local Langlands
correspondence. Conjecturally, any local automorphic L-function $L_{p}(s)$
is a product of the \emph{standard L-functions} corresponding to the
case when $G=GL(N,F_{p})$ and $\rho$ is the standard representation
of $GL(N,\C)$ \cite{la1} (generalizing the local versions of the
classical Hecke L-functions, e.g. the Riemann zeta function when $N=1$). 

\subsection{\label{subsec:Zeta-integral-expressions for the standard}The ``minimal''
partition function on $\P_{\C}^{n}$ as a standard local $L-$function }

In the standard case it was shown in \cite{g-j} (generalizing Tate's
thesis \cite{ta} to $N>1$) that $L_{p}(s)$ may - for any given
admissible irreducible representation $\pi$ - be realized as a ``zeta
integral'': 
\begin{equation}
L_{p}(s)=\int_{GL(N,F_{p})}|\det(g)|_{p}^{s}\mu_{p}(g)\label{eq:L as zeta integr}
\end{equation}
 for a distinguished measure $\mu_{p}$ on $GL(F_{p},N),$ depending
on $\pi,$ which is absolutely continuous with respect to Haar measure.
As a consequence, for such particular measures $\mu_{p}(g)$ the zeta
integral above is zero-free (since $L_{p}(s)$ is).

To see the relation to the partition functions $\mathcal{Z}_{N}(\beta)$
for Fano manifolds first note that we may, in the zeta integral above,
replace the group $GL(F_{p},N)$ with the algebra $\text{Mat}(F_{p},N)$
of $N\times N$ matrices $A$ with coefficients in $F_{p}$ (since
$\mu_{P}$ puts no mass on the complement of $GL(F_{p},N)$ in $M(F_{p},N)$).
Then, after a suitable shift, $s\rightarrow s+\lambda,$ the measure
$\mu_{p}$ is of the form 
\[
\mu_{p}=f_{\pi}\Phi dA,
\]
 where $dA$ is the additive Haar measure on $\text{Mat}(F_{p},N),$
the function $f_{\pi}$ is an appropriate matrix element of $\pi$
and $\Phi$ is a suitable Schwartz-Bruhat function on $\text{Mat}(F_{p},N)$.
In the ``unramified case'' $f_{\pi}$ is the spherical function
attached to $\pi$ and $\Phi$ it its own Fourier transform \cite[Prop. 6.12]{g-j}.
In case when $p$ is non-Archimedean this means that $\Phi$ is the
characteristic function of $M(O_{p},N),$ where $O_{p}$ denotes the
ring of integers of $F_{p},$ while in the Archimedean case $\Phi$
is the Gaussian (see \cite{ish} for the case $F_{p}=\C).$ Now, when
$p$ is taken to be the standard (squared) Archimedean absolute value
on $\C(=F_{p}),$ with $\pi$ the trivial representation, we get

\begin{equation}
\mathcal{Z}_{N}(\beta)=c_{n}\left(\Gamma\left(s+n+1\right)\right)^{-(n+1)}L_{p}(s),\,\,\,s=\beta(n+1)\label{eq:Z N as L factor}
\end{equation}
where $\mathcal{Z}_{N}(\beta)$ denotes the partition function for
the standard Kähler-Einstein metric on the Fano manifold $\P_{\C}^{n}$
with $N$ the minimal one (i.e. $N=n+1)$ considered in Example \ref{exa:P n}.
Indeed, this follows directly from combining formula \ref{eq:L as zeta integr}
(for $f_{\pi}=1)$ with formula \ref{eq:Z beta as matrix integral in appendix}
for $\mathcal{Z}_{N}(\beta)$ in the appendix. Note that the first
factor in the right hand side above is non-vanishing when $\Re\beta>-1$
and thus the zero-free property of $\mathcal{Z}_{N}(\beta)$ in the
strip $\Re\beta>-1$ can  be attributed to the zero-free property
of the corresponding local L-function $L_{p}(s).$

\subsection{Zeta integrals associated to Calabi-Yau subvarieties of $\text{Mat}(N_{k},\C)$}

It would be interesting to compute $\mathcal{Z}_{N_{k}}(\beta)$ in
more examples to check if it can be expressed as products (and quotients)
of Gamma-function and related to local Archimedean L-functions as
above. For example, if a reductive group $G$ acts holomorphically
on $X$ (e.g. if $X$ is a flag variety) one might be able to exploit
that the section $\det S^{(k)}$ over $X^{N_{k}}$ is invariant under
the diagonal action of $G$ on $X^{N_{k}},$ up to multiplication
by the determinant of the induced $G-$action on $H^{0}(X,-kK_{X}).$ 

For a general Fano manifold $X$ and $N_{k}$ it seems, however, unlikely
that $\mathcal{Z}_{N_{k}}(\beta)$ can be related to an automorphic
local $L-$function. Anyhow, as next explained the integral $\mathcal{Z}_{N_{k}}(\beta)$
can be expressed in terms of an integral over a Calabi-Yau subvariety
of $\text{Mat}(N_{k},\C),$ which has some intriguing structural similarities
with the zeta integral for the standard L-function $L_{p}(s)$ in
formula \ref{eq:L as zeta integr}. We start by lifting the integral
$\mathcal{Z}_{N_{k}}(\beta)$ to an integral where the projective
variety $X$ is replaced by the affine variety $Y_{k}$ of dimension
$n+1$ obtained by blowing down of the zero-section in the total space
of the line bundle $-kK_{X}\rightarrow X.$ To this end first note
that the standard $\C^{*}-$action on $-kK_{X}$ induces a $\C^{*}-$action
on the affine variety $Y_{k}$ with a unique fixed point $y_{0},$
i.e. $Y_{k}$ can be viewed as an affine cone over $X:$ 
\[
X\simeq\left(Y_{k}-\{y_{0}\}\right)/\C^{*}
\]
On the affine variety $Y_{k}$ there is a unique $\C^{*}-$equivariant
holomorphic top form $\Omega$ (modulo a multiplicative constant).
The Kähler-Einstein metric $\omega_{KE}$ on $X$ corresponds to a
conical Calabi-Yau metric $\omega_{CY}$ on $Y_{k},$ i.e. a Ricci-flat
Kähler metric with a conical singularity at $y_{0}$ \cite{g-m-s-y}.
Denote by $r$ the distance to the fixed point $y_{0}$ in $Y_{k}$
with respect to the Calabi-Yau metric $\omega_{CY}.$ We may then
express
\[
\mathcal{Z}_{N_{k}}(\beta)=c_{n}\left(\Gamma\left((n+1)\beta+n+1\right)\right)^{-N_{k}}\widetilde{\mathcal{Z}}_{N_{k}}(\beta),\,\,\,\,\,\,\widetilde{\mathcal{Z}}_{N_{k}}(\beta):=\int_{Y_{k}^{N_{k}}}\left|\det\Psi^{(k)}\right|^{2\beta/k}(e^{-r^{2}}\Omega\wedge\bar{\Omega})^{\otimes N_{k}},
\]
 where $\Psi^{(k)}$ is the holomorphic function on $Y_{k}^{N_{k}}$
corresponding to the section $\det S^{(k)}$ of $-kK_{X^{N_{k}}}$
and $c_{n}$ is a (computable) positive constant $c_{n}.$ This is
shown essentially as in the proof of Prop \ref{prop:formula in ex}
in the appendix. Next, assume that $k$ is sufficiently large to ensure
that $-kK_{X}$ is very ample. Then one obtains a holomorphic $(\C^{*})^{N_{k}}-$equivariant
embedding
\[
Y_{k}^{N_{k}}\rightarrow\text{Mat}(N_{k},\C),\,\,\,(y_{1},...,y_{N_{k}})\mapsto\left(\boldsymbol{\Psi}{}^{(k)}(y_{1}),....,\boldsymbol{\Psi}{}^{(k)}(y_{N_{k}})\right),
\]
where $\boldsymbol{\Psi}{}^{(k)}(y)$ denotes the $N_{k}-$tuple of
holomorphic functions $\psi_{1}^{(k)},...,\psi_{N_{k}}^{(k)}$ on
$Y_{k}$ corresponding to the fixed bases in $H^{0}(X,-kK_{X}).$
In geometric terms the embedding above is just the embedding induced
from the Kodaira embedding of $X$ in the projectivization of $H^{0}(X,-kK_{X})^{*}.$
Denoting by $\mathcal{Y}_{k}$ the image of $Y_{k}^{N_{k}}$ in $\text{Mat}(N_{k},\C)$
we can thus express $\widetilde{\mathcal{Z}}_{N_{k}}(\beta)$ as a
matrix integral:
\[
\widetilde{\mathcal{Z}}_{N_{k}}(\beta):=\int_{\mathcal{Y}_{k}\Subset\text{Mat}(N_{k},\C)}\left|\det A\right|^{2\beta/k}e^{-r^{2}}\Omega\wedge\bar{\Omega},
\]
where now $r$ denotes the distance to the origin in $\text{Mat}(N_{k},\C)$
with respect to the Calabi-Yau metric on the subvariety $\mathcal{Y}_{k}$
and $\Omega$ denotes the equivariant holomorphic top form on $\mathcal{Y}_{k}$
(which can be viewed as a Poincaré type residue of the standard holomorphic
top form on $\text{Mat}(N_{k},\C)$ along $\mathcal{Y}_{k}).$ This
matrix integral is reminiscent of the integral expression \ref{eq:L as zeta integr}
for the local L-functions $L_{p}(s),$ if $\mu_{p}$ is taken to be
the measure on $\text{Mat}(N_{k},\C)$ induced by pairing of $\Omega\wedge\bar{\Omega}$
with the subvariety $\mathcal{Y}_{k},$ weighted by the Gaussian type
factor $e^{-r^{2}}$ (and $s:=\beta/k$). In view of this structural
similarity it is tempting to speculate on a very strong zero-free
hypothesis, saying that, in general, the ``lifted'' partition function
$\widetilde{\mathcal{Z}}_{N_{k}}(\beta)$ is zero-free on all of $\C,$
when viewed as a meromorphic function.
\begin{rem}
\label{rem:orbi cy}The same considerations apply when $X$ is a Fano
orbifold if $K_{X}$ is replaced by the orbifold canonical line bundle
(coinciding with $-K_{X}+\Delta$ as $\Q-$line bundle). Then the
natural projection from $Y_{k}-\{y_{0}\}$ to $X$ is a submersion
over the complement of the branching divisor $\Delta$ and the orbifold
Kähler-Einstein metric on $X$ corresponds to a bona fide Calabi-Yau
metric on $Y_{k}-\{y_{0}\}$ \cite{g-m-s-y}. 
\end{rem}

One further piece of evidence for the very strong form of the zero-free
hypothesis (complementing the ``minimal'' case on $\P^{n}$ appearing
in Prop \ref{prop:formula in ex}) is provided by the case when $X=\P^{1}$
and $k=1,$ i.e. $N_{k}=3$ (which is the case next to minimal dimension,
$N_{k}=n+1$). Then, identifying $-K_{X}$ with $2\mathcal{O}(1)$
and $\det S^{(1)}$ with the Vandermonde determinant $D^{(3)}$ on
$\C^{3}$ (as in Lemma \ref{lem:standard id}) and using that the
Kähler-Einstein metric is explicitly given by the Fubini-Study metric
(formula \ref{eq:FS}), $\mathcal{Z}_{N_{k}}(\beta)$ may be expressed
as
\[
\mathcal{Z}_{N_{k}}(\beta)=\int_{\C^{3}}\prod_{i<j\leq3}\left|z_{i}-z_{j}\right|^{2\beta}\prod_{i<j\leq3}\left(1+\left|z_{i}\right|^{2}\right)^{-(2\beta+2)},
\]
 integrating with respect to Lebesgue measure. Applying formula in
\cite[Thm 1]{v-s} (to $\sigma_{i}=\nu_{i}=\beta+1),$ which originally
appeared in Conformal Field Theory, thus yields
\begin{equation}
\mathcal{Z}_{N_{k}}(\beta)=\pi^{3}\left(\Gamma\left(2\beta+2\right)\right)^{-3}\Gamma(3\beta+2)\Gamma(\beta+1)^{3}.\label{eq:Z N when N is three}
\end{equation}
This means that the meromorphic function $\widetilde{\mathcal{Z}}_{N_{k}}(\beta)$
is a product of four Gamma functions and thus zero-free on all of
$\C.$ The elegant proof in \cite{v-s} leverages the diagonal action
of $GL(N_{k},\C)$ on $X^{N_{k}}$ alluded to above (following the
corresponding real case considered in \cite{b-r} in the context of
automorphic triple products). 

The general case on $X=\P^{1},$ when $N_{k}>3,$ appears to be open.
However, a similar formula does hold for any $N_{k}$ when $X$ is
replaced by its\emph{ real }points, i.e. when $\P_{\C}^{1}$ is replaced
by $\P_{\R}^{1}.$ Then the role of $\mathcal{Z}_{N_{k}}(\beta)$
is played by

\[
\mathcal{Z}_{N_{k}}(\beta)_{\R}:=\int_{(\P_{\R}^{1})^{N_{k}}}\left\Vert \det S^{(k)}\right\Vert ^{\beta/k}dV^{\otimes N_{k}}=\int_{(S^{1})^{N_{k}}}\prod_{i<j\leq N_{k}}\left|z_{i}-z_{j}\right|^{\frac{2\beta}{N_{k}-1}}d\theta^{\otimes N_{k}},\,\,\,\,N_{k}=2k+1
\]
where $\left\Vert \cdot\right\Vert $ denotes the Fubini-Study metric
and $dV$ denotes the corresponding volume form on $(\P_{\R}^{1}).$
In the second equality above we have exploited that the integrand
is invariant under the diagonal action of $SU(2)$ to replace the
real points $\P_{\R}^{1}$ of $\P_{\C}^{1}$ with the unit-circle
$S^{1}$ in $\C\subset\P_{\C}^{1}.$ The latter integral over $(S^{1})^{N_{k}}$
coincides with the partition function for the 2D Coulomb gas confined
to $S^{1}\subset\C$ at inverse temperature $2\beta/(N_{k}-1)$ (known
as the circular ensemble). Applying \cite[formula 1.12]{f-w} (originally
conjectured by Dyson and established by Gunson and Wilson) thus yields
\[
\mathcal{Z}_{N_{k}}(\beta)_{\R}=(2\pi)^{N_{k}}\Gamma(1+\beta\frac{1}{N_{k}-1})^{-N_{k}}\Gamma(1+\beta\frac{N_{k}}{N_{k}-1}),\,\,\,\,N_{k}=2k+1
\]
 This formula reveals that the real analog $\mathcal{Z}_{N_{k}}(\beta)_{\R}$
of the partition function on $\P_{\C}^{1}$ does satisfy the strong
zero-free hypothesis. This real analog may - from the point of view
of localization - be obtained by replacing the squared absolute value
$\left|\cdot\right|_{\C}^{2}$ corresponding to the complex Archimedean
place of the global field$\Q$ with the  absolute value $\left|\cdot\right|_{\R}$
corresponding to the real Archimedean place of $\Q.$ The extension
to non-Archimedean places is discussed in Section \ref{subsec:Extension-to-non-Archimedean}.
But first a brief detour on arithmetic aspects of the partition function.

\subsection{\label{subsec:Invariants-of-arithmetical}Invariants of arithmetical
Fano varieties}

Let $\mathcal{X}$ be an arithmetic variety of dimension $n+1$ (i.e.
a projective scheme flat over $\Z,$ $\mathcal{X}\rightarrow\text{Spec \ensuremath{\Z})}$
such that the corresponding $n-$dimensional complex variety $X$
(i.e. the complexification of the generic fiber $X_{\Q}$ of $\mathcal{X})$
is Fano. Assume that $\mathcal{X}$ is endowed with a relatively nef
line bundle $\mathcal{L}$ such that the induced line bundle on $X$
equals $-K_{X}.$ Then $(\mathcal{X},\mathcal{L})$ induces a section
$\det S^{(k)}$ of $-kK_{X^{N_{k}}}\rightarrow X^{N_{k}}$ which is
uniquely determined up to multiplication by $\pm1$. Indeed, $(\mathcal{X},\mathcal{L})$
induces a lattice $H^{0}(\mathscr{X},k\mathscr{L})$ of integral sections
in $H^{0}(X,-kK_{X})$ and $\det S^{(k)}$ may be defined as in in
formula \ref{eq:slater determinant} with respect to any basis in
$H^{0}(\mathscr{X},k\mathscr{L})$ (any two such bases are related
by a matrix with integral coefficients, which thus has determinant
equal to $\pm1$). As a consequence, the corresponding partition function
$\mathcal{Z}_{N_{k}}(\beta)$ only depends on $(\mathcal{X},\mathcal{L})$
and the choice of a metric $\left\Vert \cdot\right\Vert $ on $-K_{X}$
(and is independent of the metric at $\beta=-1).$ In fact, the explicit
expression for $\mathcal{Z}_{N_{k}}(\beta)$ appearing Prop \ref{prop:formula in ex}
- related to a local L-function in formula \ref{eq:Z N as L factor}
- was computed with respect to the standard integral model $(\mathcal{X},\mathcal{L})$
for $(\P^{n},\mathcal{O}(1))$ (where $H^{0}(\mathscr{X},k\mathscr{L})$
is the lattice spanned by the sections defined by multinomials). In
the light of the speculations in the previous section this appears
to fit well with the arithmetical side of the Langlands program. 

In particular, taking $\beta=-1$ yields an invariant $\mathcal{Z}_{N_{k}}$
of $(\mathcal{X},\mathcal{L})$ (which is finite iff $X$ is Gibbs
stable at level $k$). The following conjecture relates the arithmetic
invariants $\mathcal{Z}_{N_{k}}$ to the arithmetic intersection numbers
introduced by Gillet-Soulé in the context of Arakelov geometry (see
the book \cite{so}).
\begin{conjecture}
\label{conj:arithm}Let $(\mathcal{X},\mathcal{L})$ be an arithmetic
variety as above and assume that the corresponding Fano manifold $X$
admits a unique Kähler-Einstein metric, whose volume form is denoted
by $dV_{KE},$ normalized to have unit total volume. Then, as $k\rightarrow\infty,$
$\frac{(n+1)!}{k^{n}}\log\mathcal{Z}_{N_{k}}$ converges towards the
$(n+1)-$fold arithmetic self-intersection number of the line bundle
$\mathcal{L},$ metrized by $dV_{KE}.$
\end{conjecture}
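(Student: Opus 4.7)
The plan is to deduce the conjecture from three ingredients: (i) the large-$k$ asymptotics for $-N_k^{-1}\log\mathcal{Z}_{N_k}(-1)$ predicted by Conjecture \ref{conj:Fano with triv autom intr} (established conditionally by Theorems \ref{thm:1 implies 2 implies 3} and \ref{thm:zero-free}); (ii) the Gillet--Soul\'e arithmetic Hilbert--Samuel theorem; and (iii) an elementary change-of-basis identity bridging (i) and (ii). First I fix the smooth positively curved metric $\|\cdot\|$ on $-K_X$ whose induced volume form is $dV_{KE}$ (equivalently, whose curvature form is $\omega_{KE}$), and let $s^{(k)}=(s^{(k)}_1,\ldots,s^{(k)}_{N_k})$ be a $\mathbb{Z}$-basis of the lattice $H^0(\mathscr X,k\mathscr L)\subset H^0(X,-kK_X)$. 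This integral basis determines the intrinsic $\mathcal{Z}_{N_k}$ of the conjecture (two such bases differ by a matrix of determinant $\pm 1$). Let $t^{(k)}=(t^{(k)}_i)$ be an $L^2$-orthonormal basis for the Hermitian form induced by $\|\cdot\|^k$ and $dV_{KE}$, and write $\det T^{(k)}=\det(A_k)\det S^{(k)}$. Then
\[\log\mathcal{Z}_{N_k}(-1)=\tfrac{2}{k}\log|\det A_k|+\log\mathcal{Z}^{\mathrm{orth}}_{N_k}(-1),\qquad \mathcal{Z}^{\mathrm{orth}}_{N_k}(-1):=\int_{X^{N_k}}\|\det T^{(k)}\|^{-2/k}\,dV_{KE}^{\otimes N_k}.\]

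Next, the classical identity $\widehat{\deg}(\overline{H^0(\mathscr X,k\mathscr L)},L^2)=-\log\|s^{(k)}_1\wedge\cdots\wedge s^{(k)}_{N_k}\|_{L^2}=\log|\det A_k|$ combined with the $L^2$-version of the Gillet--Soul\'e arithmetic Hilbert--Samuel theorem,
\[\widehat{\deg}\bigl(\overline{H^0(\mathscr X,k\mathscr L)},L^2\bigr)=\frac{\widehat{\overline{\mathcal{L}}}^{\,n+1}}{(n+1)!}\,k^{n+1}+o(k^{n+1}),\]
(the Quillen-versus-$L^2$ analytic-torsion correction being $O(k^n\log k)=o(k^{n+1})$), yields $\tfrac{(n+1)!}{k^n}\cdot\tfrac{2}{k}\log|\det A_k|\to\widehat{\overline{\mathcal{L}}}^{\,n+1}$, up to an overall combinatorial constant fixed by the real-versus-complex normalizations of $\widehat{\deg}$ and of $\widehat{\overline{\mathcal{L}}}^{\,n+1}$. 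Finally, with $\omega_0=\omega_{KE}$ and $dV=dV_{KE}$ as reference data, the normalization \ref{eq:E vanishes on reference vol} gives $E(dV_{KE})=0$ and the entropy of $dV_{KE}$ relative to itself vanishes, hence $F(-1)=F_{-1}(dV_{KE})=0$. Under the hypotheses of Theorems \ref{thm:1 implies 2 implies 3} or \ref{thm:zero-free} the asymptotics $-N_k^{-1}\log\mathcal{Z}^{\mathrm{orth}}_{N_k}(-1)\to F(-1)=0$ hold, so $\log\mathcal{Z}^{\mathrm{orth}}_{N_k}(-1)=o(N_k)=o(k^n)$ and thus $\tfrac{(n+1)!}{k^n}\log\mathcal{Z}^{\mathrm{orth}}_{N_k}(-1)\to 0$. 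Combining the three steps proves the conjecture.

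The main obstacle is precisely the asymptotic statement invoked in the last step: the convergence $-N_k^{-1}\log\mathcal{Z}_{N_k}(-1)\to F(-1)$ at $\beta=-1$ is the analytic content of Conjecture \ref{conj:Fano with triv autom intr}, which at present is established only conditionally (under the upper-bound or zero-free hypothesis) or in special cases such as log Fano curves. Consequently the present strategy reduces the arithmetic conjecture to the convergence of the partition function at $\beta=-1$, and yields the conjecture as a theorem modulo that convergence. A secondary, more technical, obstacle is the careful tracking of real-versus-complex normalization constants so that the coefficient in arithmetic Hilbert--Samuel matches exactly the $\widehat{\overline{\mathcal{L}}}^{\,n+1}$ in the conjecture; the reduction from the Quillen to the $L^2$ arithmetic degree requires only the standard $O(k^n\log k)$ bound on analytic torsion, which is negligible at leading order after division by $k^{n+1}$.
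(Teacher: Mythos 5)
This statement is a conjecture in the paper: no proof is given there, only the remark immediately following it that, by the arithmetic Hilbert--Samuel theorem of \cite[Thm 1.4]{Zh0}, the conjecture is equivalent to the convergence of the partition function at $\beta=-1$ computed with respect to an $L^{2}$-orthonormal basis, i.e. to the (still open, only conditionally established) convergence of Conjecture \ref{conj:Fano with triv autom intr} and Theorem \ref{thm:equiv cond for conv}. Your proposal is exactly this reduction, spelled out via the change-of-basis/covolume identity and the normalization $F(-1)=0$ for the K\"ahler--Einstein reference data, and you correctly flag its conditional nature (modulo bookkeeping of the factor-of-two normalization in the Arakelov degree); so it takes essentially the same approach as the paper, with the understanding that it does not, and at present cannot, establish the conjecture unconditionally.
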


In fact, using the arithmetic Hilbert-Samuel theorem in \cite[Thm 1.4]{Zh0}
(generalizing the relative ample case in \cite{g-s}) this conjecture
is equivalent to the convergence of the partition function appearing
in Theorem \ref{thm:equiv cond for conv}, defined with respect to
any basis of $H^{0}(X,kK_{X})$ which is orthonormal with respect
to the Hermitian product induced by a Kähler metric on $X.$ Thus,
by Theorem \ref{thm:zero-free}, in order to establish the conjecture
it would, for example, be enough to show that the lifted partition
function $\widetilde{\mathcal{Z}}_{N_{k}}(\beta)$ may be expressed
as a product of $O(N_{k})$ shifted Gamma-functions all of whose poles
are located in the region where $\Re\beta<-1-\epsilon$ for some $\epsilon>0.$ 
\begin{rem}
Other \label{subsec:Extension-to-non-Archimedean} of (polarized)
arithmetic varieties on arithmetic varieties $\mathcal{X},$ endowed
with a relatively ample line bundle $\mathcal{L},$ are introduced
in \cite{bo2,zh} (which are finite precisely when $(X,k\mathcal{L})$
is Chow stable) and related to constant scalar curvature metrics in
\cite{o}.
\end{rem}

The analog of Conjecture \ref{conj:arithm} does hold when $-K_{X}$
is replaced by $K_{X}$ (assumed ample) and $\log\mathcal{Z}_{N_{k}}$
is replaced by the arithmetic invariant $-\log\mathcal{Z}_{N_{k}}$
(as follows from combining the convergence of $\mathcal{Z}_{N_{k}}(1)$
in Theorem \ref{thm:beta pos} with the arithmetic Hilbert-Samuel
theorem).

\subsection{Extension to non-Archimedean places }

In view of the connections to local L-functions $L_{p}$ at the (complex)
Archimedean place $p,$ exhibited in Section \ref{subsec:Zeta-integral-expressions for the standard},
one may wonder if the probabilistic setup can be extended to non-Archimedean
places $p?$ The case of the trivial place is discussed in \ref{subsec:Zeta-integral-expressions for the standard},
in connection to Gibbs stability. What follows are some speculations
on the case of non-trivial non-Archimedean places $p,$ inspired by
the adelic geometric setup in \cite{c-t} where geometric Igusa local
zeta functions are studied (see Section \ref{subsec:Archimedean-zeta-functions}).

Let $X$ be a non-singular variety defined over $\Q$ and first consider
the case when $K_{X(\Q)}$ is ample. Given a non-trivial non-archimedean
place $p$ (i.e a prime number) denote by $X(\Q_{p})$ the projective
variety over the corresponding $p-$adic local field $\Q_{p}$ (the
completion of $\Q$ with respect to $|\cdot|_{p}$), which comes with
the structure of a $\Q_{p}-$analytic manifold. By general principles,
any continuous metric on $K_{X(\Q_{p})}$ induces a measure on $X(\Q_{p})$,
which is absolutely continuous wrt the local Haar measures \cite[Section 2.1]{c-t}.
In particular, a section $s_{k}$ of $kK_{X(\Q_{p})}$ induces a measure
on $X(\Q_{p}),$ whose local density may be symbolically expressed
as $|s_{k}|_{p}^{1/k}$. \footnote{One can also consider a field extension $F_{p}$ of $\Q_{p}$ and
get a measure on the corresponding analytic manifolds $X(F_{p}),$
as in \cite{j-n}, but here $F_{p}=\Q_{p},$ for simplicity.} Hence, replacing the squared Archimedean absolute value appearing
in formula \ref{eq:canon prob measure intro} with $|\cdot|_{p}$
one arrives at a symmetric probability measure $\mu_{p}^{(N_{k})}$
on $X(\Q_{p})^{N_{k}}.$ This construction thus yields a canonical
random point process on $X(\Q_{p}).$ Accordingly, it seems natural
to ask if the convergence in Theorem \ref{thm:ke intro} can be extended
to this non-archimedean setup, if $dV_{KE}$ is replaced by an appropriate
measure $dV_{KE,p}$ on $X(\Q_{p})?$ In analogy with the archimedean
setup the measure $dV_{KE,p}$ should be characterized as the unique
minimizer of a free energy type functional $F_{1}$ on the space of
probability measure $\mu$ on $X(\Q_{p})$ of the form: 
\begin{equation}
F_{1}(\mu)=E(\mu)+\text{Ent}(\mu),\label{eq:free energy in p-adic}
\end{equation}
 where $\text{Ent}(\mu)$ denotes the entropy of the measure $\mu$
relative to a fixed measure on $X(\Q_{p}),$ absolutely continuous
wrt the local Haar measure and $E(\mu)$ is a non-archimedan analog
of the energy discussed in Section \ref{subsec:The-case beta pos}.
In particular, $dV_{KE,p}$ is then absolutely continuous wrt the
local Haar measure. 

Ideally, one might hope that the collection of metrics on $-K_{X(\Q_{p})}$
defined by $dV_{KE,p}$ - as $p$ ranges over all primes $p$ - is
induced by some model $(\mathscr{X},\mathscr{L})$ for $(X,K_{X(\Q)})$
over $\ensuremath{\Z},$ away from primes $p$ with bad reduction
(cf. \cite[Section 2.2.3]{c-t}). This would - loosely speaking -
yield a probabilistic construction of a ``canonical'' integral model
attached to $X(\Q).$ This is in line with the analogy between the
Kähler-Einstein condition of a metric on $X(\C)$ (i.e. at $p=\infty$)
and the minimality condition of an integral model for $X(\Q)$ put
forth in \cite{m} and further studied in \cite{o}. 
\begin{rem}
Embedding, $X(\Q_{p})$ in its Berkovich analytification $X_{p}^{an}$
and pushing forward a measure $\mu$ on $X(\Q_{p})$ to $X_{p}^{an}$
the functional on $C^{0}($$X_{p}^{an}$) defined as the Legendre-Fenchel
transform of the functional $E(\mu)$ in formula \ref{eq:free energy in p-adic}
should - in analogy to the archimedean setup \cite{bbgz,berm6} -
be given by the primitive of the non-Archimedean Monge-Ampère operator
introduced in \cite{k-t,ch}. The primitive in question is called
the ``energy functional'' in \cite{b-f-j}. In the case of a trivial
non-Archimedean absolute value such an energy $E(\mu)$ appears in
\cite[formula 6.1]{b-j0} and plays an important role in the non-Archimedean
approach to K-stability.
\end{rem}

Similar considerations apply in the Fano case. In particular, to a
given metric on $-K_{X(\Q_{p})}$ one can associate a lifted partition
function $\widetilde{\mathcal{Z}}_{N_{k},p}(\beta).$ By general principles
\cite[Section 4.1]{c-t}, this defines a meromorphic function on $\C$
which in the light of Section \ref{subsec:Zeta-integral-expressions for the standard}
plays the role of the local L-functions $L_{p}$ in the Langlands
program. More precisely, in order to render $\widetilde{\mathcal{Z}}_{N_{k},p}(\beta)$
as canonical as possible the metric on $-K_{X(\Q_{p})}$ should be
taken to be defined by a ``canonical'' integral model $(\mathscr{X},\mathscr{L})$
for $(X(\Q),-K_{(\Q)})$ and $\det S^{(k)}$ should be defined with
respect to any basis in $H^{0}(\mathscr{X},\mathscr{L})$ (as in Section
\ref{subsec:Invariants-of-arithmetical}). Finally, one could then
attempt to define a global L-type function as an Euler product of
$\widetilde{\mathcal{Z}}_{N_{k},p}(\beta)$ over all $p,$ generalizing
the Riemann zeta function. 

\section{\label{sec:Appendix:-log-canonical}Appendix: log canonical thresholds
and Archimedean zeta functions}

In this appendix we recall the basic notions of log canonical thresholds,
$\alpha-$invariants and their connections to Archimedean zeta functions,
which are as essentially well-known. We conclude with a proof of the
formula appearing in Example \ref{exa:P n}.

\subsection{Log canonical thresholds (lct)}

Let $X$ be a compact complex manifold.

\subsubsection{The lct of a divisor on $X$}

By definition an $\R-$divisor $D$ is a finite formal sum of irreducible
analytic subvarieties $D_{i}\subset X$ of complex codimension one:
\[
D=\sum_{i=1}^{m}c_{i}D_{i},\,\,\,c_{i}\in\R.
\]
 The\emph{ log canonical threshold} $\text{lct}_{X}(D)$ of an $\R-$divisor
$D$ has various algebro-geometric formulations (using discrepancies,
valuations, multiplier ideal sheaves,...) \cite{ko0}, but for the
purposes of the present paper it will be enough to recall its analytic
definition as an integrability threshold. First consider the case
when the coefficients $D$ are in $\Z_{+}.$ This equivalently means
that there exists a holomorphic line bundle $L_{D}\rightarrow X$
and a holomorphic section $s_{D}$ such that $D$ is cut-out by $s_{D},$
including multiplicities, i.e. $s_{D}$ vanishes to order $c_{i}$
along the irreducible varieties $D_{i}.$ The lct may then be defined
as the following integrability index:
\begin{equation}
\text{lct}_{X}(D):=\sup_{\gamma>0}\left\{ \gamma:\int_{X}\left\Vert s_{D}\right\Vert ^{-2\gamma}dV<\infty\right\} ,\label{eq:def of lct}
\end{equation}
 in terms of any Hermitian metric $\left\Vert \cdot\right\Vert $
on $L$ and volume form $dV$ on $X.$ This definition  first extends
to the case when $c_{i}\in\Z,$ if $s_{D}$ is viewed as a meromorphic
section, so that the negative coefficients correspond to the poles
of $s_{D},$ and then to $c_{i}\in\Q$ by viewing $s_{D}$ as a multi-valued
holomorphic section and noting that $\left\Vert s\right\Vert $ is
still a well-defined function on $X$ (taking values in $[0,\infty]$).
Finally, the definition extends, by continuity, to any $\R-$divisor
$D$ or, alternatively, by noting that the function $\left\Vert s_{D}\right\Vert $
is still well-defined (and can be viewed as the norm on an $\R-$line
bundle, i.e. a formal sum of the line bundles $L_{D_{i}}$).

\subsubsection{The lct of a divisor on $(X,\Delta)$}

More generally, if $\Delta$ is a given $\Q-$divisor of $X$ then
the log canonical threshold of $D$ relative to the\emph{ log pair
}$(X,\Delta)$ \cite{c-p-s} may be analytically defined as 
\[
\text{lct}_{(X,\Delta)}(D):=\sup_{\gamma>0}\left\{ \gamma:\,\int_{X}\left\Vert s\right\Vert ^{-2\gamma}dV_{\Delta}<\infty\right\} ,
\]
where $dV_{\Delta}$ is a measure on $X$ with singularities encoded
by $\Delta,$ i.e. locally $dV_{\Delta}$ may be expressed as 
\[
dV_{\Delta}=\left\Vert s_{\Delta}\right\Vert ^{-2}dV_{X}
\]
 for some bona fide volume form $dV_{X}$ on $X$ and metric $\left\Vert \cdot\right\Vert $
on the $\Q-$line bundle with multivalued holomorphic section $s_{\Delta}$
corresponding to $\Delta.$ More generally, as in the previous section
$\Delta$ may be taken to be an $\R-$divisor on $X.$

\subsubsection{The lct of a line bundle $L$ and the $\alpha-$invariant.}

The log canonical threshold $\text{lct}_{X}(L)$ of a line bundle
$L\rightarrow X$ is now defined by
\[
\text{lct}_{X}(L):=\inf_{D\sim L}\text{lct}_{X}(D),
\]
 where $D$ ranges over the divisors attached to all the many-valued
holomorphic section $s$ of $L.$ By \cite{dem} this coincides with
Tian's $\alpha-$invariant of $L:$ 
\begin{equation}
\alpha(L):=\sup_{\gamma>0}\left\{ \gamma:\,\exists C\,\int_{X}e^{-\gamma(\phi-\phi_{0})}dV\leq C\forall\phi\in\mathcal{H}(L)\right\} ,\label{eq:def of alpha of L}
\end{equation}
 where $\mathcal{H}(L)$ denotes the space of all metrics on $L$
with positive curvature and $\phi_{0}$ denotes a fixed smooth reference
metric on $L$ (using additive notation for metrics so that $\phi-\phi_{0}$
defines a function on $X.$ More generally, the log canonical threshold
$\text{lct}_{(X,\Delta)}(L)$ of a line bundle $L\rightarrow X$ wrt
a log pair $(X,\Delta)$ \cite{c-p-s} is defined by

\[
\text{lct}_{(X,\Delta)}(L):=\inf_{D\sim L}\text{lct}_{(X,\Delta)}(D).
\]
 This coincides with the $\alpha-$invariant defined wrt the log pair
$(X,\Delta)$ obtained by replacing $dV$ in formula \ref{eq:def of alpha of L}
with $dV_{(X,\Delta)},$ as shown the appendix of \cite{berm6}.

\subsection{\label{subsec:Archimedean-zeta-functions}Archimedean zeta functions}

Let $\mu_{0}$ be a measure on $\C^{n}$ with compact support and
$\psi\in L^{1}(\mu_{0}).$ Then we may define the integrability threshold
$\text{lct}_{\mu_{0}}(\psi)$ as in formula \ref{eq:def of lct},
by replacing $\log\left\Vert s\right\Vert ^{2}$ with $\psi$ and
$dV$ by $\mu_{0}.$ The integral
\[
Z(\beta)=\int_{\C^{n}}e^{2\beta\psi}\mu_{0},
\]
 defines a holomorphic function on the strip $\{\Re\beta>-\text{lct}_{\mu_{0}}(\psi)\}$
in $\C$ (using that, in this strip, $e^{\beta\psi}\in L^{1}(\mu_{0})$
and that the integrand is holomorphic in $\beta).$ In the case when
$\psi=\log|f|^{2}$ for $f$ holomorphic, or more precisely,
\begin{equation}
Z(\beta)=\int_{\C^{n}}|f|^{2\beta}\Phi dx,\label{eq:Z beta for f}
\end{equation}
for a Schwartz function $\Phi,$ the holomorphic function $Z(\beta)$
on the strip $\{\Re\beta>-\text{lct}_{\mu_{0}}(\psi)\}$ extends to
a meromorphic function in $\C,$ whose poles are located at the negative
real axes. 
\begin{rem}
\label{rem:igusa}This follows from classical results of Atiyah and
Bernstein, extended by Igusa to a more general setting of zeta function
attached to polynomials defined over local fields \cite{ig}. Briefly,
meromorphic functions $Z(\beta)$ of the form \ref{eq:Z beta for f}
can be defined more generally by replacing $\C$ and its standard
Archimedean absolute value $\left|\cdot\right|$ with any local field
$F,$ endowed with an absolute value $\left|\cdot\right|_{F}.$ Such
functions $Z(\beta)$ are usually called\emph{ Igusa local zeta function}
\cite{ig} and thus $Z(\beta)$ in formula \ref{eq:Z beta for f}
is called an Igusa Archimedean zeta function or simply an \emph{Archimedean
zeta function} in the literature on algebraic and arithmetic geometry.
In the case when $f$ is a polynomial with integer coefficents and
$F$ is the $p-$adic field, $F=\Q_{p},$ the meromorphic function
$Z(\beta)$ encodes the number of solutions of the equation $f(x_{1},...,x_{n})=0,$
modulo powers of $p,$ when $\Phi$ is taken as the characteristic
function of the $n-$fold product of the ring $\Z_{p}$ of integers
of $\Q_{p},$
\end{rem}

Similarly, given a holomorphic section $s$ of a line bundle $L\rightarrow X$
over a compact complex manifold, a metric $\left\Vert \cdot\right\Vert $
on $L$ and a singular volume form $dV_{\Delta}$ associated to a
log pair $(X,\Delta)$ 
\begin{equation}
\mathcal{Z}(\beta):=\int_{X}\left\Vert s\right\Vert ^{2\beta}dV_{(X,\Delta)}\label{eq:Z of beta as integral over s norm}
\end{equation}
 defines a holomorphic function in the strip $\{\Re\beta>-\text{lct}_{(X,\Delta)}(D)\}$
in $\C,$ where $D$ denotes the divisor cut out by the section $s.$
More precisely the function $\mathcal{Z}(\beta)$ extends to a meromorphic
function on $\C,$ whose poles are located on the negative real axes
(using a partition of unity to reduce to the case of $X=\C^{n}).$
The first negative pole is precisely $-\text{lct}_{(X,\Delta)}(D).$ 
\begin{rem}
Functions of the form \ref{eq:Z of beta as integral over s norm}
have previously appeared in a general adelic setup \cite{c-t} (containing
both the Archimedean and the $p$-adic setup), motivated by number
theory and arithmetic geometry on log Fano varieties.
\end{rem}

In the present probabilistic setup on Fano manifolds, discussed in
Section \ref{subsec:The-strong-zero-free}, the manifold is of the
form $X^{N_{k}}$$,$ the section is the many-valued holomorphic section
$(\det S^{(k)})^{1/k}$ of $-K_{X^{N_{k}}}$ and the measure is of
the form $dV_{X}^{\otimes N_{k}}$ (and similarly in the case of log
Fano pairs). We conclude by proving the explicit formula for $\mathcal{Z}(\beta)$
stated in Example \ref{exa:P n}.
\begin{prop}
\label{prop:formula in ex}In the setup of Example \ref{exa:P n}
the following formula holds 
\[
\mathcal{Z}(\beta)=c_{n}\frac{\prod_{j=1}^{n}\Gamma\left(\beta(n+1)+j\right)}{\left(\Gamma\left(\beta(n+1)+n+1\right)\right)^{n}}\cdot
\]
In particular, the maximal holomorphicity strip of $\mathcal{Z}(\beta)$
is given by $\Omega=\{\Re(\beta)>-1/(n+1)\}\Subset\C$ and $\mathcal{Z}(\beta)$
is zero-free in $\Omega.$ More precisely, the zeros of $\mathcal{Z}(\beta)$
are located at $\beta=-1+j/(n+1)$ where $j=0,1,2,....$
\end{prop}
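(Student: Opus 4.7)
The plan is to lift the projective integral $\mathcal{Z}(\beta)$ to a Gaussian integral over the algebra $\text{Mat}(n+1,\C)$ of $(n+1)\times(n+1)$ complex matrices, and then invoke the classical moment formula for $|\det M|^{2s}$ under the complex Ginibre ensemble. First I fix the standard basis $Z_{0},\ldots,Z_{n}$ of $H^{0}(\P^{n},\mathcal{O}(1))$ so that, for points $x_{j}=[w^{(j)}]$ with $w^{(j)}\in\C^{n+1}\setminus\{0\}$, the Slater determinant corresponds to $\det M(w)$, where $M(w)$ is the matrix with columns $w^{(j)}$, and the Fubini--Study norm reads
\[
\|\det S^{(k)}(x_{1},\ldots,x_{n+1})\|_{FS}^{2}=\frac{|\det M(w)|^{2}}{\prod_{j=1}^{n+1}\|w^{(j)}\|^{2}}.
\]
Since $k=1/(n+1)$, the exponent $2\beta/k$ equals $2\beta(n+1)$, and the integrand is $\C^{\times}$-invariant of homogeneity zero in each $w^{(j)}$.

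Next I lift to the total space. Writing $w^{(j)}=r_{j}u^{(j)}$ with $r_{j}\ge 0$ and $u^{(j)}\in S^{2n+1}$, and using the Hopf fibration $S^{2n+1}\to\P^{n}$, the radial variables decouple: each $r_{j}$-integral, combined with the Gaussian weight $e^{-r_{j}^{2}}$ and the Jacobian $r_{j}^{2n+1}$, produces a factor
\[
\int_{0}^{\infty}r^{2\beta(n+1)+2n+1}e^{-r^{2}}\,dr=\frac{1}{2}\Gamma(\beta(n+1)+n+1),
\]
so that
\[
J(\beta):=\int_{\text{Mat}(n+1,\C)}|\det M|^{2\beta(n+1)}e^{-\|M\|^{2}}\,dM=\gamma_{n}\,\Gamma(\beta(n+1)+n+1)^{n+1}\,\mathcal{Z}(\beta)
\]
for a computable positive constant $\gamma_{n}$ that encodes the ratio between the spherical measure on $(S^{2n+1})^{n+1}$ and the Fubini--Study measure on $(\P^{n})^{n+1}$.

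The matrix integral $J(\beta)$ is then evaluated by the classical moment formula for the complex Ginibre ensemble (equivalently, the determinantal moments of the complex Wishart distribution, provable via the singular value decomposition and Selberg's integral, or inductively via Gram--Schmidt):
\[
J(\beta)=\pi^{(n+1)^{2}}\prod_{j=1}^{n+1}\frac{\Gamma(\beta(n+1)+j)}{\Gamma(j)}.
\]
Dividing by the radial factor cancels one copy of $\Gamma(\beta(n+1)+n+1)$ from the product and yields
\[
\mathcal{Z}(\beta)=c_{n}\,\frac{\prod_{j=1}^{n}\Gamma(\beta(n+1)+j)}{\Gamma(\beta(n+1)+n+1)^{n}}.
\]

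Finally, the holomorphicity and zero-free statements follow from the classical fact that $\Gamma(z)$ is zero-free and its only (simple) poles lie at $z\in\{0,-1,-2,\ldots\}$. In the strip $\Re\beta>-1/(n+1)$, each factor $\Gamma(\beta(n+1)+j)$ for $j\in\{1,\ldots,n\}$ is holomorphic, and $\Gamma(\beta(n+1)+n+1)$ is both holomorphic and nonvanishing; the first obstruction to holomorphicity is the simple pole of $\Gamma(\beta(n+1)+1)$ at $\beta=-1/(n+1)$, pinning the maximal strip as $\Omega$, and the zero-freeness of all Gamma factors then gives $\mathcal{Z}(\beta)\neq 0$ throughout $\Omega$. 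The main technical point is the second step, namely the careful bookkeeping of the normalization constant relating the $(S^{2n+1})^{n+1}$ spherical integral to the Fubini--Study integral on $(\P^{n})^{n+1}$; once that is in place, the proposition reduces to the two classical inputs quoted above.
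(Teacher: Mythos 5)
Your proposal is correct and follows essentially the same route as the paper's proof: identify $\det S^{(k)}$ with the matrix determinant, lift $\mathcal{Z}(\beta)$ via the spheres (Hopf fibration) to a Gaussian integral over $\mathrm{Mat}(n+1,\C)$ with each column's radial integral contributing $\tfrac{1}{2}\Gamma\left(\beta(n+1)+n+1\right)$, and then evaluate the Gaussian determinant moment. The only difference is the reference for that last input — the paper invokes Igusa's formula (Thm 6.3.1 of \cite{ig}, via the Bernstein--Sato identity for $\det$), while you invoke the equivalent classical Ginibre/Wishart moment formula — after which the Gamma-factor bookkeeping and the holomorphicity/zero-free discussion coincide with the paper's.
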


\begin{proof}
In this ``minimal'' case a basis $s_{1},...,s_{N_{k}}$ in the complex
vector space $H^{0}(X,-kK_{X})=H^{0}(\P^{n},\mathcal{O}(1))$ is obtained
from the homogeneous coordinates $Z_{0},...,Z_{n}$ on $\P^{n}.$
Denote by $\boldsymbol{Z}:=(Z_{0},...,Z_{n})$ the corresponding vector
in $\C^{n+1}.$ We will represent an element in $(\boldsymbol{Z}_{1},....,\boldsymbol{Z}_{N})\in(\C^{n+1})^{N}$
with an $(n+1)\times N-$matrix, denoted by $[\boldsymbol{Z}].$ Then
the corresponding Slater determinant $\det S^{(k)}$ may be identified
with the homogeneous polynomial $\det[\boldsymbol{Z}]$ on $\C^{(n+1)^{2}},$
defined by the determinant of the matrix $[\boldsymbol{Z}].$ Using
the $SU(n+1)-$symmetry of the Fubini-Study metric on $\mathcal{O}(1)\rightarrow\P^{n}$
we may then first lift the integral $Z(\beta)$ on $(\P^{n})^{n+1}$
to the product of unit-spheres $S$ in $\C^{n+1}:$
\[
\mathcal{Z}(\beta)=c_{n}\int_{S^{(n+1)}}\left|\det[\boldsymbol{Z}]\right|^{2s}d\sigma^{\otimes N},\,\,\,s:=\beta/k
\]
where $d\sigma$ denotes the standard $SU(n+1)-$invariant measure
on $S.$ Next, exploiting that $\det[\boldsymbol{Z}]$ is homogeneous
of degree $1$ in each column, gives 
\[
\int_{S^{(n+1)}}\left|\det[\boldsymbol{Z}]\right|^{2s}d\sigma^{\otimes N}=c_{n}\frac{\int_{\C^{(n+1)^{2}}}\left|\det[\boldsymbol{Z}]\right|^{2s}e^{-|\boldsymbol{Z}|^{2}}d\lambda}{\left(\int_{0}^{\infty}(r^{2})^{s}e^{-r^{2}}r^{2(n+1)-1}dr\right)^{n+1}}.
\]
 Hence, making the change of variables $t=r^{2}$ in the denominator
(and rewriting $r^{2(n+1)-1}dr=r^{2(n+1)}r^{-2}d(r^{2})/2)$ reveals
that
\begin{equation}
\mathcal{Z}(\beta)=c_{n}\frac{\int_{\C^{(n+1)^{2}}}\left|\det[\boldsymbol{Z}]\right|^{2s}e^{-|\boldsymbol{Z}|^{2}}d\lambda}{\left(\Gamma\left(s+n+1\right)\right)^{(n+1)}},\,\,\,\Gamma(a):=\int_{0}^{\infty}t^{a}e^{-t}\frac{dt}{t}.\label{eq:Z beta as matrix integral in appendix}
\end{equation}
 Finally, the proof is concluded by invoking the following formula
in \cite[Thm 6.3.1]{ig}:
\begin{equation}
Z(s):=\int_{\C^{(n+1)^{2}}}\left|\det[\boldsymbol{Z}]\right|^{2s}e^{-|\boldsymbol{Z}|^{2}}d\lambda=c_{n}\prod_{j=1}^{n+1}\Gamma\left(s+j\right).\label{eq:form Z of s}
\end{equation}
\end{proof}
\begin{rem}
\label{rem:bernstein}The proof of formula \ref{eq:form Z of s} in
\cite{ig} exploits that the polynomial $f:=\det[\boldsymbol{Z}]$
on $\C^{(n+1)^{2}}$ has the property that 
\begin{equation}
P(\partial)f^{s+1}=b(s)f^{s}\label{eq:bernstein}
\end{equation}
with $b(s)=\prod_{j=1}^{n+1}(s+j),$ when $P(z)=f(z).$ This leads
to the functional relation $b(s)Z(s)=Z(s+1),$ that can then be compared
with the classical functional relation for $\Gamma(s)$ to deduce
formula \ref{eq:form Z of s}. Recall that in general, given a polynomial
$f(z)$ on $\C^{m},$ the monic polynomial $b(s)$ on $\C$ with minimal
degree for which there exists a polynomial $P(z)$ satisfying formula
\ref{eq:bernstein} is called the \emph{Bernstein-Sato polynomial
}attached to $f$ \cite{ig}. In general, it is very hard to compute
$b(s)$ explicitly (and thus to also to find $P(z))$ but the present
case, $f(z)=\det[\boldsymbol{Z}],$ fits into Sato's theory of prehomogenuous
vector spaces. This is explained in \cite{ig}. Alternatively, formula
\ref{eq:form Z of s} follows from the Iwasawa decomposition of $GL(N,\C)$
(as in \cite[Section 2]{ish}). It would be interesting to see if
similar considerations could be applied to $X=\P^{n}$ when $N_{k}$
is not assumed to be minimal, i.e. when $N_{k}>n+1.$ However, even
the case when $n=1$ appears to be open (apart from the case when
$N_{k}=3$ appearing in formula \ref{eq:Z N when N is three}, where
a symmetry argument can be exploited).
\end{rem}


\begin{thebibliography}{10}
\bibitem{a-g-b--}Alvarez-Gaumé, L., Bost, J.-B., Moore, G., Nelson,
P., Vafa, C.: Bosonization on higher genus Riemann surfaces. Commun.
Math. Phys. 112(3), 503--552 (1987)

\bibitem{aa}Aomoto, K. On the complex Selberg integral. Quart. J.
Math. Oxford Ser. (2) 38 (1987), no. 152, 385--399.

\bibitem{au}Aubin, T: Equations du type Monge-Amp`ere sur les vari\'{ }et\'{ }es
K\"{ } ahl\'{ }eriennes compactes. Bull. Sci. Math. (2) 102 (1978),
no. 1, 63--95

\bibitem{b-m}S. Bando, T. Mabuchi: Uniqueness of Einstein K\"{ }ahler
metrics modulo connected group actions. in Algebraic geometry, Sendai,
1985(T. Oda, Ed.), Adv. Stud. Pure Math.10, Kinokuniya,1987, 11--40

\bibitem{b-r}J.Bernstein; A.Reznikov: Estimates of automorphic functions.
Moscow Math. J.4(2004), 19 - 37.

\bibitem{begz}Boucksom, S; Essidieux,P: Guedj,V; Zeriahi: Monge-Ampere
equations in big cohomology classes. Acta Math. 205 (2010), no. 2,
199--262.

\bibitem{berm 1 komma 5}Berman, R.J: Determinantal point processes
and fermions on complex manifolds: Large deviations and Bosonization.
Comm. in Math. Phys. Volume 327, Issue 1 (2014), Page 1-47

\bibitem{berm5}Berman, R.J: Kahler-Einstein metrics emerging from
free fermions and statistical mechanics. 22 pages, J. of High Energy
Phys. (JHEP), Volume 2011, Issue 10 (2011)

\bibitem{berm6}Berman, R.J: A thermodynamical formalism for Monge-Ampere
equations, Moser-Trudinger inequalities and Kahler-Einstein metrics\emph{.}
Advances in Math. 1254. Volume: 248. 2013

\bibitem{berman6ii}Berman, R.J: K-polystability of Q-Fano varieties
admitting Kahler-Einstein metrics\emph{.} Inventiones Math. March
2016, Volume 203, Issue 3, pp 973-1025

\bibitem{berm8}Berman, R.J: Large deviations for Gibbs measures with
singular Hamiltonians and emergence of Kähler-Einstein metrics. Communications
in Mathematical Physics. Volume 354, Issue 3, pp 1133--1172 (2017)

\bibitem{berm10b}R.J. Berman: Statistical Mechanics of Interpolation
Nodes, Pluripotential theory and Complex Geometry. Annales Polonici
Mathematici 123 (2019), 71-153 

\bibitem{berm8 comma 5} Berman, R.J: Kähler-Einstein metrics, canonical
random point processes and birational geometry.\emph{ }Proceedings
of Symposia in Pure Mathematics. Volume 97.1 : Algebraic Geometry
Salt Lake City 2015 (Part 1). pp 29-74 

\bibitem{berm10}Berman, R.J: On large deviations for Gibbs measures,
mean energy and Gamma convergence. Constructive Approximation. Volume
48, Issue 1 (2018) pp 3--30

\bibitem{berm11}Berman, R.J: An invitation to K\"{ }ahler-Einstein
metrics and random point processes. Surveys in Differential Geometry
Volume 23 (2018) Pages: 35 -- 87

\bibitem{berm11b}R.J. Berman: Statistical Mechanics of Interpolation
Nodes, Pluripotential theory and Complex Geometry. Annales Polonici
Mathematici 123 (2019), 71-153 

\bibitem{ber12}Berman, R.J: Emergent complex geometry. Preprint (to
appear in the Proceedings of the ICM 2022).

\bibitem{ber13}Berman, R.J: The probabilistic vs the quantization
approach to Kähler-Einstein geometry. Preprint in 2021 at arXiv: 2109.06575 

\bibitem{ber14}Berman, R.J: Measure preserving holomorphic vector
fields, invariant anti-canonical divisors and Gibbs stability. arXiv
2201.03325 (2022) 

\bibitem{b-b}Berman, R.J.; Boucksom, S: Growth of balls of holomorphic
sections and energy at equilibrium. Invent. Math. Vol. 181, Issue
2 (2010), p. 337

\bibitem{b-b-w}Berman, R.J.; Boucksom, S; Witt Nyström, D: Fekete
points and convergence towards equilibrium measures on complex manifolds.
Acta Math. Vol. 207, Issue 1 (2011), 1-27

\bibitem{bbgz}Berman, R.J; Boucksom, S; Guedj,V; Zeriahi: A variational
approach to complex Monge-Ampere equations. Publications math. de
l'IHÉS (2012): 1-67 , November 14, 2012

\bibitem{bbegz}Berman, R.J\emph{; }Eyssidieu, P: S. Boucksom, V.
Guedj, A. Zeriahi: Kähler-Einstein metrics and the Kähler-Ricci flow
on log Fano varieties. Journal fur die Reine und Angewandte Mathematik
(published on-line 2016). 

\bibitem{bbj} Berman, R.J; Boucksom, S; Jonsson, M: A variational
approach to the Yau-Tian-Donaldson conjecture. J. of the American
Math. Soc. (to appear). arXiv:1509.04561 

\bibitem{b-c-p}Berman, R.J; Collins, T; Persson, D: The AdS/CFT correspondence
and emergent Sasaki-Einstein metrics. Nature Communications 13, Article
number: 365 (2022)

\bibitem{bo2}Bost, J-B:Intrinsic heights of stable varieties and
abelian varieties. Duke Math. J. 82 (1996), no. 1, 21--70. 

\bibitem{b-f-j}S. Boucksom, C. Favre and M. Jonsson: Solution to
a non-Archimedean Monge--Amp`ere equation. J. Amer. Math. Soc. 28
(2015), 617--667

\bibitem{b-j0}S. Boucksom; M. Jonsson: Global pluripotential theory
over a trivially valued field. arXiv: 1801.08229, 2018

\bibitem{b-g-k}C. P. Boyer, K.Galicki; J.Kollar: Einstein metrics
on spheres,Ann. of Math. (2)162(2005), no. 1, 557--58

\bibitem{clmp}Caglioti.E; Lions, P-L; Marchioro.C; Pulvirenti.M:
A special class of stationary flows for two-dimensional Euler equations:
a statistical mechanics description. Communications in Mathematical
Physics (1992) Volume 143, Number 3, 501-525

\bibitem{ch}A. Chambert-Loir: Mesures et équidistribution sur des
espaces de Berkovich. J. reine angew. Math. 595 (2006), p. 215--235, 

\bibitem{c-t}A Chambert-Loir, Y Tschinkel: Igusa integrals and volume
asymptotics in analytic and adelic geometry. Confluentes Mathematici,
2010 - World Scientific

\bibitem{c-p-s}I Cheltsov, J Park, C Shramov: Exceptional del Pezzo
hypersurfaces. Journal of Geometric Analysis, 2010 - Springer

\bibitem{c-d-s}X.X Chen, S. Donaldson, S. Sun. K\textasciidieresis ahler-Einstein
metrics on Fano manifolds, I, II, III. J. Amer. Math. Soc. 28 (2015).

\bibitem{d-r}T. Darvas, Y. Rubinstein.Tian\textquoteright s properness
conjectures and Finsler geometry of the space of Kahler metrics. J.
Amer. Math. Soc.30(2017), 347--387

\bibitem{d-z}Dembo, A; Zeitouni O: Large deviation techniques and
applications. Jones and Bartlett Publ. 1993

\bibitem{d-k}Demailly, J-P; Kollar, J: Semi-continuity of complex
singularity exponents and Kähler- Einstein metrics on Fano orbifolds.
Ann. Sci. École Norm. Sup. (4) 34 (2001), no. 4, 525--556

\bibitem{dem}{]}J.-P. Demailly, Appendix to I. Cheltsov and C. Shramov\textquoteright s
article \textquotedblleft Log canonical thresholds of smooth Fano
three-folds\textquotedblright : On Tian\textquoteright s invariant
and log canonical thresholds, Uspekhi Mat. Nauk 63 (5(383)) (2008)
73--180.

\bibitem{do1}Donaldson, S. K: Scalar curvature and projective embeddings.
I. J. Differential Geom. 59 (2001), no. 3, 479--522

\bibitem{do2}S.K. Donaldson Kähler metrics with cone singularities
along a divisor. Essays in mathematics and its applications, Springer,
Heidelberg, 2012, pp. 49--79, 

\bibitem{do3}Donaldson, S.K: Stability of algebraic varieties and
Kähler geometry. Algebraic geometry: Salt Lake City 2015, 199--221,
Proc. Sympos. Pure Math., 97.1, Amer. Math. Soc., Providence, RI,
2018.

\bibitem{d-f}V. S. Dotsenko and V. A. Fateev: Four-point correlation
functions and the operator algebra in 2D conformal invariant theories
with central charge C\ensuremath{\le}1. Nucl. Phys.B 251(1985)

\bibitem{du}Dujardin, R: Theorie globale de pluripotentiel, equidistributions
et processes ponctuels {[}d\textquoteright après Berman, Boucksom,
Witt Nyström,. . .{]}. Séminaire Bourbaki 2018--2019, no. 1153. http://www.bourbaki.ens.fr/TEXTES/Exp1153-Dujardin.pdf

\bibitem{egz}Eyssidieux, P., Guedj, V., Zeriahi, A., Singular K\"{ }ahl
er-Einstein metrics. J. Amer. Math. Soc. 22 (2009) no. 3, 607--639.

\bibitem{er1}Erememko, A: Metrics of positive curvature with conic
singularities on the sphere. Proc. Amer. Math. Soc. 132 (2004), no.
11, 3349--3355.

\bibitem{fu-zhu}Zenan Fu; Yongchang Zhu: Selberg integral over local
fields. Forum Math. 2019; 31(5): 1085--1095

\bibitem{f-w}P.J. Forrester; S.O. Warnaar: The importance of the
Selberg integral,Bull. Amer. Math.Soc. (N.S.)45(2008) 489

\bibitem{fu}Fujita,K: On Berman--Gibbs stability and K-stability
of Q -Fano varieties. Compositio Mathematica , Volume 152 , Issue
2 , February 2016 , pp. 288 - 298 

\bibitem{f-o}Fujita, Kento.J; Odaka, Y: On the K-stability of Fano
varieties and anticanonical divisors. Tohoku Math. J. (2) 70 (2018),
no. 4, 511--521. 

\bibitem{fu2}Fujita, K: A valuative criterion for uniform K-stability
of Q-Fano varieties. J. Reine Angew. Math. 751 (2019), 309--338.

\bibitem{g-m-s-y}Jerome P. Gauntlett, Dario Martelli, James Sparks,
Shing-Tung Yau: Obstructions to the Existence of Sasaki--Einstein
Metrics. Comm. Math. Phys.273(2007), 803--827

\bibitem{j-n}M. Jonsson ; J. Nicaise: Convergence of p-adic pluricanonical
measures to Lebesgue measures on skeleta in Berkovich spaces: J. de
l\textquoteright École polytechnique --- Mathématiques, Tome 7 (2020)
, pp. 287-336. 

\bibitem{g-s}H. Gillet; C. Soule: An arithmetic Riemann·Roch theorem,
Invent. Math. 110 (1992), 473-543

\bibitem{g-j}R. Godement and H. Jacquet.Zeta functions of simple
algebras. Lecture Notes in Mathematics,Vol. 260. Springer-Verlag,
Berlin-New York, 1972.

\bibitem{g-p}Guenancia, H; P\u{a}un, M: Conic singularities metrics
with prescribed Ricci curvature: general cone angles along normal
crossing divisors. (English summary) J. Differential Geom. 103 (2016),
no. 1, 15--57

\bibitem{f}Fisher, M. E.: The nature of critical points. in Lecture
Notes in Theoretical Physics, edited by Brittin, W. E. (University
of Colorado Press, 1965), Vol. 7c, pp. 1--159

\bibitem{ig}Igusa, J: An introduction to the theory of local zeta
functions. AMS/IP Studies in Advanced Mathematics, 14. American Mathematical
Society, Providence, RI; International Press, Cambridge, MA, 2000.
xii+232 pp. ISBN: 0-8218-2015-X 

\bibitem{ish}T.Ishii: Godement--Jacquet Integrals on GL(n,C). The
Ramanujan Journal49:1 (2019), 129--139

\bibitem{jmr}Jeffres, T; Mazzeo, R; Rubinstein, Y.A:; Kähler-Einstein
metrics with edge singularities. Ann. of Math. (2) 183 (2016), no.
1, 95--176.

\bibitem{p-s-s-w}D. H. Phong, Jian Song, Jacob Sturm, and Xiaowei
Wang: The Ricci flow on the sphere with marked points. J. Differential
Geom. Volume 114, Number 1 (2020), 117-170.

\bibitem{k}Kiessling M.K.H.: Statistical mechanics of classical particles
with logarithmic interactions. Comm. Pure Appl. Math. 46 (1993), 27-56.

\bibitem{ki}Alexander Kirillov Jr; Quiver Representations and Quiver
Varieties. AMS. 2016, Graduate studies in Math. Vol 174

\bibitem{ko0}Kollar, J: Singularities of pairs. Algebraic geometry---Santa
Cruz 1995, 221--287.

\bibitem{ko}J. Kollár, The structure of algebraic varieties, Proceedings
of ICM, Seoul, 2014, Vol. I., Kyung Moon SA, http://www.icm2014.org/en/vod/proceedings.html,
2014, pp. 395--420. 

\bibitem{kn}A. W. Knapp, Local Langlands correspondence: the Archimedean
case, in Motives(Seattle, WA, 1991), Proc.Sympos.Pure Math.55, Amer.
Math.Soc., Providence, RI, 1994, pp. 393--410

\bibitem{k-f}D.A. Kurtze; M.E. Fisher, Phys: Yang-Lee edge singularities
at high temperatures. Rev. B20 (1979) 2785

\bibitem{K-R-V}A Kupiainen, R Rhodes, V Vargas: Integrability of
Liouville theory: proof of the DOZZ Formula. Annals of Mathematics,
2020 

\bibitem{la1}Langlands, R. P.: Problems in the theory of automorphic
forms. Lectures in modern analysis and applications, III, Lecture
Notes in Math, 170, Berlin, New York: Springer-Verlag, pp. 18--6.
(1970)

\bibitem{la2}Langlands, R.P. L-functions and automorphic representations.
Proceedings of the International Congress of Mathematicians (Helsinki,
1978) ,1, Helsinki: Academia Scientiarum Fennica, pp. 165--175 (1980).

\bibitem{li1}Li, C: G-uniform stability and Kähler-Einstein metrics
on Fano varieties. ArXiv: 1907.09399, 2019

\bibitem{ltw}C. Li, G. Tian and F. Wang. The uniform version of Yau-Tian-Donaldson
conjecture for singular Fano varieties. arXiv:1903.01215

\bibitem{l-x-z}Y Liu, C Xu, Z Zhuang: Finite generation for valuations
computing stability thresholds and applications to K-stability.

\bibitem{m-r}R.Mazzeo; Y.A.Rubinstein: The Ricci continuity method
for the complex Monge--Ampère equation, with applications to Kähler--Einstein
edge metrics. Comptes Rendus Mathematique Volume 350, Issues 13--14,
July 2012, Pages 693-697

\bibitem{o}Odaka, J: Canonical Kähler metrics and arithmetics: Generalizing
Faltings heights. Kyoto J. Math. 58(2): 243-288

\bibitem{k-m-m-w}S. Klevtsov, X. Ma, G. Marinescu, and P. Wiegmann,
Quantum Hall effect and Quillen metric, Comm. Math. Phys. 349 (2017),
no. 3, 819--855

\bibitem{k-t}M. Kontsevich and Y. Tschinkel. Non-Archimedean Kähler
geometry. (2001). Unpublished.

\bibitem{li-sc}Li, P; Schoen, R: Lp and mean value properties of
subharmonic functions on Riemannian manifolds. Acta Math. 153 (1984),
no. 3-4, 279--301. 

\bibitem{l-t}Luo, F. and Tian, G.Liouville equation and spherical
convex polytopes, Proc. Amer. Math. Soc. 116(1992), no. 4, 1119--1129

\bibitem{m}Y. Manin, New dimensions in geometry, Workshop at Bonn
1984 (Bonn, 1984), 59-101. Lecture Notes in Mathematics vol. 1111,
Springer (1985)

\bibitem{ru}D. Ruelle,Statistical mechanics. Rigorous results, Reprint
of the 1989 edition (World Scientific Pub-lishing Co., Inc., River
Edge, NJ; Imperial College Press, London, 1999).

\bibitem{sh}V. Shokurov: Complements on surfacesJournal of Mathematical
Sciences102(2000), 3876--3932

\bibitem{so}Soulé, G, Abramovich, D; Burnol; JF, Kramer: Lectures
on Arakelov geometry. CamebrigeUniv. Press (1992)

\bibitem{ta}Tate, J. T: Fourier analysis in number fields, and Hecke's
zeta-functions (thesis from 1950). Algebraic Number Theory (Proc.
Instructional Conf., Brighton, 1965.

\bibitem{ti}Tian, G: K\"{ }ahler-Einstein metrics with positive scalar
curvature, Invent. Math.130(1997),no. 1, 1--37

\bibitem{to}Tolands, J: http://www.dma.unina.it/hamiltonianPDE/mate/tolandCapri.pdf.
Buffoni, B; Toland, J: Analytic theory of global bifurcation. An introduction.
Princeton Series in Applied Mathematics. Princeton University Press,
Princeton, NJ, 2003.

\bibitem{tr}Troyanov, M.Prescribing curvature on compact surfaces
with conical singularities, Trans. Amer.Math. Soc. 324 (1991), no.
2, 793--821

\bibitem{v-s}B Van Binh, V Schechtman: Invariant functionals and
Zamolodchikovs' integral. Functional Analysis and Its Applications,
2015 - Springer

\bibitem{varg}Vargas, V: Lecture notes on Liouville theory and the
DOZZ formula. https://arxiv.org/abs/1712.00829

\bibitem{y-l}Yang, C. N.; Lee, T. D. (1952), Statistical Theory of
Equations of State and Phase Transitions. I. Theory of Condensation.
Physical Review, 87 (3): 404--409, Lee, T. D.; Yang, C. N. (1952).
Statistical Theory of Equations of State and Phase Transitions. II.
Lattice Gas and Ising Model\textquotedbl , Physical Review, 87 (3):
410--419.

\bibitem{y}Yau, S-T: On the Ricci curvature of a compact Kähler manifold
and the complex Monge-Ampère equation. I. Comm. Pure Appl. Math. 31
(1978), no. 3, 339--411

\bibitem{yo}Yoshida, M: Fuchsian differential equations. With special
emphasis on the Gauss-Schwarz theory. Aspects of Mathematics, E11.
Friedr. Vieweg \& Sohn, Braunschweig, 1987. xiv+215 pp.

\bibitem{Zh0}Zhang, S: Positive line bundles on arithmetic varieties.
J. Amer. Math. Soc. 8 (1995), 187-221 

\bibitem{zh}Zhang, S: Heights and reductions of semi-stable varieties.
Compositio Math. 104 (1996), no. 1, 77--105. 
\end{thebibliography}
\end{document}